    \newcommand{\BA}{{\mathbb {A}}} 
    \newcommand{\BC}{{\mathbb {C}}} 
    \newcommand{\BG}{{\mathbb {G}}}
    \newcommand{\BQ}{{\mathbb {Q}}}
     \newcommand{\BZ}{{\mathbb {Z}}}
    \newcommand{\CA}{{\mathcal {A}}} 
     \renewcommand{\CD}{{\mathcal {D}}}
    \newcommand{\CE}{{\mathcal {E}}} \newcommand{\CF}{{\mathcal {F}}}
    \newcommand{\CO}{{\mathcal {O}}}
    \newcommand{\RG}{{\mathrm {G}}}
    \newcommand{\fu}{{\mathfrak{u}}}
     \newcommand{\GL}{{\mathrm{GL}}}
    \newcommand{\Ind}{{\mathrm{Ind}}}\newcommand{\ind}{{\mathrm{ind}}}
    \newcommand{\PGL}{{\mathrm{PGL}}}
    \renewcommand{\Re}{{\mathrm{Re}}}
    \newcommand{\Res}{{\mathrm{Res}}}
    \newcommand{\SO}{{\mathrm{SO}}}\newcommand{\Sp}{{\mathrm{Sp}}}
    \newcommand{\Sym}{{\mathrm{Sym}}}
    \newcommand{\Stab}{{\mathrm{Stab}}}
    \newcommand{\Span}{{\mathrm{Span}}}
    \newcommand{\diag}{\mathrm{diag}} 
    \newcommand{\Mat}{\mathrm{Mat}}
     \newcommand{\wt}{\widetilde}
    \newcommand{\pair}[1]{\langle {#1} \rangle}
    \newcommand{\lra}{\longrightarrow}
    \newcommand{\bs}{\backslash}
    \theoremstyle{plain}
    \newtheorem{thm}{Theorem}[section]  \newtheorem{ass}[thm]{Assumption} 
    \newtheorem{lem}[thm]{Lemma}  \newtheorem{prop}[thm]{Proposition} \newtheorem{rmk}[thm]{Remark}
      \newtheorem{defn-prop}[thm]{Definition-Proposition}
    \numberwithin{equation}{section}
\begin{document}

 \title[reciprocal branching problem and Vogan packets]
 {A reciprocal branching problem for automorphic representations and global Vogan packets}


\author{Dihua Jiang}
\address{School of Mathematics, University of Minnesota, Minneapolis, MN 55455, USA}
\email{dhjiang@math.umn.edu}

\author{Baiying Liu}
\address{Department of Mathematics, Purdue University, 150 N. University St, West Lafayette, IN, 47907, USA}
\email{liu2053@purdue.edu}

\author{Bin Xu}
\address{School of Mathematics, Sichuan University, No. 29 Wangjiang Road, Chengdu 610064, P. R. China}
\email{binxu@scu.edu.cn}

\begin{abstract}
Let $G$ be a group and $H$ be a subgroup of $G$. The classical branching rule (or symmetry breaking) asks: For an irreducible representation $\pi$ of $G$,
determine the occurrence of an irreducible representation $\sigma$ of $H$ in the restriction of $\pi$ to $H$. The reciprocal branching problem of this classical branching problem
is to ask: For an irreducible representation $\sigma$ of $H$, find an irreducible representation $\pi$ of $G$ such that $\sigma$ occurs in the restriction
of $\pi$ to $H$. For automorphic representations of classical  groups, the branching problem has been addressed by the well-known global Gan-Gross-Prasad conjecture. 
In this paper, we investigate the reciprocal branching problem for automorphic representations 
of special orthogonal groups using the twisted automorphic descent method as developed in \cite{JZ15}. The method may be applied to 
other classical groups as well. 
\end{abstract}

\subjclass[2010]{Primary 11F70, 22E55; Secondary 11F30.}
\keywords{Twisted Automorphic Descent, Global Gan-Gross-Prasad Conjecture, Branching Problem, Orthogonal Groups}%
\thanks{The first named author is partially supported by NSF grant DMS-1600685.
The second named author is partially supported by NSF grant DMS-1702218 and by  start-up funds from the Department of Mathematics at Purdue University. The third named author is partially supported by NSFC grant No.11501382 and by the Fundamental Research Funds for the Central Universities}

\maketitle


\section{Introduction}\label{intro}

The classical {\sl branching rule} or the so called {\sl symmetry breaking} in representation theory is to ask a question
that can be formulated in an over-simplified way as follows:
Let $G$ be a group and $H$ be a subgroup. For an irreducible representation $\pi$ of $G$, the problem is to ask which
irreducible representation $\sigma$ of $H$ occurs in the restriction of $\pi$ to $H$. A refinement of this classical problem is to ask if an irreducible
representation $\sigma$ of $H$ with extra condition can appear in the decomposition when $\pi$ is restricted to $H$.
Such a classical problem has been successfully studied in many different contents.
For automorphic representations of classical groups, the global Gan-Gross-Prasad conjecture addresses this classical problem in a certain format, which will be described with some details below.

The objective of this paper is to consider the automorphic version of the problem reciprocal to the refined classical branching rule problem.
The reciprocal branching problem could be formulated as follows: For an irreducible representation $\sigma$ of $H$, with a certain extra property, find an irreducible representation $\pi$ (possibly with a certain extra property) of $G$, which contains $H$ as a subgroup,
such that $\sigma$ occurs in the restriction of $\pi$ to $H$. It is clear that without those extra conditions on $\pi$ or $\sigma$, the usual
Frobenius reciprocity law suggests that one may take $\pi$ to be the induced representation from $\sigma$. However, with those extra conditions,
the usual induced representation may not be enough for such refined problems.
This paper is to understand this reciprocal problem in the theory of automorphic
representations of special orthogonal groups in terms of global Vogan packets and with connection to the global Gan-Gross-Prasad conjecture. We believe that the method used in this paper should be applicable to other classical groups as well.

\subsection{The branching problem and the global Gan-Gross-Prasad conjecture}
Before introducing the precise problem we will consider, we recall from \cite{GGP12} the global Gan-Gross-Prasad conjecture for the case closely relevant to the topics discussed in this paper.

Let $F$ be a number field and $\BA=\BA_F$ be the ring of adeles of $F$.
Following \cite{A}, we denote by $G_n^*:=\SO_{2n}^{\epsilon}$ an $F$-quasi-split special even orthogonal group, and denote by $H_m^*:=\SO_{2m+1}^*$
the $F$-split special odd orthogonal group. 
Here $\SO_{2n}^\epsilon$ ($\epsilon\in F^\times/(F^\times)^2$) is the $F$-quasi-split special orthogonal group determined by $(n-1)$ hyperbolic planes and $E_\epsilon=F[X]/(X^2-\epsilon)$, and is $F$-split if $\epsilon\in (F^\times)^2$. 
As in \cite{A} and \cite{GGP12}, we denote by $G_n$ and $H_m$ a pure inner form of $G_n^*$ and
$H_m^*$, respectively. Note that $G_n$ and $G_n^*$ share the same Langlands $L$-group, and so do $H_m$ and $H_m^*$. We recall the global Arthur
parameters from \cite{A}, and consider the generic ones mostly. A generic global Arthur parameter for $G_n^*$ is given as a formal sum
\begin{equation}\label{ap1}
\phi=(\tau_1,1)\boxplus\cdots\boxplus(\tau_r,1),
\end{equation}
where $\tau_i$ with $i=1,2,\cdots,r$ is an irreducible unitary, self-dual cuspidal automorphic representation of $\GL_{a_i}(\BA)$,
and $\tau_i\not\cong\tau_j$
if $i\neq j$. Moreover, each $\tau_i$ is of orthogonal type in the sense that the symmetric square $L$-function $L(s,\tau_i,\Sym^2)$ has a pole at $s=1$.
Note that one must have $2n=\sum_{i=1}^ra_i$. The global Arthur parameter $\phi$ as in \eqref{ap1} can be realized as an irreducible
automorphic representation of $\GL_{2n}(\BA)$. The set of generic global Arthur parameters of $G_n^*$ is denoted by $\wt{\Phi}_2(G_n^*)$. For each
Arthur parameter $\phi\in\wt{\Phi}_2(G_n^*)$, the associated global Arthur packet is denoted by $\wt{\Pi}_\phi(G_n^*)$.
Since $G_n$ shares the Langlands $L$-group with $G_n^*$, an Arthur parameter $\phi\in\wt{\Phi}_2(G_n^*)$ may be regarded as an Arthur parameter of $G_n$.
However, the associated global Arthur packet $\wt{\Pi}_\phi(G_n)$ could be empty. If the global Arthur packet $\wt{\Pi}_\phi(G_n)$ is not empty, we call the
global Arthur parameter $\phi$ of $G_n^*$ is $G_n$-{\sl relevant}. The set of all $G_n$-relevant, generic global Arthur parameters of $G_n^*$ is denoted by
$\wt{\Phi}_2(G_n^*)_{G_n}$. Now the global Vogan packet associated to $\phi\in\wt{\Phi}_2(G_n^*)$ is given by
\begin{equation}\label{gvp1}
\wt{\Pi}_\phi[G_n^*]
:=
\cup_{G_n}\wt{\Pi}_\phi(G_n)\,,
\end{equation}
where $G_n$ runs over all pure inner forms of $G_n^*$ over $F$. Similarly, we have global Arthur parameters $\phi'$ for $H_m^*$ as in \eqref{ap1},
\begin{equation}\label{ap2}
\phi'=(\tau'_1,1)\boxplus\cdots\boxplus(\tau'_{r'},1)\,,
\end{equation}
with only
difference that $\tau'_1,\cdots,\tau'_{r'}$ are now of symplectic type. The set of generic global Arthur parameters of $H_m^*$ is denoted by
$\wt{\Phi}_2(H_m^*)$. Accordingly, we have the global Arthur packet $\wt{\Pi}_{\phi'}(H_m^*)$ and the global Vogan packet $\wt{\Pi}_{\phi'}[H_m^*]$.

Assume that $(G_n^*,H_m^*)$ $(m<n)$ is a relevant pair in the sense of the Gan-Gross-Prasad conjecture in \cite{GGP12}, and $G_n\times H_m$ is a relevant
pure inner form of $G_n^*\times H_m^*$ over $F$. The global Vogan packet for $\phi\times\phi'$ is given by
\begin{equation}\label{gvp2}
\wt{\Pi}_{\phi\times\phi'}[G_n^*\times H_m^*]
:=
\cup_{G_n\times H_m}
\wt{\Pi}_{\phi\times\phi'}(G_n\times H_m)\,,
\end{equation}
where $G_n\times H_m$ runs through all relevant pure inner forms of $G_n^*\times H_m^*$ over $F$.
The tensor product $L$-function associated to the pair $\phi$ and $\phi'$ is defined to be
\begin{equation}\label{Lfn}
L(s,\phi\times\phi')
:=
\prod_{i=1}^r\prod_{j=1}^{r'}L(s,\tau_i\times\tau'_j)\,.
\end{equation}
The global Gan-Gross-Prasad conjecture (or GGP conjecture for short) (\cite{GGP12}) asserts that the central value $L(\frac{1}{2},\phi\times\phi')$ is non-zero if and only if
there exists a pair $(\pi_0,\sigma_0)$ in the global Vogan packet $\wt{\Pi}_{\phi\times\phi'}[G_n^*\times H_m^*]$ with a non-zero Bessel period for
$(\pi_0,\sigma_0)$. The uniqueness of such a pair $(\pi_0,\sigma_0)$ follows from the local GGP conjecture (\cite{GGP12}). When
such a pair exists, we call it a {\it the Gan-Gross-Prasad pair} or {\sl GGP pair} for short.

The {\sl branching problem} for this case is to ask: For any 
$\pi$ in the global Vogan packet 
$\wt{\Pi}_{\phi}[G_n^*]$, is there any $\sigma$ in the global 
Vogan packet 
$\wt{\Pi}_{\phi'}[H_m^*]$ such that 
$\pi\otimes\sigma$ belongs to the global Vogan packet 
$\wt{\Pi}_{\phi\times\phi'}[G_n^*\times H_m^*]$ and 
$(\pi,\sigma)$ forms a GGP pair, i.e. $(\pi,\sigma)$ has a 
non-zero Bessel period. The global Gan-Gross-Prasad conjecture 
is to characterize the occurrence of $\sigma$ in 
the branching decomposition of $\pi$ with respect to relevant pair $(G_n, H_m)$ in terms of the central value of the $L$-function,
$L(\frac{1}{2},\phi\times\phi')$ and the local symplectic root 
numbers associated to the pair $(\pi,\sigma)$. 
Because of the uniqueness of the pair $(\pi_0,\sigma_0)$, it makes sense to ask the following question: 

{\bf Reciprocal Branching Problem:}\ 
{\sl For $\sigma\in\wt{\Pi}_{\phi'}(H_m)$, with
$\sigma\not\cong\sigma_0$, how to find some group $G'$ and an irreducible cuspidal automorphic representation $\pi'$ of $G'(\BA)$ with a generic global Arthur parameter $\phi'$, such that
$G'$ and $H_m$ form a relevant pair and $\pi'$ and $\sigma$ have a non-zero Bessel period? }

\subsection{The reciprocal branching problem and the twisted automorphic descent}\label{subsection: intro of reciprocal problem}
We are going to study this reciprocal branching problem for 
automorphic representations of orthogonal groups within the general framework of global Vogan packets and the global Gan-Gross-Prasad conjecture, by means of the twisted automorphic descents.

Assume that $\sigma$ belongs to the global Arthur packet $\wt{\Pi}_{\phi'}(H_m)$ for some pure inner form $H_m$ of $H_m^*$ over $F$.
To present an answer to the reciprocal branching problem in this situation, one may take a generic global Arthur parameter $\phi\in \wt {\Phi}[G_n^*]$ for some quasi-split form $G_n^*$ at first. 
By the implication of the global GGP conjecture, we may also assume $L(\frac{1}{2},\phi\times\phi')\neq 0$.
Assume that $(\pi_0,\sigma_0)$ is the unique GGP pair in the global Vogan packet $\wt{\Pi}_{\phi\times\phi'}[G_n^*\times H_m^*]$, which has a non-zero Bessel period,
as given by the global GGP conjecture.
If $\sigma\simeq \sigma_0$, then the global GGP conjecture predicts that the member $\pi_0$ in the global Vogan packet $\wt\Pi_{\phi}[G_n^*]$ gives an answer to the reciprocal branching problem. The twisted automorphic descent of \cite{JZ15} provides an explicit construction of this $\pi_0$ in terms of the generic global Arthur parameter $\phi$ and $\sigma$. 

In this paper we consider the reciprocal branching problem for any $\sigma$ belonging to the global Vogan packet $\wt{\Pi}_{\phi'}[H_m^*]$ but not equivalent to $\sigma_0$. 
The interesting part of this situation is that, by the uniqueness property in the GGP conjecture, for the fixed parameter $\phi$ of dimension $2n$, one does not expect that there exist $\pi\in \wt{\Pi}_{\phi}[G_n^*]$ such that $\pi$ and $\sigma$ have a non-zero Bessel period. 
The idea is to find an (specific) even special orthogonal group $G_{n+k}$ for some $k\geq 1$, which is a pure inner form of some quasi-split form $G_{n+k}^*$ and is relevant to $H_m$,
and construct explicitly an irreducible cuspidal automorphic representation $\pi_{n+k}$ of $G_{n+k}(\BA)$ with the properties that $\pi_{n+k}$ has
a generic global Arthur parameter and has a non-zero Bessel period with the given $\sigma$. We expect that the integer $k$ should be determined by the
{\sl first occurrence index} of $\sigma$ (see \cite{JZ15} or \S \ref{subsection: construction of automorphic descent} for the definition) in the tower of the Bessel descents from $\sigma$. 
Hence the construction of $G_{n+k}$ and $\pi_{n+k}$ should depend on the structure of the Bessel-Fourier coefficients of $\sigma$, which reflects the natural relation between the Bessel-Fourier coefficients and the twisted automorphic descents. 

We will give an answer to this reciprocal branching problem when $m=1$.
We fix a non-trivial additive character $\psi: F\bs \BA\lra \BC^\times$ and fix an positive integer $n$.
Let 
\begin{equation}\label{expression of tau}
\tau=\tau_1\boxplus\tau_2\boxplus\cdots\boxplus\tau_{r} 
\end{equation}
be the isobaric sum automorphic representation of $\GL_{2n}(\BA)$.
Here $\tau_i$ is a unitary irreducible cuspidal automorphic representation of $\GL_{n_i}(\BA)$ such that $\sum_{i=1}^r n_i=2n$.
Assume that $\tau$ corresponds to the generic Arthur parameter $\phi_\tau=\phi_{\tau_1}\boxplus\cdots\boxplus \phi_{\tau_r}$. Moreover, We assume that each $\tau_i$ is of orthogonal type, i.e. the $L$-function $L(s,\tau_i,\Sym^2)$ has a pole at $s=1$. Hence $\phi_\tau$ is a generic global Arthur parameter of some $G_n^*$.

On the other hand, we assume that $\tau_0$ is an irreducible unitary cuspidal representation of $\GL_2(\BA)$ of symplectic type and has the property that
$L(\frac{1}{2}, \tau\times \tau_0)\neq 0$.
Let $V_0$ be a quadratic space over $F$ of dimension $3$ and $H_1^{V_0}=\SO(V_0)$ be the corresponding special orthogonal group. In any case,
the $F$-split group is $H_1^*=\SO_3$. We also denote by $J_{V_0}$ the quadratic form of $V_0$.
Let $\sigma$ be an irreducible cuspidal representation of $H_1^{V_0}(\BA)$ parametrized by $\phi_{\tau_0}$.
For simplicity, we say a quasi-split form $G_n^*$ is relevant to $H_1^{V_0}$ if one of its pure inner form $G_n$ is relevant to $H_1^{V_0}$.
In this paper, we assume
\begin{ass}\label{depth assumption}
The representation $\sigma$ does not occur in the Gan-Gross-Prasad pair $(\pi_0,\sigma_0)$ in the global Vogan $L$-packet
$\wt\Pi_{\phi_\tau\times \phi_{\tau_0}}[G^*_n\times H_1^*]$
for the fixed positive integer $n$ and any quasi-split form $G_n^*$ relevant to $H_1^{V_0}$.
\end{ass}

The goal of this paper is, under Assumption \ref{depth assumption},
to construct an even special orthogonal group $G_{n+1}$ such that $G_{n+1}\times H^{V_0}_1$ is a relevant pure inner form of
$G_{n+1}^*\times H_1^*$, and construct an irreducible cuspidal automorphic representation $\pi_{n+1}$ of $G_{n+1}(\BA)$ such that
$\pi_{n+1}$ has a generic Arthur parameter and has a non-zero Bessel period with respect to $\sigma$. The more precise explanation is in order.

For the given representations $(\tau,\sigma)$, we construct in \S \ref{section of residual representation} a square-integrable residual automorphic
representation $\CE_{\tau\otimes\sigma}$ on $\SO^{V_0}_{4n+3}(\BA)$ with support $(P,\tau\otimes\sigma)$, where $P=MN$ is
the standard parabolic subgroup of $\SO^{V_0}_{4n+3}$ such that
$
M\simeq \GL_{2n}\times H_1^{V_0}.
$
A tower of {\it automorphic descent of $\tau$ twisted by $\sigma$}, which we denoted by $\pi_{\ell,\beta}=\CD_{\psi_{\ell,\beta}}(\CE_{\tau\otimes\sigma})$ with $\beta\in F^\times$, can be constructed by taking the Bessel-Fourier coefficients of certain depth $\ell$ (see \S \ref{section of residual representation}) of the residual representation $\CE_{\tau\otimes\sigma}$. For each $\ell$, if the twisted descent $\pi_{\ell,\beta}=\CD_{\psi_{\ell,\beta}}(\CE_{\tau\otimes\sigma})$ is non-zero, then it consists of certain automorphic functions on $G_{2n-\ell+1,\beta}(\BA)=\SO_{4n-2\ell+2,\beta}(\BA)$, with moderate growth.
After choosing a suitable basis, the group $G_{2n-\ell+1,\beta}$ can be arranged to associate with the symmetric matrix
$$
\wt J=\begin{pmatrix}
   &&w_{2n-\ell-1}\\&J_{V_{0,\beta}}&\\w_{2n-\ell-1}&&
  \end{pmatrix},
$$
where $w_i$ is an ($i\times i$)-matrix with only $1$'s on its anti-diagonal.
Note that it could be either $F$-split, quasi-split or non-quasi-split, where $J_{V_{0,\beta}}=\begin{pmatrix}
J_{V_0}&\\&-\beta
\end{pmatrix}$ is a $(4\times4)$-symmetric matrix that 
defines the $4$-dimensional quadratic space $V_{0,\beta}$.
We also denote by $\eta_{V_{0,\beta}}: F^\times \bs \BA^\times\lra \{\pm 1\}$ the quadratic character associated to the quadratic space $V_{0,\beta}$.
We usually call $G_{2n-\ell+1,\beta}$ the {\it target group} of the descent module $\CD_{\psi_\ell,\beta}(\CE_{\tau\otimes\sigma})$.
The point here is that under the Assumption \ref{depth assumption}, the first occurrence in this tower is at the depth $\ell^*=n$. In this case, we
denote the resulting representation by $\pi_{\beta}=\CD_{\psi_n,\beta}(\CE_{\tau\otimes\sigma})$, which consists of certain automorphic functions on $G_{n+1,\beta}(\BA)=\SO_{2n+2,\beta}(\BA)$, with moderate growth. The main results of this paper can be summarized as follows.

\begin{thm}\label{main thm for descent}
Let $\tau$ and $\sigma$ be as given above.
\begin{itemize}
\item[(a)] There exists $\beta\in F^\times$ such that the twisted descent $\pi_\beta=\CD_{\psi_n,\beta}(\CE_{\tau\otimes\sigma})\neq 0$.
\item[(b)] If $\sigma$ satisfies Assumption \ref{depth assumption}, then $\pi_\beta=\bigoplus_{i} \pi_\beta^{(i)}$ is a multiplicity free
direct sum of irreducible cuspidal automorphic representations $\pi_\beta^{(i)}$ of $\SO_{2n+2,\beta}(\BA)$.
Moreover, if $\omega_\tau=1$ and the $\beta\in F^\times$ in Part (a) is not a square, or if $\omega_\tau\neq 1$ and the $\beta\in F^\times$ in Part (a) is a square, then the same result holds without the above assumption.
\item[(c)] Assume $\omega_{\tau}=1$, and the $\beta\in F^\times$ in Part (a) is not a square. Then
 each irreducible component $\pi_\beta^{(i)}$ of $\pi_{\beta}$ belongs to a global Arthur packet with a generic global Arthur parameter
$\phi^{(i)}$. The parameter $\phi^{(i)}$ has the central character $\eta_{V_{0,\beta}}$, and has the property that
$L(\frac{1}{2}, \phi^{(i)}\times\phi_{\tau_0})\neq 0$ and $(\pi_{\beta}^{(i)}, \sigma)$ has a non-zero Bessel period.
\end{itemize}
\end{thm}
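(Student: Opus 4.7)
The plan is to treat parts (a), (b), and (c) of Theorem \ref{main thm for descent} as three facets of the first-occurrence phenomenon for the tower $\{\CD_{\psi_{\ell,\beta}}(\CE_{\tau\otimes\sigma})\}_\ell$ of twisted Bessel descents of the residual representation $\CE_{\tau\otimes\sigma}$, and to exploit throughout the link between these descents and global Bessel periods afforded by the twisted descent theory of \cite{JZ15}. For Part (a), the approach has two steps. First, I would establish upper-tower vanishing: for every $\ell > n$ and every $\beta\in F^\times$, the descent $\CD_{\psi_{\ell,\beta}}(\CE_{\tau\otimes\sigma})$ is identically zero. Using the realization of $\CE_{\tau\otimes\sigma}$ as an Eisenstein residue with cuspidal support $\tau\otimes\sigma$, one unfolds the Bessel--Fourier integral by the root-exchange technique of \cite{JZ15}; the resulting Bessel period on $\tau\otimes\sigma$ lies along a unipotent orbit that is too deep to support a non-zero cuspidal Bessel functional, so the integral vanishes. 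Second, I would produce a specific $\beta\in F^\times$ with $\pi_{n,\beta}\neq 0$: another unfolding reduces this to the non-vanishing of a global Bessel integral among $\tau$, $\sigma$, and $\tau_0$ whose residue identity is governed by $L(\tfrac{1}{2},\tau\times\tau_0)\neq 0$, and a local-global argument choosing $\beta$ so that $J_{V_{0,\beta}}$ matches the local data from $\sigma$ and $\tau_0$ at the finitely many ramified places produces the desired $\beta$.

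For Part (b), cuspidality of $\pi_\beta$ is a direct consequence of first occurrence at depth $n$: any non-trivial constant term of $\pi_\beta$ along a standard parabolic of $\SO_{2n+2,\beta}$ would, by a further root-exchange identity, produce a non-zero descent at some depth $\ell>n$, contradicting Part (a). Multiplicity freeness then follows from local considerations: the construction of $\pi_\beta$ equips each local component with a distinguished Bessel functional pairing against the corresponding local component of $\sigma$, and the local uniqueness of Bessel models from the local Gan--Gross--Prasad conjecture forces any two irreducible summands with equivalent local components to coincide globally. The ``moreover'' clause, in which the parity hypothesis on $\omega_\tau$ and $\beta$ holds, is handled by a direct check: such a parity mismatch is incompatible with the central character constraints on any GGP pair $(\pi_0,\sigma_0)$ in $\wt{\Pi}_{\phi_\tau\times\phi_{\tau_0}}[G_n^*\times H_1^*]$, so Assumption \ref{depth assumption} is automatically satisfied in that regime.

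For Part (c), I would identify each irreducible summand $\pi_\beta^{(i)}$ with a member of a global Arthur packet by computing its unramified Satake parameters at almost all places, where the twisted descent admits an explicit unramified formula that realizes these parameters as a twist of those of $\tau$ by the Satake parameter of $\sigma$. Combined with the cuspidality from Part (b), Arthur's classification \cite{A} then forces the associated global parameter $\phi^{(i)}$ to be generic. The central character $\eta_{V_{0,\beta}}$ is read off directly from the discriminant of the quadratic form defining $G_{n+1,\beta}$. The non-vanishing of the Bessel period of $(\pi_\beta^{(i)},\sigma)$ is built into the construction of $\pi_\beta$ via the reciprocity identity between Bessel--Fourier coefficients and Bessel periods, and the non-vanishing of the central value $L(\tfrac{1}{2},\phi^{(i)}\times\phi_{\tau_0})$ then follows from this Bessel period non-vanishing through the forward direction of the global Gan--Gross--Prasad conjecture.

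The main obstacle, as I see it, is the existence assertion in Part (a). Upper-tower vanishing is essentially structural and flows from the cuspidal support of $\CE_{\tau\otimes\sigma}$ together with standard unipotent orbit arguments; but exhibiting a single $\beta$ for which $\pi_{n,\beta}\neq 0$ demands a delicate global-to-local argument. The candidate global period must be factorized into local integrals, local non-vanishing must be verified at each place, and all local constraints must be met simultaneously by one $\beta\in F^\times$ without sacrificing relevance of the pair $(G_{n+1,\beta},H_1^{V_0})$ in the sense of the Gan--Gross--Prasad conjecture. This Hasse-type matching, in particular aligning the parity of $\omega_\tau(\beta)$ with the quadratic character of $V_{0,\beta}$, is where I expect the bulk of the technical work to reside.
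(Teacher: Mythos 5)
Your overall architecture for Parts (b) and (c) is close to the paper's: cuspidality via constant terms reducing to deeper descents, multiplicity-freeness via uniqueness of local Bessel models, identification of generic Arthur parameters via unramified Satake computations plus Arthur's classification, and the Bessel period/central value assertions via the machinery of \cite{JZ15} (note, though, that the paper derives $L(\frac12,\phi^{(i)}\times\phi_{\tau_0})\neq 0$ from a proven theorem of \cite{JZ15}, not from the forward direction of the GGP conjecture). However, there are two genuine gaps. First, your treatment of the existence half of Part (a) does not work as described and misses the paper's central new idea. There is no direct unfolding that reduces the non-vanishing of $\CD_{\psi_{n,\beta}}(\CE_{\tau\otimes\sigma})$ to $L(\frac12,\tau\times\tau_0)\neq 0$; that $L$-value only guarantees the existence of the pole producing $\CE_{\tau\otimes\sigma}$, and the classical GRS-style argument (relating a Whittaker coefficient of the descent to one of the residual representation) is unavailable here because the target group need not be quasi-split and the descent need not be generic. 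The paper instead proves non-vanishing by a wave-front set argument: it first shows, using \cite{GGS17} (comparison of degenerate and generalized Whittaker coefficients), that $\CE_{\tau\otimes\sigma}$ has a non-zero generalized Whittaker--Fourier coefficient attached to the non-special partition $[(2n)^2 1^3]$; then, by the orbit-raising theorem of \cite{JLS16}, it must also have one attached to the special expansion $[(2n+1)(2n-1)1^3]$, parametrized by quadratic forms $\{\beta,-\beta,q_{V_0}\}$ for some $\beta$; a Fourier expansion and exchange-of-roots manipulation then converts this coefficient into the depth-$n$ descent for that $\beta$. This mechanism is entirely absent from your proposal, and it is the crux of the theorem.

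Second, your claim that the upper-tower vanishing is ``essentially structural'' is false at the critical depth $\ell=n+1$. The vanishing for $\ell\geq n+2$ does follow from the cuspidal support via local unramified Jacquet module computations, but at depth $n+1$ the descent could a priori be a non-zero cuspidal representation of $\SO_{2n,\beta}(\BA)$ with parameter $\phi_\tau$ forming a GGP pair with $\sigma$; it is precisely Assumption \ref{depth assumption} (or, in the ``moreover'' regime, a local parity obstruction at an unramified place showing the relevant Jacquet module vanishes) that rules this out. Relatedly, your claim that the parity condition on $\omega_\tau$ and $\beta$ forces Assumption \ref{depth assumption} to hold automatically is not what the paper proves and is not justified: the paper's Proposition \ref{prop for special beta} bypasses the assumption by a direct local vanishing argument, not by verifying the assumption. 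Without repairing the depth-$(n+1)$ vanishing, your cuspidality argument in Part (b) does not close.
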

Theorem \ref{main thm for descent} gives an answer to the reciprocal branching problem for $\sigma$, and one may refer to Theorem \ref{main thm for reciprocal problem} at the end of this paper.

Part (a) of Theorem \ref{main thm for descent} provides a base for establishing the main results in this paper and will be proved in \S \ref{non-vanishing}.
It is always the hardest part in the theory of global automorphc descents to prove the global non-vanishing of the construction. In this paper, we 
find a new argument to do so. It is a combination of two methods recently developed. 
One of them is the result in \cite{GGS17} on relations between degenerate Whittaker models and generalized Whittaker models of representations (see \S \ref{FCdefinition} for definitions). Another one is the result in \cite{JLS16} on raising nilpotent orbits in the wave front set of representations. 
They are indispensable for this new way to establish the global non-vanishing of the construction of the twisted automorphic descents. 
More explicitly, first, using results in \cite{GGS17}, we show that $\CE_{\tau \otimes \sigma}$ has a non-zero generalized Whittaker-Fourier coefficient attached to the partition $[(2n)^2 1^3]$, which is not special.
Then, using results in \cite{JLS16}, we show that $\CE_{\tau \otimes \sigma}$ has a non-zero generalized Whittaker-Fourier
coefficient attached to the partition $[(2n+1)(2n-1)1^3]$, which is the smallest orthogonal special partition bigger than $[(2n)^2 1^3]$.
This eventually implies that there exists $\beta \in F^{\times}$, such that
$\CD_{\psi_{n,\beta}}(\CE_{\tau \otimes \sigma})$ is non-zero.
We note that the method we use here is different from that previously used in the theory of descents (see, for example, \cite{GRS11,JLXZ14}), and
is more conceptual and valid for more general situation (see Remark \ref{remark for non-vanishing in general cases}).

Part (c) of Theorem \ref{main thm for descent} is a connection of the descent construction to the reciprocal branching problem we are considering. 
Briefly the argument is as follows.
Let $\pi_\beta^{(i)}$ be an irreducible component of the descent $\pi_\beta$, which is cuspidal. By the unramified structure and 
the endoscopic classification of Arthur (\cite{A}), $\pi_{\beta}^{(i)}$ has a generic Arthur parameter $\phi^{(i)}$.
By unfolding of Eisenstein series as used in \cite{JZ15}, we can show that the Bessel period for $(\pi_\beta^{(i)},\sigma)$ is non-zero.  
Note that the main result of \cite{JZ15} also tells that the central value
$L(\frac{1}{2}, \phi^{(i)}\times\phi_{\tau_0})$ is also non-zero (see \S \ref{GGP}).


\medskip

The content of this paper is organized as following. In \S \ref{residual rep}, we introduce the construction of the automorphic descent in the case
considered in this paper, and study some of its local structures in \S \ref{local aspects}.
In \S \ref{cuspidality} -- \S \ref{non-vanishing}, we study some global aspects of the descent.
In particular, we show the cuspidality of the descent in \S \ref{cuspidality}, and the non-vanishing of the descent in \S \ref{non-vanishing}.
Hence we obtain  Part (a) and Part (b) of Theorem \ref{main thm for descent}.
Finally in \S \ref{GGP}, we prove Part (c) of Theorem \ref{main thm for descent}.

\subsection*{Acknowledgement}
Parts of this paper were written in the Spring of 2016 when the third named author visited the School of Mathematics, University of Minnesota.
He appreciates very much the hospitality and comfortable working condition provided by the School of Mathematics. We would like to thank Lei Zhang for helpful comments. 

\section{Residual representations and the descent construction}\label{section of residual representation}\label{residual rep}

\subsection{Notation}\label{notation}

We set up some general notation that will be used throughout this paper. Let $F$ be a field of characteristic $0$, and $V$ be a quadratic space of dimension $4n+3$ defined over $F$,  with
quadratic form denoted by $\pair{\ , \ }$. Let $\wt m$ be the Witt index of $V$. Then $V$ has a polar decomposition
$$V=V^+\oplus W \oplus V^-\,,$$
where $V^+$ has dimension $\wt m$ and is a maximal totally isotropic subspace of $V$, and $W$ is anisotropic of dimension $4n+3-2\wt m$.
Fix a maximal flag
$$\CF: \quad \{0\}\subset V_1^+\subset V_2^+\subset \cdots \subset V_{\wt m}^+=V^+$$
in $V^+$, and choose a basis $\{e_1, \cdots, e_{\wt m}\}$ of $V^+$
over $F$ such that $V_i^+=\Span\{e_1,\cdots, e_i\}$, for $1 \leq i \leq \wt{m}$. Let $\{e_{-1},\cdots, e_{-\wt m}\}$ be a basis for $V^-$, which is dual to $\{e_1, \cdots, e_{\wt m}\}$, that is,
$$\pair{e_i,e_{-j}}=\delta_{i,j}, \ \text{for}\ 1\leq i,j\leq \wt m\,.$$

We have two cases to consider: $\wt m=2n$ or $\wt m=2n+1$. If $\wt m=2n$,
take a basis $\{e_0^{(1)}, e_0^{(2)}, e_0^{(3)}\}$ of $W$ such that the anisotropic quadratic form on $W$ is associated with $\displaystyle{J_{W}=\begin{pmatrix}1&&\\&\delta&\\ &&\alpha\end{pmatrix}}$,
where $\delta,\alpha \in F^\times$. In this case, the ternary quadratic form $x^2+\delta y^2+\alpha z^2$ does not represent $0$ in $F$, and 
the form of $V$ is associated with
$J=\begin{pmatrix}
   &&w_{2n}\\
   &J_{W}&\\
   w_{2n}&&
\end{pmatrix}$,
where $w_{i}$ is a $(i \times i)$-matrix with $1$'s on the anti-diagonal and zero's elsewhere.
If $\wt m=2n+1$, take a basis $\{e_0\}$ of $W$ with $\pair{e_0,e_0}=1$. Then the form of $V$ is associated with $J=w_{4n+3}$.
For each case, let $H=\SO(V)$, which could be $F$-split or $F$-non-split, according to the above two cases.

For $1\leq \ell\leq \wt m$, let $Q_{\ell}$ be the (maximal) parabolic subgroup fixing the flag
$$ 0\subset V_{\ell}^+\subset V^+\,.$$
Then $Q_\ell$ has a Levi decomposition $M'_{\ell} \cdot U_{\ell}$ with Levi subgroup $M'_{\ell}\simeq \GL_{\ell} \times \SO(V^{(\ell)})$,
where $V^{(\ell)}$ is the subspace which sits into the decomposition
$$V=V_\ell^+\oplus V^{(\ell)} \oplus V_\ell^-\,.$$
For simplicity, when $\ell=2n$, let $P=Q_{2n}$, $M=M'_{2n}$, $U=U_{2n}$, and
$V_0=V^{(2n)}$. Then $M\simeq \GL_{2n}\times \SO(V_0)$, and  $\SO(V_0)$ is an $F$-
split or $F$-non-split special orthogonal group with $\mathrm{dim}\, V_0 = 3$. Accordingly, we may sometimes denote $H=\SO(V)$ by $\SO_{4n+3}^{V_0}$ if we want to indicate the structure of
the group. Note that if $\wt m=2n$, we have $V_0=W$, the anisotropic kernel of $V$ (and hence $J_{V_0}=J_W$).

For $1\leq \ell \leq \wt m$, we also let $P_\ell$ be the parabolic subgroup of $H$ which stabilizes the partial flag
$$\CF_\ell: \quad 0\subset V_1^+ \subset V_2^+ \subset\cdots \subset V_\ell^+\,.$$
It has a Levi decomposition $P_\ell=M_\ell\cdot N_\ell$ with Levi subgroup $M_\ell\simeq \GL_1^\ell\times \SO(V^{(\ell)})$.

\subsection{The residual representations}\label{subsection: residual rep}
Now we take $F$ to be a number field and denote $\BA=\BA_F$ to be its ring of adeles.
Let $\tau=\tau_1\boxplus\tau_2\boxplus\cdots\boxplus \tau_r$ be an isobaric sum automorphic representation of $\GL_{2n}(\BA)$, and $\sigma$ an irreducible unitary cuspidal representation of $\SO(V_0)(\BA)$.
Here $\tau_i$'s are unitary irreducible cuspidal automorphic representations of $\GL_{n_i}(\BA)$, satisfying $\sum_{i=1}^r n_i=2n$.
Take $H=\SO_{4n+3}^{V_0}$ with $\wt m=2n$ as before.
For $s\in \BC$ and an automorphic function $H(\BA)$
$$\phi_{\tau\otimes \sigma}\in \CA(M(F)U(\BA)\bs H(\BA))_{\tau\otimes \sigma}\,,$$
following \cite[\S II.1]{MW95}, one defines
$\lambda_s \phi_{\tau\otimes \sigma}$ to be
$(\lambda_s\circ m_P)\phi_{\tau\otimes \sigma}$, where $\lambda_s \in X_{M}^H\simeq \BC$ (see \cite[\S I.1]{MW95} for the definition of $X_M^H$ and the map $m_P$),
and defines the corresponding Eisenstein series
$$E(s,h,\phi_{\tau\otimes\sigma})=\sum_{\gamma\in P(F)\bs H(F)}\lambda_s \phi_{\tau\otimes \sigma}(\gamma h)\,,$$
which converges absolutely for $\Re(s)\gg 0$ and has meromorphic continuation to the whole complex plane (\cite[\S IV]{MW95}).
Under the normalization of Shahidi (\cite{Sh90}), one may take $\lambda_s=s\wt\alpha$, where $\alpha$ is the unique reduced root of the maximal $F$-split torus of $H$ in $U$.

When $L(s,\tau_i,\Sym^2)$ has a pole at $s=1$ for all $i=1,\cdots,r$ (i.e. $\tau$ is of orthogonal type), and $L(\frac{1}{2},\tau\times\sigma)\neq 0$, the Eisenstein series $E(s,h,\phi_{\tau\otimes\sigma})$ has a pole at $s=\frac{1}{2}$ of order $r$ (see \cite[Proposition 5.3]{JZ15}).
Let $\CE_{\tau\otimes \sigma}$ denote the automorphic representation of $H(\BA)$ generated by the iterated residues
$\Res_{s=\frac{1}{2}}E(s,h,\phi_{\tau\otimes \sigma})$
for all
$$\phi_{\tau\otimes \sigma}\in \CA(M(F)U(\BA)\bs H(\BA))_{\tau\otimes \sigma}\,.$$
It is square integrable by the $L^2$-criterion in \cite{MW95}. Moreover, the residual representation $\CE_{\tau\otimes \sigma}$ is irreducible (see \cite[Theroem A]{M11}).
Note that the global Arthur parameter (\cite{A}) for $\CE_{\tau\otimes \sigma}$ is
$(\tau_1, 2)\boxplus (\tau_2, 2)\boxplus \cdots \boxplus (\tau_r, 2) \boxplus \phi_\sigma$ (\cite[\S 6]{JZ15}).

\subsection{The twisted automorphic descent}\label{subsection: construction of automorphic descent}

The twisted automorphic descents was introduced in \cite{JZ15}, which extends the automorphic descent of Ginzburg-Rallis-Soudry
(\cite{GRS11}) to much more general situation. Following \cite{JZ15} and \cite{GRS11}, we introduce a family of Bessel-Fourier coefficients that defines the
descent.

For $1\leq\ell<2n$, we consider the parabolic subgroup $P_\ell=M_\ell N_\ell$ of $H$ defined in \S \ref{notation}.
The nilpotent subgroup
$N_{\ell}(F)$ consists of elements of the form
$$u=u_\ell(z,x,y)=
\begin{pmatrix}
z&z\cdot x&y\\
&I_{4n+3-2\ell}&x'\\
&&z^*
\end{pmatrix},
$$
where $z\in Z_{\ell}(F)$, $x\in \Mat_{\ell\times (4n+3-2\ell)}(F)$, and $y\in \Mat_{\ell\times \ell}(F)$. Here $Z_\ell$ is the maximal upper-triangular
unipotent subgroup of $\GL_\ell$, and $x'=J^{(\ell)}\,\!^t\!x w_\ell$, where $J^{(\ell)}=\begin{pmatrix}
   &&w_{2n-\ell}\\
   &J_{V_0}&\\
   w_{2n-\ell}&&
\end{pmatrix}$.
Take an anisotropic vector $w_0\in W_\ell$ with $\pair{w_0,w_0}$ in a given square class of $F^\times$, and define a homomorphism
$\chi_{\ell,w_0}: N_\ell\lra \BG_a$ by
$$\chi_{\ell,w_0}(u)=
\sum_{i=2}^\ell \pair{u\cdot e_i,e_{-(i-1)}}+\pair{u \cdot w_0,e_{-\ell}}\,.$$
Define also a character
\begin{equation}\label{G-G character}
\psi_{\ell,w_0}=\psi_F\circ\chi_{\ell,w_0}: N_\ell(\BA)\lra \BC^\times\,,
\end{equation}
with $\psi_F: F\bs \BA\lra \BC^\times$ being a fixed non-trivial additive character.
It is clear that the character $\psi_{\ell,w_0}$ is trivial on $N_{\ell}(F)$.
Now the adjoint action of $M_\ell$ on $N_\ell$ induces an action of $\SO(V^{(\ell)})$ on the set of all such characters $\psi_{\ell,w_0}$.
The stabilizer $L_{\ell,w_0}$ of $\chi_{\ell, w_0}$ in $\SO(V^{(\ell)})$ is equal to $\SO(w_0^\perp\cap V^{(\ell)})$.

Let $\Pi$ be an automorphic representation of $H(\BA)$. For $f\in V_\Pi$ and $h\in H(\BA)$, we define the
$\psi_{\ell,w_0}$-Fourier coefficients of $f$ by
\begin{equation}\label{Gelfand-Graev}
 f^{\psi_{\ell, w_0}}(h)=\int_{[N_\ell]} f(vh)\psi_{\ell,w_0}^{-1}(v)\, \mathrm{d}v\,,
\end{equation}
where $[N_\ell]$ denotes the quotient $N_\ell(F)\bs N_\ell(\BA)$. This is one of the Fourier coefficients of $f$ associated to the partition
$[(2\ell+1)1^{4n+2-2\ell}]$. As $\ell$ varies, it produces a family of Bessel-Fourier coefficients of $f$ needed in this paper.
It is clear that $f^{\psi_{\ell, w_0}}(h)$ is left $L_{\ell,w_0}(F)$-invariant, and is of moderate growth on the Siegel domain of
$L_{\ell,w_0}(F)\bs L_{\ell,w_0}(\BA)$.

As explained in \cite[\S 2]{JLXZ14} and \cite[\S 2.4]{JZ15}, for $1\leq \ell <2n$, we  may take
$$
w_0=y_\beta=e_{2n}+\frac{\beta}{2}e_{-2n}
$$
for some $\beta\in F^\times$ as a precise choice of $w_0$. We have $\pair{y_{\beta},y_{\beta}}= \beta$, and also denote $\psi_{\ell,\beta}=\psi_{\ell, y_\beta}$ for simplicity.
Then in matrix form, we have
$$\begin{aligned}
\psi_{\ell,\beta}(u_{\ell}(z,x,y))
&=\psi(z_{1,1}+\cdots+z_{\ell-1,\ell}+x_{\ell,2n-\ell}+\frac{\beta}{2}x_{\ell,2n+4-\ell})\\
&=\psi_{Z_\ell}(z)\psi(x_{\ell,2n-\ell}+\frac{\beta}{2}x_{\ell,2n+4-\ell})\,.
\end{aligned}
$$
It follows that the stabilizer of the character is
$L_{\ell,\beta}:=L_{\ell,y_{\beta}}=\SO_{4n+2-2\ell,\beta}$, which is
an even special orthogonal group associated to the symmetric matrix
$$
\wt J=\begin{pmatrix}
   &&w_{2n-\ell-1}\\&J_{V_{0,\beta}}&\\w_{2n-\ell-1}&&
  \end{pmatrix},
$$
where
$\displaystyle{J_{V_{0,\beta}}=
\begin{pmatrix}
 J_{V_0}&\\
 &-\beta
\end{pmatrix}
}$.
This group can be split, quasi-split or non-quasi-split over $F$, depending on the choice of $V_0$ and $\beta$ (Proposition 2.5 of \cite{JZ15}).
We also denote by $V_{0,\beta}$ the $4$-dimensional quadratic space over $F$ associated to $J_{V_{0,\beta}}$.
Define $\CD_{\psi_{\ell,\beta}}(\Pi)$ to be the space of $L_{\ell,\beta}(\BA)$-span of the automorphic functions
$f^{\psi_{\ell, w_0}}\big|_{L_{\ell,\beta}(\BA)}$ with all $f\in V_\Pi$, 
where the group $L_{\ell,\beta}(\BA)$ acts by right translation.

Following \cite{JZ15}, the twisted automorphic descent construction in this paper is to take $\Pi=\CE_{\tau\otimes\sigma}$ in the above construction,
here $\CE_{\tau\otimes\sigma}$ is the residual representation we have introduced before.
In this case, we have a family of automorphic $L_{\ell,\beta}(\BA)$-modules:
$$\pi_{\ell,\beta}=\CD_{\psi_{\ell,\beta}}(\CE_{\tau\otimes\sigma})\,.$$
It has been seen in many previous works (see \cite{GRS11, JLXZ14, JZ15}) that $L_{\ell,\beta}(\BA)$-modules $\pi_{\ell,\beta}$ satisfy the
so-called {\it tower property} when the depth $\ell$ varies. 
That is, there exists an $\ell^*$ such that $\pi_{\ell^*,\beta}\neq 0$ for some choice of data, and $\pi_{\ell,\beta}=0$ for all $\ell^*<\ell<2n$.
We call $\ell^*$ the {\it first occurrence index} (of $\sigma$) for the tower $\{\pi_{\ell,\beta}\}_\ell$. 
In particular, at the first occurrence index $\ell^*=\ell^*(\CE_{\tau\otimes\sigma})$,
the $L_{\ell^*,\beta}(\BA)$-module $\pi_{\ell^*,\beta}=\CD_{\psi_{\ell^*,\beta}}(\CE_{\tau\otimes\sigma})$ consists of cuspidal automorphic functions $f^{\psi_{\ell^*,\beta}}(\cdot)$ (see \cite{GRS11, JLXZ14, JZ15}).
In this case we denote $\pi_\beta=\pi_{\ell^*,\beta}$ for simplicity,
and call it {\it the automorphic descent of $\tau$ to $L_{\ell^*,\beta}(\BA)$, {\it twisted} by $\sigma$} or $\sigma$-twisted automorphic descent of $\tau$.
To be compatible with the notation before, we denote $G_{2n-\ell+1,\beta}=L_{\ell,\beta}$, and call $G_{2n-\ell^*+1,\beta}(\BA)$ the {\it target group} of this descent construction.

In this paper, we will restrict ourselves to the pair $(\tau,\sigma)$ that satisfies Assumption \ref{depth assumption} in \S \ref{intro}.
In this situation, the first occurrence index is $\ell^*=n$ (see \S \ref{cuspidality}--\S \ref{non-vanishing}), and the
target group is $G_{n+1,\beta}(\BA)=\SO_{2n+2,\beta}(\BA)$, as stated in Theorem \ref{main thm for descent}.

\section{Local aspects of the descent}\label{local aspects}

In this section, we study the local analogue of the automorphic descent discussed in Section \ref{subsection: construction of automorphic descent}, which is certain twisted Jacquet module. We will calculate these Jacquet modules at unramified places.

\subsection{The twisted Jacquet modules}
We define the twisted Jacquet modules in both cases we are considering. Let $F$ be a $p$-adic field of characteristic $0$, and fix a non-trivial additive character $\psi: F\lra \BC^\times$.
Let $H=\SO(V)$ be a special orthogonal group over $F$ with $\dim(V)=4n+3$, and suppose that the Witt index $\wt m=2n$ or $2n+1$. Let $1\leq\ell\leq \wt m$ and take an anisotropic vector $w_0\in V^{(\ell)}$. We define the character $\psi_{\ell,w_0}$
on $N_{\ell}(F)$ similar to (\ref{G-G character}) in the global setting. Note that if $\ell=\wt m$, we take $w_0\in W=V^{(\wt m)}$, the anisotropic kernel of $V$ (see \S \ref{notation}).

For an irreducible admissible representation $\Pi$ of $H(F)$, one defines the {\it Jacquet module of Bessel type} (of depth $\ell$) to be
\begin{equation}
J_{\psi_{\ell,w_0}}(\Pi):= \Pi/\Span\{\pi(u)\xi-\psi_{\ell,w_0}(u)\xi\ |\ u\in N_\ell(F), \ \xi\in \Pi \}\,.
\end{equation}
Embed $\SO(V^{(\ell)})$ into $H$ via the Levi subgroup $M_\ell$, and set
$L_{\ell,w_0}=\Stab_{\psi_{\ell,w_0}}(\SO(V^{(\ell)}))$
as before. By definition, $J_{\psi_{\ell,w_0}}(\Pi)$ is a $L_{\ell,w_0}(F)$-module.
As before, when $\ell<\wt m$, we may take $w_0=y_\beta=e_{2n}+\frac{\beta}{2}e_{-2n}\in V^{(\ell)}$ with $\beta\in F^\times$. Then we set $J_{\psi_{\ell,y_{\beta}}}(\Pi)=J_{\psi_{\ell,\beta}}(\Pi)$, and $L_{\ell,y_\beta}=L_{\ell,\beta}$.
With a suitable choice of basis, $L_{\ell,\beta}(F)$ is determined by the symmetric matrix
$$\wt J=
\begin{cases}
\begin{pmatrix}
   &&w_{2n-\ell-1}\\&J_{W,\beta}&\\w_{2n-\ell-1}&&
  \end{pmatrix}, & \text{if $\wt m=2n$}\,,\\
  \\
  \begin{pmatrix}
   &&w_{2n-\ell}\\&J_{\beta}&\\w_{2n-\ell}&&
  \end{pmatrix}, & \text{if $\wt m=2n+1$}\,,
  \end{cases}$$
where
$J_{\beta}=
\begin{pmatrix}
1&\\
&-\beta
\end{pmatrix}$.
Recall that we have $V_0=W$ if $\wt m=2n$ (see \S \ref{notation}).

\subsection{The local unramified calculation of Jacquet modules}

We keep some notation which are used in \cite[Chapter 5]{GRS11}.

\textbf{Case I: trivial central character.}
Let $\tau$ be an irreducible, generic, admissible representation of $\GL_{2n}(F)$, which is of orthogonal type. We assume moreover that $\omega_\tau=1$. To consider the unramified cases, we write $\tau$ as a fully induced representation from the Borel subgroup:
\begin{equation}\label{unramified with trivial central character}
\tau=\mu_1\times\cdots\times \mu_n \times \mu_n^{-1}\times\cdots \times \mu_1^{-1}\,,
\end{equation}
where $\mu_i$'s are unramified characters of $F^\times$.
First we consider the case that $\wt m=2n+1$, and hence $H=\SO_{4n+3}(F)$ is $F$-split.
Let $\sigma=\Ind_{B_{\SO_3}(F)}^{\SO_3(F)} \xi$, here $\xi$ is also an unramified character of $F^\times$. Let $\pi_{\tau\otimes\sigma}$ be the unramified constituent of the induced representation
$\Ind_{P(F)}^{\SO_{4n+3}(F)}(\tau \lvert \cdot \rvert^{1/2}\otimes\sigma)$. We want to study the unramified constituents of the twisted Jacquet module $J_{\psi_{\ell,\beta}}(\pi_{\tau\otimes\sigma})$.

\begin{prop}\label{prop for unramified constituents I.1}
	Assume that $\wt m=2n+1$. Then the following hold:
	\begin{enumerate}
		\item Suppose that $\beta\in F^\times-{(F^\times)}^2$. Then the twisted Jacquet module $J_{\psi_{\ell,\beta}}(\pi_{\tau\otimes\sigma})=0$ for all $n+1\leq \ell\leq 2n$. And when $\ell=n$, any unramified constituent of $J_{\psi_{n,\beta}}(\pi_{\tau\otimes\sigma})$ is a subquotient of
		$\Ind^{\SO_{2n+2,\beta}(F)}_{B_{\SO_{2n+2,\beta}}(F)} \mu_1\otimes\cdots\otimes\mu_n\otimes 1\,.$
		\item Suppose that $\beta \in {(F^\times)}^2$. Then the twisted Jacquet module $J_{\psi_{\ell,\beta}}(\pi_{\tau\otimes\sigma})=0$ for all $n+2 \leq \ell\leq 2n$. When $\ell=n+1$, any unramified constituent of $J_{\psi_{n+1,\beta}}(\pi_{\tau\otimes\sigma})$ is a subquotient of
		$\Ind_{B_{\SO_{2n}}(F)}^{\SO_{2n}(F)} \mu_1\otimes\cdots\otimes\mu_n\,,$
		and when $\ell=n$, the unramified constituent of $J_{\psi_{n,\beta}}(\pi_{\tau\otimes\sigma})$ is a subquotient of
		$$
		\begin{aligned}&\quad \left(\bigoplus_\lambda \Ind_{B_{\SO_{2n+2}}(F)}^{\SO_{2n+2}(F)} \mu_1\otimes\cdots\otimes\mu_n\otimes\lambda\right)\\
		&\oplus\left(\bigoplus_{t=1}^n \Ind_{P^{1,\cdots,1,2,1,\cdots,1}_{\SO_{2n+2}}(F)}^{\SO_{2n+2}(F)} \mu_1\otimes\cdots\otimes \mu_{t-1}\otimes\mu_{t}(\det\!_{\GL_2})\otimes\mu_{t+1}\otimes\cdots\otimes\mu_n\right),
		\end{aligned}
		$$
		where $\lambda$ runs over all unramified representations (characters) of occurring in the restriction of $\sigma$ to the $F$-split torus $\SO_2(F)$.
	\end{enumerate}
\end{prop}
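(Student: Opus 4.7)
The plan is to reduce the computation to the analysis of a full principal series of $\SO_{4n+3}(F)$ and then apply a geometric-lemma-type decomposition to control which double cosets contribute to the twisted Jacquet module $J_{\psi_{\ell,\beta}}$.

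First I would observe that $\pi_{\tau\otimes\sigma}$ is a subquotient of the full Borel principal series
$I(\chi) := \Ind_{B(F)}^{\SO_{4n+3}(F)} \chi$,
where $\chi = \mu_1|\cdot|^{1/2}\otimes\cdots\otimes\mu_n|\cdot|^{1/2}\otimes\mu_n^{-1}|\cdot|^{1/2}\otimes\cdots\otimes\mu_1^{-1}|\cdot|^{1/2}\otimes\xi$. Since the Bessel--Jacquet functor $J_{\psi_{\ell,\beta}}$ is exact, the unramified constituents of $J_{\psi_{\ell,\beta}}(\pi_{\tau\otimes\sigma})$ must appear as subquotients of $J_{\psi_{\ell,\beta}}(I(\chi))$. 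This replaces the problem of understanding a mysterious unramified subquotient by the purely combinatorial problem of computing a twisted Jacquet functor on a principal series.

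Next I would invoke a Bruhat-type stratification $\SO_{4n+3}(F) = \bigsqcup_w P_\ell(F)\, w\, B(F)$, with $w$ ranging over a set of minimal-length representatives for $W_{M_\ell}\backslash W / W_B$, to filter $I(\chi)$ by $(P_\ell,N_\ell)$-subquotients. Each stratum contributes to $J_{\psi_{\ell,\beta}}$ only when $\psi_{\ell,\beta}$ is trivial on $w B(F) w^{-1} \cap N_\ell$ restricted to root subgroups that act trivially on the open piece; otherwise its contribution vanishes. I would enumerate the Weyl representatives $w$ and, for each, test this compatibility. For $\ell$ in the claimed vanishing range ($\ell \geq n+1$ in Part (1), $\ell \geq n+2$ in Part (2)), I expect a dimension/root-subgroup count to force the incompatibility for every orbit, giving the stated vanishing statements; the discrepancy between the two ranges is exactly the one extra Weyl cell that opens up when $\beta$ is a square.

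For the surviving orbits at $\ell = n$ (and $\ell = n+1$ in the square case) I would track how $w^{-1}$ twists $\chi$ to identify the resulting $L_{\ell,\beta}(F)$-module. In Part (1), where $\beta \notin (F^\times)^2$, the stabilizer $L_{n,\beta} \simeq \SO_{2n+2,\beta}$ is quasi-split non-split of torus rank $n$, which is why only $\mu_1\otimes\cdots\otimes\mu_n\otimes 1$ can possibly appear after the Weyl twist. In Part (2), where $\beta \in (F^\times)^2$, the target $L_{n,\beta}\simeq \SO_{2n+2}$ is split with an extra toral factor, so an additional unramified character $\lambda$ appears through the restriction of $\sigma$ to the split $\SO_2$; the further surviving orbits, where $w$ merges the pair $(\mu_t|\cdot|^{1/2},\mu_t^{-1}|\cdot|^{1/2})$ into a single $\GL_2$-block with character $\mu_t(\det_{\GL_2})$, give the extra summands indexed by $t$ and supported on the $(1,\ldots,1,2,1,\ldots,1)$-parabolic.

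The main obstacle will be the case-by-case enumeration of Weyl double cosets in $W_{M_\ell}\backslash W / W_B$ and the matching of $\psi_{\ell,\beta}$ against each orbit's unipotent stabilizer, with particular care to distinguish the square from the non-square case for $\beta$ in the stabilizer structure. Once each surviving stratum's contribution is correctly labelled by its Weyl-twisted character, assembling the direct sums of Borel- or $(1,\ldots,1,2,1,\ldots,1)$-parabolic-inductions is essentially bookkeeping, and the asserted forms of the unramified constituents follow.
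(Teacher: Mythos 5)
There is a genuine gap, and it lies in the very first reduction. You pass from $\pi_{\tau\otimes\sigma}$ to the full Borel principal series $I(\chi)$ and then hope to show, orbit by orbit, that $J_{\psi_{\ell,\beta}}(I(\chi))$ vanishes for $\ell$ in the claimed ranges. But $I(\chi)$ also contains the generic constituent of the principal series, whose Bessel-type Jacquet modules at these depths (indeed up to the Whittaker depth $\ell=\wt m$) are non-zero; so $J_{\psi_{\ell,\beta}}(I(\chi))\neq 0$ there, and your expected ``incompatibility for every Weyl cell'' cannot hold. Exactness of the twisted Jacquet functor only gives that $J_{\psi_{\ell,\beta}}(\pi_{\tau\otimes\sigma})$ is a subquotient of $J_{\psi_{\ell,\beta}}(I(\chi))$, which is useless for proving the vanishing statements once the latter is non-zero. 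The reduction to the Borel is simply too lossy: the vanishing is not a feature of the supercuspidal support $\chi$, it is a feature of \emph{which} induced representation from that support $\pi_{\tau\otimes\sigma}$ actually sits in.

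What makes the proposition work --- and what the paper does --- is to keep the $\GL_2$-block structure of $\tau$. One conjugates the inducing data by a Weyl element so that $\pi_{\tau\otimes\sigma}$ is the unramified constituent of $\Ind_{Q_{2n}(F)}^{\SO_{4n+3}(F)}\tau'\otimes\sigma$ with $\tau'=\Ind_{P_{2,\cdots,2}(F)}^{\GL_{2n}(F)}\mu_1(\det\!_{\GL_2})\otimes\cdots\otimes\mu_n(\det\!_{\GL_2})$, and then applies the general formula of Ginzburg--Rallis--Soudry (\cite[Theorem 5.1]{GRS11}), which expresses $J_{\psi_{\ell,\beta}}$ of such an induced representation in terms of the Bernstein--Zelevinsky derivatives $\tau'^{(\cdot)}$ of the inducing data (together with the $\delta_\beta$ dichotomy that produces your two ranges). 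The vanishing for $\ell\geq n+1$ (resp.\ $n+2$) comes precisely from $\tau'^{(\ell)}=0$ for $\ell\geq n+1$, a property the full principal series of $\GL_{2n}$ does not have (its derivatives survive up to order $2n$). Your double-coset analysis is essentially a reproof of the GRS formula, and it could in principle be carried out, but only on the induction from $Q_{2n}$ with the data $\tau'\otimes\sigma$, not on $I(\chi)$; the surviving terms at $\ell=n$ are then labelled by the $n$-th and $(n-1)$-th derivatives of $\tau'$ and by $J_{\psi'_{0,\beta}}(\sigma)$ and $J_{\mathrm{Wh},\pm}(\sigma)$, which is where the $\lambda$'s and the $\mu_t(\det\!_{\GL_2})$-blocks in Part (2) actually come from.
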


\begin{proof}
	We will apply \cite[Theorem 5.1]{GRS11} to calculate $J_{\psi_{\ell,\beta}}(\pi_{\tau\otimes\sigma})$, and we also follow the same notation there.
	By conjugation of some Weyl element, it suffices to consider the unramified constituent of the induced representation $\Ind_{Q_{2n}(F)}^{\SO_{4n+3}(F)}\tau'\otimes\sigma$ instead of $\pi_{\tau\otimes\sigma}$, here
	$$\tau'=\Ind_{P_{2,\cdots,2}(F)}^{\GL_{2n}(F)}\mu_1(\det\!_{\GL_2})\otimes\cdots\otimes\mu_{n}(\det\!_{\GL_2})\,.$$
	Note that the derivative (see \cite{BZ2} and \cite[\S 5]{GRS11}) $\tau'^{(\ell)}$ of $\tau'$ vanishes for $\ell\geq n+1$. Now applying \cite[Theorem 5.1 (i)]{GRS11} with $j=2n<\wt m$, one can see that
	the corresponding twisted Jacquet module vanishes for all $n+2 \leq \ell\leq 2n+1$.
	
	When $\ell=n+1$, by the formula in \cite[Theorem 5.1 (1)]{GRS11} we have
	$$\begin{aligned}
	&\ J_{\psi_{n+1,\beta}}(\Ind_{Q_{2n}(F)}^{\SO_{4n+3}(F)}\tau'\otimes\sigma)\\
	\equiv & \ \delta_{\beta}\cdot\left(\ind_{Q'_{n,+}(F)}^{\SO_{2n,\beta}(F)}|\det(\cdot)|^{\frac{1-n}{2}}{\tau'}^{(n)}\oplus \ind_{Q'_{n,-}(F)}^{\SO_{2n,\beta}(F)}|\det(\cdot)|^{\frac{1-n}{2}}{\tau'}^{(n)}\right),
	\end{aligned}$$
	here $\delta_\beta=1$ if $\beta\in {(F^\times)}^2$, and is zero otherwise, and $Q'_{n,\pm}$ are defined as in \cite[Remark 5.1]{GRS11}.
	Note that $|\cdot|^{\frac{1-n}{2}}\tau'^{(n)}=\mu_1\times\cdots\times\mu_n$, we obtain all the statements for $n+1\leq \ell \leq 2n$ in part (1) and (2) of the proposition.
		
	Finally we take $\ell=n$. Also by \cite[Theorem 5.1 (1)]{GRS11}, we have 
	\[
	\begin{aligned}
	J_{\psi_n,\beta}(\Ind_{Q_{2n}(F)}^{\SO_{4n+3}(F)}\tau'\otimes\sigma)&\equiv \left(\ind_{Q'_{n}(F)}^{\SO_{2n+2,\beta}(F)}\lvert\cdot\rvert^{\frac{1-n}{2}}\tau'^{(n)}\otimes J_{\psi'_{0,\beta}}(\sigma)\right)\oplus\\
	&\quad \quad\delta_{\beta} \cdot \bigg(
	\ind_{Q'_{n,+}(F)}^{\SO_{2n+2,\beta}(F)}|\det(\cdot)|^{\frac{2-n}{2}}{\tau'}^{(n-1)}\otimes J_{\mathrm{Wh},+}(\sigma) \\
	& \quad \quad \oplus \ind_{Q'_{n,-}(F)}^{\SO_{2n+2,\beta}(F)}|\det(\cdot)|^{\frac{2-n}{2}}{\tau'}^{(n-1)}\otimes J_{\mathrm{Wh},-}(\sigma)\bigg)\, ,
	\end{aligned}\]
	where $J_{\mathrm{Wh},\pm}(\sigma)$ are Jacquet modules with respect to certain Whittaker characters (see \cite[Proposition 5.5]{GRS11}).
	Here we have used the fact that $\tau'_{(n)}=0$ (see, for example, \cite[page 99]{GRS11}), and $Q'_{n}$ is defined in \cite[(5.16)]{GRS11}. Note also that here $Q_n'$ contains $B_{\SO_{2n+2,\beta}}$.
	Moreover, we have $J_{\mathrm{Wh},\pm}(\sigma)\simeq \BC$ since $\sigma$ is generic. Since $\delta_\beta=0$ if $\beta\notin {(F^\times)}^2$, we can conclude (1) from the above formula. And if $\beta\in {(F^\times)}^2$, we take $(n-1)$-th derivative of $\tau'$ (as in \cite[\S 4]{BZ2}), and then the last expression stated in the proposition follows as desired. 
	The $\lambda$'s in Part (2) of the proposition come from the Jacquet module $J_{\psi'_{0,\beta}}(\sigma)$, which is exactly the restriction of $\sigma$ to the $F$-split torus $\SO_2(F)$. 
\end{proof}

\medskip

Now suppose that $\wt m=2n$, so that $V^{(2n)}=V_0$ is an anisotropic quadratic space of dimension $3$. Let
$\sigma$ be an irreducible admissible (finite dimensional) representation of $\SO_3^{V_0}(F)$. Given
$\tau$ be as in (\ref{unramified with trivial central character}), we also want to study the possible unramified constituents of the twisted Jacquet module
$J_{\psi_{\ell,\beta}}(\Ind_{P(F)}^{\SO^{V_0}_{4n+3}(F)} \tau|\cdot|^{1/2}\otimes\sigma)\,.$
Since we are interested in the unramified constituents, we only need to consider $\beta\in F^\times$ such that the target group $G_{2n-\ell+1,\beta}$
is $F$-quasi-split.

\begin{prop}\label{prop for unramified constituents I.2}
	Let $\tau$ and $\sigma$ be as above, and take $\beta\in F^\times$ such that $G_{2n-\ell+1,\beta}$ is quasi-split over $F$.
	Then any unramified constituent of $J_{\psi_{\ell,\beta}}(\Ind_{P(F)}^{\SO^{V_0}_{4n+3}(F)} \tau|\cdot|^{1/2}\otimes\sigma)$ is zero for $n+1 \leq \ell\leq 2n-1$. When $\ell=n$, any unramified constituent of $J_{\psi_{\ell,\beta}}(\Ind_{P(F)}^{\SO^{V_0}_{4n+3}(F)} \tau|\cdot|^{1/2}\otimes\sigma)$ is a subquotient of
	$\Ind_{B_{\SO_{2n+2,\beta}}(F)}^{\SO_{2n+2,\beta}(F)} \mu_1\otimes\cdots\otimes\mu_n\otimes 1\,.$
\end{prop}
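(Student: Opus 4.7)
The strategy parallels that of Proposition \ref{prop for unramified constituents I.1}. The plan is first to conjugate by a suitable Weyl element and thereby replace the problem with the computation of the twisted Jacquet module of the fully parabolically induced representation
$$\Ind_{Q_{2n}(F)}^{\SO^{V_0}_{4n+3}(F)} \tau' \otimes \sigma, \qquad \tau' = \Ind_{P_{2,\ldots,2}(F)}^{\GL_{2n}(F)} \mu_1(\det\!_{\GL_2}) \otimes \cdots \otimes \mu_n(\det\!_{\GL_2}),$$
in place of $\pi_{\tau \otimes \sigma}$. The key input from $\tau$ is the vanishing of Bernstein-Zelevinsky derivatives: ${\tau'}^{(k)} = 0$ for $k \geq n+1$, and ${\tau'}_{(n)} = 0$.

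I then apply \cite[Theorem 5.1]{GRS11}, now in the regime where the anisotropic kernel $V_0 = W$ of $V$ has dimension $3$, so that $\wt m = 2n$ and $j = 2n = \wt m$ in the notation there. For $n+1 \leq \ell \leq 2n-1$, every surviving term in the resulting formula for the Jacquet module involves a derivative ${\tau'}^{(k)}$ with $k \geq n+1$ or else ${\tau'}_{(n)}$, and therefore vanishes. The crucial simplification relative to Proposition \ref{prop for unramified constituents I.1}(2) is that the $\delta_\beta$-dichotomy and the associated $\pm$-pieces of \cite[Remark 5.1]{GRS11} do not arise: the anisotropy of $V_0$, together with the quasi-split hypothesis on $G_{2n-\ell+1,\beta}$, prevents the hyperbolic splitting of the relevant isotropic line in $V_{0,\beta}$ from branching, and the square versus non-square dichotomy that produced extra summands in the split case collapses.

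At $\ell = n$, the only non-vanishing term is
$$\ind_{Q'_n(F)}^{\SO_{2n+2,\beta}(F)} \lvert \cdot \rvert^{(1-n)/2} {\tau'}^{(n)} \otimes J_{\psi'_{0,\beta}}(\sigma),$$
where $Q'_n$ contains $B_{\SO_{2n+2,\beta}}$. Since $\lvert \cdot \rvert^{(1-n)/2} {\tau'}^{(n)} = \mu_1 \times \cdots \times \mu_n$, and since for $\sigma$ of anisotropic type the factor $J_{\psi'_{0,\beta}}(\sigma)$ contributes only an unramified character of the residual anisotropic stabilizer (which must be trivial in order for any unramified subquotient to survive), every unramified subquotient of the Jacquet module embeds into $\Ind_{B_{\SO_{2n+2,\beta}}(F)}^{\SO_{2n+2,\beta}(F)} \mu_1 \otimes \cdots \otimes \mu_n \otimes 1$, as asserted.

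The main technical obstacle will be verifying that the $\pm$-splitting of \cite[Remark 5.1]{GRS11} indeed collapses to a single summand in the $\wt m = 2n$ anisotropic regime, and the explicit identification of $J_{\psi'_{0,\beta}}(\sigma)$ as contributing only a trivial unramified character on the torus factor of $B_{\SO_{2n+2,\beta}}$; both ultimately rest on the rigidity of the fixed anisotropic ternary form $J_{V_0}$ and on the compatibility of $\beta$ with the quasi-split hypothesis.
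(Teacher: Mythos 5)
Your proposal follows essentially the same route as the paper: conjugate by a Weyl element to reduce to the fully induced $\tau'\otimes\sigma$, apply \cite[Theorem 5.1 (2)]{GRS11} with $j=2n=\wt m$ so that the vanishing of the derivatives ${\tau'}^{(k)}$ for $k\geq n+1$ kills the range $n+1\leq\ell\leq 2n-1$, and at $\ell=n$ identify the single surviving term $\ind_{Q'_\beta(F)}^{\SO_{2n+2,\beta}(F)}|\det(\cdot)|^{(1-n)/2}{\tau'}^{(n)}\otimes J_{\psi'_{0,v_\beta}}(\sigma)$ (up to the factor $\delta^1_{\SO^{V_0}_{4n+3},\beta}$, which is where the quasi-split hypothesis on $\beta$ enters). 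Your added remark that $J_{\psi'_{0,v_\beta}}(\sigma)$ can only contribute the trivial unramified character of the anisotropic $\SO_2$-factor is a slightly more explicit justification of the ``$\otimes\,1$'' than the paper gives, but the argument is the same.
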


\begin{proof}
	We will also use \cite[Theorem 5.1]{GRS11}. By conjugation of some Weyl element, it suffices to consider the unramified constituent of $J_{\psi_{\ell,\beta}}(\Ind_{P(F)}^{\SO^{V_0}_{4n+3}(F)} \tau'\otimes\sigma)$, where
	$$\tau'=\Ind_{P_{2,\cdots,2}(F)}^{\GL_{2n}(F)}\mu_1(\det\!_{\GL_2})\otimes\cdots\otimes\mu_{n}(\det\!_{\GL_2})\,.$$
	Applying \cite[Theorem 5.1 (2)]{GRS11} with $j=2n=\wt m$, we can see that $J_{\psi_{\ell,\beta}}(\Ind_{P(F)}^{\SO^{V_0}_{4n+3}(F)} \tau'\otimes\sigma)=0$ for $n+1\leq \ell\leq 2n-1$.  When $\ell=n$, we have
	$$J_{\psi_{n,\beta}}(\Ind_{P(F)}^{\SO^{V_0}_{4n+3}(F)} \tau'\otimes\sigma)\equiv \delta^{1}_{\SO^{V_0}_{4n+3},\beta}\cdot  \ind_{Q_\beta'(F)}^{\SO^{V_0}_{2n+2,\beta}(F)} |\det(\cdot)|^{\frac{1-n}{2}}{\tau'}^{(n)}\otimes J_{\psi'_{0,v_\beta}}(\sigma)\,.$$
	Here $\delta^{1}_{\SO^{V_0}_{4n+3},\beta}=1$ (see \cite[Theorem 5.1]{GRS11}) only if $\beta\in F^\times$ is taken to be such that $G_{2n-\ell+1,\beta}$ is quasi-split over $F$, and $Q_\beta'$ is the same as in \cite[(5.30)]{GRS11}.
	Moreover, the Jacquet module $J_{\psi_0',v_\beta}(\sigma)$ is just the restriction to an $F$-quasi-split group $\SO^{V_0}_{2,\beta}(F)$.
	Then we obtain the last statement the proposition by taking $n$-th derivative of $\tau'$.
\end{proof}

\textbf{Case II: non-trivial central character.}
Let $\lambda_0$ be the unique non-trivial unramified quadratic character of $F^\times$, and let
\begin{equation}\label{unramified with non-trivial central character}
\tau=\mu_1\times\cdots\times \mu_{n-1} \times 1\times\lambda_0\times \mu_{n-1}^{-1}\times\cdots \times \mu_1^{-1}\,,
\end{equation}
where $\mu_i$'s are unramified characters of $F^\times$ (recall that $\tau$ is of orthogonal type).
We also first consider the case $\wt m =2n+1$. Let $\sigma=\Ind_{B_{\SO_3}(F)}^{\SO_3(F)} \xi$, where $\xi$ is also an unramified character of $F^\times$.
In this case, we have
\begin{prop}\label{prop for unramified constituents II.1}
The twisted Jacquet module $J_{\psi_{\ell,\beta}}(\pi_{\tau\otimes\sigma})=0$ for all $n+2 \leq \ell\leq 2n$.
Moreover, define that $\delta_\beta=1$ if $\beta\in {(F^\times)}^2$ and $\delta_\beta=0$ otherwise, 
and define that $\delta^0_\beta=1$ if $\beta\notin {(F^\times)}^2$ and $\delta^0_\beta=0$ otherwise. Then
	\begin{enumerate}
		\item When $\ell=n+1$, any unramified constituent of $J_{\psi_{n+1,\beta}}(\pi_{\tau\otimes\sigma})$ is a subquotient of
		$$\begin{aligned}
		&\quad \ \ \delta^0_\beta\cdot  \Ind_{B_{\SO_{2n,\beta}}(F)}^{\SO_{2n,\beta}(F)} \mu_1\otimes\cdots\otimes\mu_{n-1}\otimes  1
\,.
		\end{aligned}$$
		
		\item When $\ell=n$, any unramified constituent of $J_{\psi_{n,\beta}}(\pi_{\tau\otimes\sigma})$ is a subquotient of
	$$\begin{aligned}
	    &\quad \bigoplus_{t=1}^{n-1} \Ind_{P^{1,\cdots,1,2,1,\cdots,1}_{\SO_{2n+2,\beta}}(F)}^{\SO_{2n+2,\beta}(F)} \mu_1\otimes\cdots\otimes \mu_{t-1}\otimes\mu_{t}(\det\!_{\GL_2})\otimes\mu_{t+1}\otimes\cdots\otimes\mu_{n-1}\otimes 1\\
        &\quad \bigoplus_{t=1}^{n-1} \delta_\beta\cdot\Ind_{P^{1,\cdots,1,2,1,\cdots,1}_{\SO_{2n+2,\beta}}(F)}^{\SO_{2n+2,\beta}(F)} \mu_1\otimes\cdots\otimes \mu_{t-1}\otimes\mu_{t}(\det\!_{\GL_2})\otimes\mu_{t+1}\otimes\cdots\otimes\mu_{n-1}\otimes \lambda_0\\
		&\quad \oplus \Ind_{B_{\SO_{2n+2,\beta}}(F)}^{\SO_{2n+2,\beta}(F)} \mu_1\otimes \cdots \otimes\mu_{n-1}\otimes \xi\otimes  1\\
		&\quad \oplus \Ind_{B_{\SO_{2n+2,\beta}}(F)}^{\SO_{2n+2,\beta}(F)} \mu_1\otimes \cdots \otimes\mu_{n-1}\otimes \lambda_0|\cdot |^{\frac{1}{2}}\otimes  1\\
		&\quad \oplus \delta_\beta\cdot\bigg(\Ind_{B_{\SO_{2n+2,\beta}}(F)}^{\SO_{2n+2,\beta}(F)} \mu_1\otimes \cdots \otimes\mu_{n-1}\otimes \xi\otimes  \lambda_0 \\
		&\qquad \qquad \qquad \oplus \Ind_{B_{\SO_{2n+2,\beta}}(F)}^{\SO_{2n+2,\beta}(F)} \mu_1\otimes \cdots \otimes\mu_{n-1}\otimes |\cdot|^{\frac{1}{2}}\otimes  \lambda_0 \bigg)\\
		&\quad \oplus \delta_\beta^0\cdot\Ind_{B_{\SO_{2n+2,\beta}}(F)}^{\SO_{2n+2,\beta}(F)} \mu_1\otimes \cdots \otimes\mu_{n-1}\otimes |\cdot|^{\frac{1}{2}} \otimes 1 \,.
		\end{aligned}$$		
	\end{enumerate}
\end{prop}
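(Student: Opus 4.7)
The proof parallels that of Proposition \ref{prop for unramified constituents I.1}, substituting the new induction data for $\tau$ and keeping careful track of the extra contributions coming from the generic block $1 \times \lambda_0$. After conjugating by an appropriate Weyl element, the computation of $J_{\psi_{\ell,\beta}}(\pi_{\tau \otimes \sigma})$ reduces to that of
\[
J_{\psi_{\ell,\beta}}\bigl(\Ind_{Q_{2n}(F)}^{\SO_{4n+3}(F)} \tau' \otimes \sigma\bigr),
\]
where
\[
\tau' := \mu_1(\det\!_{\GL_2}) \times \cdots \times \mu_{n-1}(\det\!_{\GL_2}) \times (1 \times \lambda_0)
\]
is viewed as a representation of $\GL_{2n}(F)$ parabolically induced from the $(2,\ldots,2)$-parabolic, and the last block is an irreducible generic principal series (which distinguishes this case from the treatment in Proposition \ref{prop for unramified constituents I.1}).

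I would then invoke \cite[Theorem 5.1 (1)]{GRS11} with $j = 2n < \wt m = 2n+1$ to express the Jacquet module as a direct sum of induced representations whose inducing data involve certain Bernstein--Zelevinsky derivatives ${\tau'}^{(k)}$ (twisted by $|\det(\cdot)|^{s}$ for explicit $s$ depending on $\ell$ and $k$), tensored with one of $\sigma$, the Whittaker--Jacquet modules $J_{\mathrm{Wh},\pm}(\sigma)$, or the torus restriction $J_{\psi'_{0,\beta}}(\sigma)$. The emergence of the factors $\delta_\beta$ and $\delta_\beta^0$ comes from the square-versus-non-square dichotomy at the Heisenberg step in the GRS formula: only the corresponding Jacquet functor on $\sigma$ is non-zero in each case, and in the non-square case the target group $G_{n+1,\beta}$ is the non-split inner form, which forces the last tensor slot to take only the anisotropic character $1$.

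The key computational step is then the Leibniz-type expansion of ${\tau'}^{(k)}$. Each block $\mu_i(\det\!_{\GL_2})$ is a non-generic representation of $\GL_2$, so only its $0$-th and $2$-nd derivatives survive; by contrast, the block $1 \times \lambda_0$ is generic, and all three derivatives in degrees $0, 1, 2$ are non-zero, contributing (up to the correct $|\cdot|^{\pm 1/2}$-twists coming from the GRS normalization) the representation itself, the characters $1$ and $\lambda_0$, and a one-dimensional constant, respectively. Summing over the admissible ways of distributing the total derivative order $\ell$ across the $n$ blocks of $\tau'$, and combining with the unramified constituent $\xi$ appearing in $\sigma|_{B_{\SO_3}(F)}$ (which surfaces through $J_{\psi'_{0,\beta}}(\sigma)$) and with the Whittaker character contributions from $J_{\mathrm{Wh},\pm}(\sigma)$, reproduces precisely the list of subquotients in Parts (1) and (2). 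Vanishing for $\ell \geq n+2$ follows because any admissible partition would then require a negative-degree derivative on some block.

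The main obstacle will be the careful bookkeeping: determining which derivative pattern pairs up with which of the $\beta$-sensitive Jacquet modules, and verifying the precise $|\cdot|^{\pm 1/2}$-twists arising from the normalizations in \cite[\S 5]{GRS11}. In particular, one must correctly identify which summands in Part (2) are present only under $\delta_\beta = 1$ (the four middle terms, coming from derivative configurations activated by the square branch of the GRS formula) versus $\delta_\beta^0 = 1$ (the final $|\cdot|^{1/2}\otimes 1$ summand, coming from the non-square branch), and check that the remaining two terms (the $\xi$ and $\lambda_0|\cdot|^{1/2}$ lines with $\otimes 1$ in the last slot) contribute unconditionally, since they arise from the combination of the full derivative ${\tau'}^{(n)}$ with the unique unramified piece of $J_{\psi'_{0,\beta}}(\sigma)$ that survives in both cases.
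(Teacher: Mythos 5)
There is a genuine gap, and it traces back to your choice of induced model. You keep the generic block $1\times\lambda_0$ on the $\GL_{2n}$-side, working with $\Ind_{Q_{2n}(F)}^{\SO_{4n+3}(F)}\tau'\otimes\sigma$ where $\tau'$ has the irreducible generic principal series $1\times\lambda_0$ as a factor. Then $\tau'^{(n+1)}\neq 0$ (take the first derivative on each of the $n-1$ blocks $\mu_i(\det_{\GL_2})$ and the second derivative on $1\times\lambda_0$), and since $\sigma$ is generic, $J_{\mathrm{Wh},\pm}(\sigma)\neq 0$. Consequently \cite[Theorem 5.1 (1)]{GRS11} applied to your model does \emph{not} give vanishing at $\ell=n+2$ when $\beta\in(F^\times)^2$: the boundary term $\delta_\beta\cdot\ind\bigl(|\det(\cdot)|^{s}\tau'^{(n+1)}\otimes J_{\mathrm{Wh},\pm}(\sigma)\bigr)$ survives, so your assertion that ``any admissible partition would require a negative-degree derivative'' fails precisely at $\ell=n+2$. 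For the same reason, at $\ell=n+1$ the unconditional term $\ind\bigl(\tau'^{(n+1)}\otimes J_{\psi'_{0,\beta}}(\sigma)\bigr)$ is nonzero for both square and non-square $\beta$, so you cannot recover the factor $\delta_\beta^0$ in Part (1), i.e.\ you cannot show the constituent vanishes when $\beta$ is a square; your heuristic about the non-split target group forcing the last slot to be $1$ does not address why the \emph{split} case gives zero.

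The paper avoids both problems with a different decomposition and an extra idea your proposal lacks. It induces from $Q_{2n-2}$, with $\tau_1'=\mu_1(\det_{\GL_2})\times\cdots\times\mu_{n-1}(\det_{\GL_2})$ on $\GL_{2n-2}$ (so $\tau_1'^{(\ell)}=0$ for $\ell\geq n$), and absorbs the remaining $\GL_1$-blocks together with $\sigma$ into a \emph{non-generic} representation $\sigma_1$ of $\SO_7(F)$; non-genericity of $\sigma_1$ kills the Whittaker boundary terms, and the low derivative order of $\tau_1'$ gives the vanishing for $n+2\leq\ell\leq 2n$ at once. The $\delta_\beta^0$ in Part (1) then comes from realizing $\sigma_1$ in two ways, as $\Ind\,\wt\tau_1\otimes\mathbf{1}_{\SO_3}$ with $\wt\tau_1=\lambda_0|\cdot|^{1/2}\times\xi$ and as $\Ind\,\wt\tau_2\otimes\sigma_{\lambda_0}$ with $\wt\tau_2=|\cdot|^{1/2}\times\xi$: for square $\beta$ the unramified constituent must be a common subquotient of two unramified principal series whose parameter sets $\{\mu_1^{\pm},\ldots,\mu_{n-1}^{\pm},1\}$ and $\{\mu_1^{\pm},\ldots,\mu_{n-1}^{\pm},\lambda_0\}$ differ, hence is zero. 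Without moving the generic block off the $\GL$-side and without this comparison argument, the statement as formulated cannot be reached by the route you describe.
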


\begin{proof}
    By conjugation of some Weyl element, it suffices to consider the unramified constituent of the induced representation $\Ind_{Q_{2n-2}(F)}^{\SO_{4n+3}(F)}\tau_1'\otimes\sigma_1$, here
	$$\tau_1'=\Ind_{P_{2,\cdots,2,1,1}(F)}^{\GL_{2n}(F)}\mu_1(\det\!_{\GL_2})\otimes\cdots\otimes\mu_{n-1}(\det\!_{\GL_2})\,,$$
	and $\sigma_1=\Ind_{Q_2^{\SO_7}(F)}^{\SO_7(F)} \wt \tau_1 \otimes \mathbf{1}_{\SO_3}$, with $\wt \tau_1=\lambda_0|\cdot|^{1/2}\times \xi$ being a representation of $\GL_2(F)$.
	Note that $\sigma_1$ is {\it non-generic} and $\tau'^{(\ell)}=0$ for $\ell\geq n$. Applying \cite[Theorem 5.1 (1)]{GRS11} with $j=2n-2<\wt m=2n+1$, we see that if
	$n+2\leq \ell\leq 2n$, this twisted Jacquet module is always zero.
	
	Now we consider the case $\ell=n+1$. 
	Applying \cite[Theorem 5.1 (1)]{GRS11} with $j=2n-2<\wt m=2n+1$, we have 
		\[
	\begin{aligned}
	J_{\psi_{n+1,\beta}}(\Ind_{Q_{2n-2}(F)}^{\SO_{4n+3}(F)}\tau_1'\otimes\sigma_1)&\equiv \ind_{Q'_{n-3}(F)}^{\SO_{2n,\beta}(F)}|\det(\cdot)|^{-\frac{n-2}{2}}\tau'^{(n-1)}\otimes J_{\psi'_{2,\beta}}(\sigma_1)\,.
	\end{aligned}\]
	To calculate the Jacquet module $J_{\psi'_{2,\beta}}(\sigma_1)$, 
	applying \cite[Theorem 5.1 (1)]{GRS11} again with $j=2$ and $\ell=2$, we have 
	$$J_{\psi'_{2,\beta}}(\sigma_1)\equiv \ind_{Q_{0}'(F)}^{\SO_{2,\beta}(F)} |\cdot|^{-\frac{1}{2}}\wt\tau_1^{(2)}\otimes J_{\psi'_{0,\beta}}(\mathbf{1}_{\SO_3})\,.$$
	Hence by taking derivatives $\tau_1'^{(n-1)}$ and $\wt \tau_1^{(2)}$, we can see that any unramified constituent of $J_{\psi_{n+1,\beta}}(\Ind_{Q_{2n-2}(F)}^{\SO_{4n+3}(F)}\tau_1'\otimes\sigma_1)$ is a subquotient of
	\begin{equation}\label{equation 1 for Prop 3.3}
	\Ind_{B_{\SO_{2n,\beta}}(F)}^{\SO_{2n,\beta}(F)} \mu_1\otimes\cdots\otimes\mu_{n-1}\otimes 1\, .		
	\end{equation}
	On the other hand, we may also write $\sigma_1=\Ind_{Q_2^{\SO_7}(F)}^{\SO_7(F)} \wt \tau_2 \otimes \sigma_{\lambda_0}$, with $\wt \tau_2=|\cdot|^{1/2}\times \xi$ and $\sigma_{\lambda_0}$ is the unique irreducible quotient in $\Ind_{B_{\SO_3}(F)}^{\SO_3(F)}\lambda_0|\cdot|^{1/2}$. 
	Recall that $\lambda_0$ is a quadratic character, then $\sigma_{\lambda_0}$ can be viewed as a character of $\PGL_2(F)$ by composing it with the determinant, modulo squares. If $\beta\notin (F^\times)^2$, similarly we can see that any unramified constituent of $J_{\psi_{n+1,\beta}}(\Ind_{Q_{2n-2}(F)}^{\SO_{4n+3}(F)}\tau_1'\otimes\sigma_1)$ also lies in (\ref{equation 1 for Prop 3.3}). 
	If $\beta\in (F^\times)^2$, applying \cite[Theorem 5.1 (2)]{GRS11} with $j=1$ and $\ell=0$, we can see that  
	$$J_{\psi'_{0,\beta}}(\sigma_{\lambda_0})\equiv \lambda_0, $$
    and hence any unramified constituent of $J_{\psi_{n+1,\beta}}(\Ind_{Q_{2n-2}(F)}^{\SO_{4n+3}(F)}\tau_1'\otimes\sigma_1)$ is a subquotient of
	\begin{equation}\label{equation 2 for Prop 3.3}
	\Ind_{B_{\SO_{2n,\beta}}(F)}^{\SO_{2n,\beta}(F)} \mu_1\otimes\cdots\otimes\mu_{n-1}\otimes \lambda_0\, .
	\end{equation}
	Hence when $\beta\in (F^\times)^2$, any unramified constituent of $J_{\psi_{n+2,\beta}}(\pi_{\tau\otimes\sigma})$ is a subquotient of both (\ref{equation 1 for Prop 3.3}) and (\ref{equation 2 for Prop 3.3}). 
	If it is non-zero, then the two sets $\{\mu_1^{\pm}, \mu_2^{\pm}, \cdots, \mu_{n-1}^{\pm},1\}$ and $\{\mu_1^{\pm}, \mu_2^{\pm}, \cdots, \mu_{n-1}^{\pm},\lambda_0\}$ will be equal, which is impossible. 
	This shows that any irreducible constituent must be $0$, and this finishes the proof of Part (1) of the proposition.
	
	Finally we take $\ell=n$.
	Applying \cite[Theorem 5.1 (1)]{GRS11} with $j=2n-2<\wt m=2n+1$, we have 
	\begin{equation}\label{equation 3 for Prop 3.3}
	\begin{aligned}
	J_{\psi_{n,\beta}}(\Ind_{Q_{2n-1}(F)}^{\SO_{4n+3}(F)} \tau_1'\otimes \sigma_1)
	\equiv \, & \ind_{Q_n'(F)}^{\SO_{2n+2,\beta}(F)} |\det(\cdot)|^{\frac{3-n}{2}} \tau_1'^{(n-2)}\otimes J_{\psi'_{2,\beta}}(\sigma_1)\\
	&\oplus  \ind_{Q_{n-1}'(F)}^{\SO_{2n+2,\beta}(F)} |\det(\cdot)|^{\frac{2-n}{2}}\tau_1'^{(n-1)}\otimes J_{\psi'_{1,\beta}}(\sigma_1)\,.
	\end{aligned}
	\end{equation}
	Now, arguing as in the case of $\ell=n+1$, we can see that if an unramified constituent of $J_{\psi_{n,\beta}}(\Ind_{Q_{2n-1}(F)}^{\SO_{4n+3}(F)} \tau_1'\otimes \sigma_1)$ comes from the first summand of (\ref{equation 3 for Prop 3.3}), then it is a subquotient of 
	$$\begin{aligned}
	    &\quad \bigoplus_{t=1}^{n-1} \Ind_{P^{1,\cdots,1,2,1,\cdots,1}_{\SO_{2n+2,\beta}}(F)}^{\SO_{2n+2,\beta}(F)} \mu_1\otimes\cdots\otimes \mu_{t-1}\otimes\mu_{t}(\det\!_{\GL_2})\otimes\mu_{t+1}\otimes\cdots\otimes\mu_{n-1}\otimes 1\\
        &\quad \bigoplus_{t=1}^{n-1} \delta_\beta\cdot\Ind_{P^{1,\cdots,1,2,1,\cdots,1}_{\SO_{2n+2,\beta}}(F)}^{\SO_{2n+2,\beta}(F)} \mu_1\otimes\cdots\otimes \mu_{t-1}\otimes\mu_{t}(\det\!_{\GL_2})\otimes\mu_{t+1}\otimes\cdots\otimes\mu_{n-1}\otimes \lambda_0\,.
		\end{aligned}$$
	To compute the Jacquet module $J_{\psi'_{1,\beta}}(\sigma_1)$ in the second summand of (\ref{equation 3 for Prop 3.3}), we apply \cite[Theorem 5.1 (1)]{GRS11} with $j=2$ and $\ell=1$, and get 		
	$$J_{\psi'_{1,\beta}}(\sigma_1)\equiv \ind_{Q_{1}'(F)}^{\SO_{4,\beta}(F)} \wt\tau_1^{(1)}\otimes J_{\psi'_{0,\beta}}(\mathbf{1}_{\SO_3}).$$
	Moreover, one may also write $\sigma_1=\Ind_{Q_2^{\SO_7}(F)}^{\SO_7(F)} \wt \tau_2 \otimes \sigma_{\lambda_0}$ and get 
	$$J_{\psi'_{1,\beta}}(\sigma_1) \equiv \ind_{Q_{1}'(F)}^{\SO_{4,\beta}(F)} \wt\tau_2^{(1)}\otimes J_{\psi'_{0,\beta}}(\sigma_{\lambda_0}).$$
	Recall that we have $J_{\psi'_{0,\beta}}(\sigma_{\lambda_0})\equiv \lambda_0$ if $\beta\in (F^\times)^2$. 
	Then we obtain that, if an unramified constituent of $J_{\psi_{n,\beta}}(\Ind_{Q_{2n-1}(F)}^{\SO_{4n+3}(F)} \tau_1'\otimes \sigma_1)$ comes from the second summand of (\ref{equation 3 for Prop 3.3}), then it is a subquotient of 
			$$
		\begin{aligned}
        &\quad \Ind_{B_{\SO_{2n+2,\beta}}(F)}^{\SO_{2n+2,\beta}(F)} \mu_1\otimes \cdots \otimes\mu_{n-1}\otimes \xi\otimes  1\\ 
        &\quad \oplus \Ind_{B_{\SO_{2n+2,\beta}}(F)}^{\SO_{2n+2,\beta}(F)} \mu_1\otimes \cdots \otimes\mu_{n-1}\otimes \lambda_0|\cdot |^{\frac{1}{2}}\otimes  1\\
		&\quad \oplus \delta_\beta\cdot\bigg(\Ind_{B_{\SO_{2n+2,\beta}}(F)}^{\SO_{2n+2,\beta}(F)} \mu_1\otimes \cdots \otimes\mu_{n-1}\otimes \xi\otimes  \lambda_0 \\
		&\qquad \qquad \qquad \oplus \Ind_{B_{\SO_{2n+2,\beta}}(F)}^{\SO_{2n+2,\beta}(F)} \mu_1\otimes \cdots \otimes\mu_{n-1}\otimes |\cdot|^{\frac{1}{2}}\otimes  \lambda_0 \bigg)\\
		&\quad \oplus \delta_\beta^0\cdot\Ind_{B_{\SO_{2n+2,\beta}}(F)}^{\SO_{2n+2,\beta}(F)} \mu_1\otimes \cdots \otimes\mu_{n-1}\otimes |\cdot|^{\frac{1}{2}} \otimes 1 \,,
		\end{aligned}
		$$	
	as desired. 
\end{proof}

\medskip

We still need to consider the case that $\wt m=2n$, where $\sigma$ is an irreducible admissible representation of a non-split group $\SO_3^{V_0}(F)$ over $F$.
\begin{prop}\label{prop for unramified constituents II.2}
	Let $\tau$ be as in (\ref{unramified with non-trivial central character}), $\sigma$ be as above, and take $\beta\in F^\times$ such that $G_{2n-\ell+1,\beta}$ is quasi-split over $F$.
	Then any unramified constituent of the twisted Jacquet module 
	$J_{\psi_{\ell,\beta}}(\Ind_{P(F)}^{\SO^{V_0}_{4n+3}(F)} \tau|\cdot|^{1/2}\otimes\sigma)$ is zero for $n+2 \leq \ell\leq 2n-1$. Moreover, 
	the following hold. 
	\begin{enumerate}
		\item  When $\ell=n+1$, any unramified constituent of $J_{\psi_{n+1,\beta}}(\Ind_{P(F)}^{\SO^{V_0}_{4n+3}(F)} \tau|\cdot|^{1/2}\otimes\sigma)$ is a subquotient of
		$\Ind_{B_{\SO_{2n,\beta}}(F)}^{\SO_{2n,\beta}(F)} \mu_1\otimes\cdots\otimes\mu_{n-1}\otimes 1\,.$
		\item When $\ell=n$,
		any unramified constituent of $J_{\psi_{n,\beta}}(\Ind_{P(F)}^{\SO^{V_0}_{4n+3}(F)} \tau|\cdot|^{1/2}\otimes\sigma)$ is a subquotient of
		$$\begin{aligned}
		& \quad \ \Ind_{B_{\SO_{2n+2,\beta}}(F)}^{\SO_{2n+2,\beta}(F)} \mu_1\otimes\cdots\otimes\mu_{n-1}\otimes|\cdot|^{\frac{1}{2}}\otimes1\\
		& \oplus \Ind_{B_{\SO_{2n+2,\beta}}(F)}^{\SO_{2n+2,\beta}(F)} \mu_1\otimes\cdots\otimes
		\mu_{n-1}\otimes\lambda_0|\cdot|^{\frac{1}{2}}\otimes1 \\
		&\bigoplus_{t=1}^{n-1} \Ind_{P^{1,\cdots,1,2,1,\cdots,1}_{\SO_{2n+2,\beta}}(F)}^{\SO_{2n+2,\beta}(F)} \mu_1\otimes\cdots\otimes \mu_{t-1}\otimes\mu_{t}(\det\!_{\GL_2})\otimes\mu_{t+1}\otimes\cdots\otimes\mu_{n-1}\otimes 1\,.
		\end{aligned}$$
	\end{enumerate}
\end{prop}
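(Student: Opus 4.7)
The plan is to mirror the strategy of Proposition \ref{prop for unramified constituents II.1}, with the only structural change being that the ambient group $H=\SO_{4n+3}^{V_0}$ is now non-split (Witt index $\wt m=2n$), so one invokes part (2) of \cite[Theorem 5.1]{GRS11} in place of part (1), and $\sigma$ is a finite-dimensional representation of the anisotropic group $\SO_3^{V_0}(F)$. First I would apply a Weyl conjugation to rewrite
$\Ind_{P(F)}^{\SO^{V_0}_{4n+3}(F)} \tau|\cdot|^{1/2}\otimes\sigma$
as an induced representation of the form $\Ind_{Q_{2n-2}(F)}^{\SO^{V_0}_{4n+3}(F)}\tau_1'\otimes\sigma_1$, where
$\tau_1'=\Ind_{P_{2,\cdots,2}(F)}^{\GL_{2n-2}(F)} \mu_1(\det\!_{\GL_2})\otimes\cdots\otimes\mu_{n-1}(\det\!_{\GL_2})$
and $\sigma_1=\Ind_{Q_2^{\SO_7^{V_0}}(F)}^{\SO_7^{V_0}(F)} \wt\tau\otimes\sigma$, with $\wt\tau$ an appropriate representation of $\GL_2(F)$ absorbing the $1\times\lambda_0$ block of \eqref{unramified with non-trivial central character} (after the standard $|\cdot|^{1/2}$-shift).

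With this reformulation in hand, I would apply \cite[Theorem 5.1(2)]{GRS11} with $j=2n-2<\wt m=2n$. Because the Bernstein--Zelevinsky derivative $\tau_1'^{(k)}$ vanishes for $k\geq n$, every term in the resulting tower formula is forced to vanish as soon as $\ell\geq n+2$, giving the stated vanishing for $n+2\leq\ell\leq 2n-1$. The factor $\delta^1_{\SO^{V_0}_{4n+3},\beta}$ built into part (2) of that theorem is precisely the mechanism pinning $\beta$ to those values for which the target group $G_{2n-\ell+1,\beta}$ is quasi-split, consistent with the hypothesis.

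For $\ell=n+1$, only one layer of the tower formula survives, pairing $\tau_1'^{(n-1)}$ with a second-layer Jacquet module of $\sigma_1$. A further application of \cite[Theorem 5.1(2)]{GRS11} to the inducing datum of $\sigma_1$ reduces the inner Jacquet module to the restriction of $\sigma$ along an anisotropic torus inside $\SO_{2,\beta}^{V_0}(F)$, which at unramified places contributes only the trivial character. Taking the $(n-1)$-th derivative of $\tau_1'$ then produces exactly the induced module stated in part (1). For $\ell=n$, two layers contribute, namely those pairing $\tau_1'^{(n-2)}\otimes J_{\psi'_{2,\beta}}(\sigma_1)$ and $\tau_1'^{(n-1)}\otimes J_{\psi'_{1,\beta}}(\sigma_1)$; the first generates the sum over $t$ of segment-induced pieces from $P^{1,\cdots,1,2,1,\cdots,1}_{\SO_{2n+2,\beta}}$, while the second, after one more decomposition of $J_{\psi'_{1,\beta}}(\sigma_1)$ via the theorem and computation of $\wt\tau^{(1)}$, yields the two principal-series contributions carrying $|\cdot|^{1/2}$ and $\lambda_0|\cdot|^{1/2}$ at the last slot.

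The main obstacle, I expect, will be explaining the disappearance (relative to Proposition \ref{prop for unramified constituents II.1}) of the $\delta_\beta$ and $\delta_\beta^0$ summands carrying $\lambda_0$ at the last slot. Two reasons should combine to rule them out: first, since $\sigma$ is a representation of the anisotropic $\SO_3^{V_0}(F)$, the alternative presentation of $\sigma_1$ used in the split case (via $\sigma_{\lambda_0}$) is unavailable, so no $\lambda_0$-twisted Jacquet contribution can arise from restricting $\sigma$ to a subtorus; second, the quasi-split constraint on $\beta$ collapses several split/non-split dichotomies that were separate contributions in Proposition \ref{prop for unramified constituents II.1} into a single piece. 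Carrying out these character restriction computations carefully on the anisotropic $\SO_3^{V_0}$, and matching the surviving terms against the explicit list in parts (1) and (2), is the technical heart of the argument.
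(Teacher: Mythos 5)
Your proposal follows essentially the same route as the paper's proof: the same Weyl conjugation to $\Ind_{Q_{2n-2}(F)}^{\SO^{V_0}_{4n+3}(F)}\tau_1'\otimes\sigma_1$ with $\sigma_1$ induced from $Q_2^{\SO_7^{V_0}}$ and $\tau_2=\lambda_0|\cdot|^{1/2}\times|\cdot|^{1/2}$, the same application of \cite[Theorem 5.1]{GRS11} giving vanishing for $\ell\geq n+2$ from $\tau_1'^{(k)}=0$ ($k\geq n$), and the same layer-by-layer analysis identifying $\tau_1'^{(n-1)}\otimes J_{\psi'_{2,\beta}}(\sigma_1)$ at $\ell=n+1$ and the two summands $\tau_1'^{(n-2)}\otimes J_{\psi'_{2,\beta}}(\sigma_1)$, $\tau_1'^{(n-1)}\otimes J_{\psi'_{1,\beta}}(\sigma_1)$ at $\ell=n$, with the inner Jacquet modules of $\sigma_1$ reducing to restrictions of $\sigma$ to anisotropic subgroups; your explanation for the absence of the $\lambda_0$-at-the-last-slot terms is consistent with this. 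The proposal is correct and matches the paper's argument.
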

\begin{proof}
    As before, it suffices to consider the unramified constituent of the induced representation $\Ind_{Q_{2n-2}(F)}^{\SO^{V_0}_{4n+3}(F)}\tau_1'\otimes\sigma_1$, here
	$$\tau_1'=\Ind_{P_{2,\cdots,2}(F)}^{\GL_{2n}(F)}\mu_1(\det\!_{\GL_2})\otimes\cdots\otimes\mu_{n-1}(\det\!_{\GL_2})\,.$$
	and $\sigma_1=\Ind_{Q_2^{\SO^{V_0}_7}(F)}^{\SO^{V_0}_7(F)}\tau_2\otimes\sigma$ with $\tau_2=\lambda_0|\cdot|^{1/2}\times |\cdot|^{1/2}$. Note that $\sigma_1$ is non-generic. 
    Applying \cite[Theorem 5.1 (2)]{GRS11} with $j=2n-2$, we can see that this twisted Jacquet module is zero if $n+2\leq \ell\leq 2n-1$, since ${\tau_1'}^{(\ell)}=0$ for $\ell\geq n$. 
    
    When $\ell=n+1$, also by \cite[Theorem 5.1 (1)]{GRS11}, we have 
    $$J_{\psi_{n+1,\beta}}(\Ind_{P(F)}^{\SO^{V_0}_{4n+3}(F)} \tau_1'\otimes\sigma_1)\equiv \ind_{Q_\beta'(F)}^{\SO_{2n,\beta}(F)} |\det(\cdot)|^{-\frac{n}{2}}{\tau_1'}^{(n-1)}\otimes J_{\psi'_{2,v_\beta}}(\sigma_1)\,.$$
    Note that we have assumed that $G_{2n-\ell+1,\beta}$ is $F$-quasi-split.
    Consider the Jacquet module $J_{\psi'_{2,v_\beta}}(\sigma_1)$. Applying \cite[Theorem 5.1 (2)]{GRS11} with $\ell=2$ and $j=2$, we have 
    $$J_{\psi'_{2,v_\beta}}(\sigma_1)\equiv \ind_{Q^{\SO_7^{V_0}\,\prime}_\beta(F)}^{\SO_{2,\beta}(F)}|\cdot|^{-1} \tau_2^{(2)}\otimes J_{\psi'_{0,v_\beta}}(\sigma).$$
    The last Jacquet module is just a restriction to the anisotropic $\SO_{2,\beta}(F)$. This proves Part (1) of the proposition. 
    
    Finally we take $\ell=n$. Applying \cite[Theorem 5.1 (1)]{GRS11} with $j=2n-2$, we have 
    $$\begin{aligned}
	J_{\psi_{n,\beta}(F)}(\Ind_{P(F)}^{\SO^{V_0}_{4n+3}(F)} \tau'\otimes\sigma)
	&\equiv \ind_{Q'_n(F)}^{\SO_{2n+2,\beta}(F)} |\det(\cdot)|^{\frac{2-n}{2}} \tau_1'^{(n-1)}\otimes J_{\psi'_{1,\beta}}(\sigma_1)\\
	&\oplus \ind_{Q_\beta'(F)}^{\SO_{2n+2,\beta}(F)} |\det(\cdot)|^{\frac{3-n}{2}}{\tau_1'}^{(n-2)}\otimes J_{\psi'_{2,v_\beta}}(\sigma_1).
	\end{aligned}$$
    We still need to calculate the Jacquet module $J_{\psi'_{1,\beta}}(\sigma_1)$. Applying \cite[Theorem 5.1 (2)]{GRS11} again with $j=2$ and $\ell=1$, we have 
    $$J_{\psi'_{1,\beta}}(\sigma_1)\equiv \left(\ind_{Q^{\SO_7^{V_0}\, \prime}_1(F)}^{\SO_{4,\beta}(F)} |\cdot|^{-\frac{1}{2}}(\tau_2)_{(1)}\otimes \sigma \right)  \oplus\left(\ind_{Q^{\SO_7^{V_0}\,\prime}_\beta(F)}^{\SO_{4,\beta}(F)}\tau_2^{(1)}\otimes J_{\psi'_{0,v_\beta}}(\sigma)\right).$$
    Then, granting the above calculation of $J_{\psi'_{2,v_\beta}}(\sigma_1)$, we obtain Part (2) of the proposition by considering the unramified constituents. 
\end{proof}

\medskip

We consider moreover the special case that $\ell=\wt m$, and $w_0\in W$. The result is the following: 

\begin{prop}\label{w_0 in W}
	Let $\tau$ and $\sigma$ be the same as in any case of Propositions \ref{prop for unramified constituents I.1}--\ref{prop for unramified constituents II.2}, and let $w_0\in W$. Then we have
	$J_{\psi_{\wt m, w_0}}(\pi_{\tau\otimes\sigma})=0\,.$
\end{prop}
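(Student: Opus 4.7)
The proof proposal is to apply \cite[Theorem 5.1]{GRS11} once more, specialized to the boundary case $\ell=\wt m$ with $w_0\in W=V^{(\wt m)}$. As in the proofs of Propositions \ref{prop for unramified constituents I.1}--\ref{prop for unramified constituents II.2}, I would first conjugate $\pi_{\tau\otimes\sigma}$ by a suitable Weyl element so that it is replaced by the unramified constituent of $\Ind_{Q_{2n}(F)}^{H(F)}\tau'\otimes\sigma$ (in Case I) or $\Ind_{Q_{2n-2}(F)}^{H(F)}\tau_1'\otimes\sigma_1$ (in Case II), where $\tau'=\mu_1(\det_{\GL_2})\times\cdots\times\mu_n(\det_{\GL_2})$ and $\tau_1'=\mu_1(\det_{\GL_2})\times\cdots\times\mu_{n-1}(\det_{\GL_2})$. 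The crucial arithmetic input is the vanishing of Bernstein--Zelevinsky derivatives: $\tau'^{(k)}=0$ for $k>n$, and $\tau_1'^{(k)}=0$ for $k>n-1$.

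Next I would expand the twisted Jacquet module $J_{\psi_{\wt m,w_0}}(\Ind \tau'\otimes\sigma)$ using \cite[Theorem 5.1]{GRS11}, using part (1) of that theorem when $\wt m=2n+1$ and part (2) when $\wt m=2n$. Because $w_0\in W$ is anisotropic and belongs to the anisotropic kernel (rather than being an element of the form $y_\beta\in V^{(\ell)}$ used before), the only summands that survive in the expansion are those of the ``anisotropic type,'' which are parabolic inductions of the form $\ind\bigl(|\det(\cdot)|^{*}\tau'^{(k)}\otimes J_{\psi'}(\sigma)\bigr)$ for a range of indices $k$ determined by $\ell=\wt m$ and the size of the GL-block cut out by the conjugation.

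The heart of the argument is then an index check: I would verify that, for every $k$ occurring as a derivative-degree in this expansion, one has $k\geq n+1$ in Case I and $k\geq n$ in Case II. Given $\wt m\in\{2n,2n+1\}$, this follows from the fact that the shifts built into the formula of \cite[Theorem 5.1]{GRS11} push the degree of the required derivative up to at least $\wt m-j+1$ where $j\in\{2n,2n-2\}$ is the size of the GL-block, which already exceeds the critical thresholds $n$ and $n-1$. Combined with $\tau'^{(k)}=0$ for $k>n$ and $\tau_1'^{(k)}=0$ for $k>n-1$, every summand vanishes, and hence $J_{\psi_{\wt m,w_0}}(\pi_{\tau\otimes\sigma})=0$.

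The main obstacle I anticipate is bookkeeping at this boundary: one must carefully distinguish the $w_0\in W$ case from the $w_0=y_\beta$ case used in the earlier propositions, since the recursion of \cite[Theorem 5.1]{GRS11} produces different families of summands depending on whether $w_0$ lies in the isotropic or anisotropic part of $V^{(\ell)}$. Once the correct family of surviving summands is identified, the derivative-vanishing is routine, and the result follows.
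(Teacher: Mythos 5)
Your overall strategy --- conjugate to a standard module and kill every term via the vanishing of Bernstein--Zelevinsky derivatives of the non-generic $\tau'$ --- is the right idea, and it is also what drives the paper's proof. But there are two concrete problems with your execution. First, the case $w_0\in W$ at the maximal depth $\ell=\wt m$ is not covered by parts (1) and (2) of \cite[Theorem 5.1]{GRS11}: those parts compute Jacquet modules for the characters built from vectors of the form $y_\beta=e_{j'}+\frac{\beta}{2}e_{-j'}$, whereas the anisotropic-kernel case is exactly what part (3) of that theorem is for. Your assertion that ``the only summands that survive are those of the anisotropic type'' is precisely the content you would need to justify, and you do not justify it. The paper simply invokes part (3), which gives $J_{\psi_{\wt m,w_0}}(\Ind_{P(F)}^{H(F)}(\tau'\otimes\sigma))=d_{\tau'}\cdot J_{\psi_{\wt m-2n}}(\sigma)$ with $d_{\tau'}$ the dimension of the space of $\psi$-Whittaker functionals on $\tau'$, and then observes that $d_{\tau'}=0$.

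Second, your index check is numerically wrong. You claim the derivative degrees forced to appear are at least $\wt m-j+1$; with $j=2n$ and $\wt m=2n+1$ this equals $2$, which does not exceed your stated threshold $n+1$ once $n\ge 2$, so the claimed vanishing does not follow from the bound as written. The correct constraint at $\ell=\wt m$ is that the residual Bessel depth landing on $\SO(V^{(j)})$ is at most its Witt index $\wt m-j$, which forces the derivative degree to be at least $\ell-(\wt m-j)=j$, i.e., the full $j$-th derivative of the $\GL_j$-block --- exactly the Whittaker functional whose dimension is the $d_{\tau'}$ of part (3). With that corrected bound the argument does close, since $\tau'^{(2n)}=0$ because each factor $\mu_i(\det\!_{\GL_2})$ is non-generic; but as written your proof does not establish the vanishing. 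A minor further point: in Case II you induce from $Q_{2n-2}$ with $\sigma_1$ on $\SO_7$, which would require iterating the whole computation on $\sigma_1$; the paper avoids this by keeping the full $\GL_{2n}$-block $\tau'=\mu_1(\det\!_{\GL_2})\times\cdots\times\mu_{n-1}(\det\!_{\GL_2})\times\lambda_0|\cdot|^{1/2}\times|\cdot|^{1/2}$, which is still non-generic.
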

\begin{proof}
	As before, conjugating by some Weyl element, it suffices to consider the unramified constituent of $\displaystyle{J_{\psi_{\wt m, w_0}}(\Ind_{P(F)}^{H(F)}(\tau'\otimes\sigma))}$,
	where
	$$\tau'=\Ind_{P_{2,\cdots,2}}^{\GL_{2n}(F)}\mu_1(\det\!_{\GL_2})\otimes\cdots\otimes\mu_{n}(\det\!_{\GL_2})\,,$$
	or
	$$\tau'=\Ind_{P_{2,\cdots,2,1,1}(F)}^{\GL_{2n}(F)}\mu_1(\det\!_{\GL_2})\otimes\cdots\otimes\mu_{n-1}(\det\!_{\GL_2})\otimes\lambda_0|\cdot|^{1/2}\otimes|\cdot|^{1/2}\,.$$
	By \cite[Theorem 5.1 (3)]{GRS11}, we have
	$$J_{\psi_{\wt m, w_0}}(\Ind_{P(F)}^{H(F)}(\tau'\otimes\sigma))=d_{\tau'}\cdot J_{\psi_{\wt m-2n}}(\sigma)\,,$$
	where $d_{\tau'}$ is the dimension of the space of $\psi$-Whittaker functionals on $\tau'$.
	By the construction of $\tau'$, we can see that $d_{\tau'}=0$, and the proposition follows.
\end{proof}

\bigskip

At the end of this section, we consider an additional case which will be used in the proof of Proposition \ref{prop for cuspidality} later. In this case, we consider the induced representation 
$$\Ind_{R_{2n-1}}^{\SO_{4n+1}(F)} \tau_1|\cdot|^{1/2}\otimes\sigma$$
of the $F$-split group $\SO_{4n+1}(F)$.  
Here $R_{2n-1}\subset \SO_{4n+1}$ is the maximal parabolic subgroup whose Levi subgroup is isomorphic to $\GL_{2n-1}\times \SO_3$, $\sigma=\Ind_{B_{\SO_3}(F)}^{\SO_3(F)} \xi$ with $\xi$ being an unramified character of $F^\times$, and
\begin{equation}\label{unramified for the case 2n-1}
\tau_1=\mu_1\times\cdots\times \mu_{n-1} \times\lambda \times \mu_{n-1}^{-1}\times\cdots \times \mu_1^{-1}\,,
\end{equation} 
with $\mu_i$'s and $\lambda$ being unramified characters of $F^\times$ and in addition $\lambda$ being quadratic. 
For $1 \leq \ell\leq 2n$, let $N_\ell^\sharp\subset \SO_{4n+1}$ and $\psi_{\ell,\beta}^\sharp: N_\ell^\sharp(F)\lra \BC^\times$ be similar to  $N_\ell\subset H$ and $\psi_{\ell,\beta}: N_\ell(F)\lra \BC^\times$ in previous parts of this section. 
Now let $\pi_{\tau_1\otimes\sigma}$ be the unramified constituent of $\Ind_{R_{2n-1}}^{\SO_{4n+1}(F)} \tau_1|\cdot|^{1/2}\otimes\sigma$, we are going to consider the twisted Jacquet module 
$J_{\psi^\sharp_{\ell,\beta}}(\pi_{\tau_1\otimes\sigma})\, .$

\begin{prop}\label{prop for unramified constituents: additional}
 The following hold:
 	\begin{enumerate}
		\item Suppose that $\beta\in F^\times-{(F^\times)}^2$. Then any twisted Jacquet module $J_{\psi^\sharp_{\ell,\beta}}(\pi_{\tau_1\otimes\sigma})=0$ for all $n \leq \ell\leq 2n$. 		
	    \item Suppose that $\beta \in {(F^\times)}^2$. Then any twisted Jacquet module $J_{\psi^\sharp_{\ell,\beta}}(\pi_{\tau_1\otimes\sigma})=0$ for all $n+1 \leq \ell\leq 2n$. When $\ell=n$, any unramified constituent of $J_{\psi^\sharp_{n+1,\beta}}(\pi_{\tau_1\otimes\sigma})$ is a subquotient of
		$\Ind_{B_{\SO_{2n}}(F)}^{\SO_{2n}(F)} \mu_1\otimes\cdots\otimes\mu_{n-1}\otimes \lambda\,.$
    \end{enumerate}
\end{prop}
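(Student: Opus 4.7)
The argument parallels the proofs of Propositions \ref{prop for unramified constituents I.1}--\ref{prop for unramified constituents II.2}, based on iterated application of \cite[Theorem 5.1]{GRS11}. The plan is as follows.

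First, by conjugating by a suitable Weyl element, it suffices to compute the unramified constituents of the twisted Jacquet modules of $\Ind_{Q}^{\SO_{4n+1}(F)}(\tau_1'\otimes\sigma)$, where
$$\tau_1'=\Ind_{P_{2,\ldots,2,1}(F)}^{\GL_{2n-1}(F)}\mu_1(\det\!_{\GL_2})\otimes\cdots\otimes\mu_{n-1}(\det\!_{\GL_2})\otimes\lambda,$$
and $Q$ is the standard parabolic in $\SO_{4n+1}$ with Levi $\GL_{2n-1}\times\SO_3$. Since the inducing data are unramified and $\pi_{\tau_1\otimes\sigma}$ is the unique unramified constituent of the original induction, this reduction does not affect the unramified constituents we wish to study.

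Next, I would apply \cite[Theorem 5.1 (1)]{GRS11} with $j=2n-1$ to expand $J_{\psi^\sharp_{\ell,\beta}}(\Ind_Q^{\SO_{4n+1}(F)}\tau_1'\otimes\sigma)$ as a sum of induced representations involving the Bernstein--Zelevinsky derivatives $\tau_1'^{(k)}$ paired with smaller twisted Jacquet modules of $\sigma$. The crucial vanishing fact, analogous to the one used in the proofs of Propositions \ref{prop for unramified constituents I.1} and \ref{prop for unramified constituents II.1}, is that $\tau_1'^{(k)}=0$ for $k\geq n$, since $\tau_1'$ is built from $n-1$ blocks of the form $\mu_i(\det_{\GL_2})$ together with one $\GL_1$-character $\lambda$. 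This vanishing immediately forces $J_{\psi^\sharp_{\ell,\beta}}=0$ for $\ell\geq n+1$ in both cases. When $\beta\notin(F^\times)^2$, the only remaining term at $\ell=n$ in the formula carries an indicator factor $\delta_\beta=0$ (because the relevant surviving induction requires $\beta$ to be a square to match the quasi-split form), yielding part (1).

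For part (2) at $\ell=n$ with $\beta\in(F^\times)^2$, \cite[Theorem 5.1 (1)]{GRS11} reduces the calculation to the parabolic induction to $\SO_{2n,\beta}(F)=\SO_{2n}(F)$ of $|\det(\cdot)|^{\frac{1-n}{2}}\tau_1'^{(n-1)}\otimes J_{\psi'_{0,\beta}}(\sigma)$. After normalization, the top derivative equals $\mu_1\times\cdots\times\mu_{n-1}\times\lambda$ as a representation of the maximal torus of $\GL_{n-1}\times\GL_1$, since each block $\mu_i(\det_{\GL_2})$ contributes $\mu_i$ at derivative level one and the $\GL_1$-block $\lambda$ contributes itself at derivative level zero. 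Combined with the observation that $J_{\psi'_{0,\beta}}(\sigma)$ contributes no additional unramified data beyond what is already captured, parabolic induction then yields the stated subquotient of $\Ind_{B_{\SO_{2n}}(F)}^{\SO_{2n}(F)}\mu_1\otimes\cdots\otimes\mu_{n-1}\otimes\lambda$.

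The principal technical obstacle will be tracking the quadratic character $\lambda$ sitting in the central slot of $\tau_1$ through both the Weyl conjugation and the derivative bookkeeping, in order to ensure it lands precisely in the final slot of the inducing data. This requires care with modular characters analogous to the treatment of $\lambda_0$ in the proofs of Propositions \ref{prop for unramified constituents II.1} and \ref{prop for unramified constituents II.2}. Since $\lambda$ is unramified and quadratic, no dependence on the Satake parameter $\xi$ of $\sigma$ survives in the final unramified answer, which is consistent with the stated form of the subquotient.
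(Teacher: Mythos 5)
Your overall strategy (Weyl conjugation followed by iterated application of \cite[Theorem 5.1]{GRS11}) is the right one, but your reduction step diverges from the paper's in a way that breaks the key vanishing claim. You keep the quadratic character $\lambda$ inside the general linear factor, taking $\tau_1'=\mu_1(\det_{\GL_2})\times\cdots\times\mu_{n-1}(\det_{\GL_2})\times\lambda$ on $\GL_{2n-1}$, and then assert that $\tau_1'^{(k)}=0$ for $k\geq n$. That assertion is false: by the Leibniz rule each block $\mu_i(\det_{\GL_2})$ admits a nonzero first derivative and the $\GL_1$-factor $\lambda$ also admits a nonzero first derivative (the trivial representation of $\GL_0$), so $\tau_1'^{(n)}\neq 0$ (differentiate every factor once); the correct vanishing is only $\tau_1'^{(k)}=0$ for $k\geq n+1$. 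With this, the GRS formula at depth $\ell=n+1$ retains the term $\ind(\,\cdot\,)\tau_1'^{(n)}\otimes J_{\mathrm{Wh}}(\sigma)$, which is nonzero because $\sigma=\Ind_{B_{\SO_3}}^{\SO_3}\xi$ is generic, so you cannot conclude the vanishing claimed in part (2) for $\ell=n+1$; similarly at $\ell=n$ the term $\ind(\,\cdot\,)\tau_1'^{(n)}\otimes J_{\psi'_{0,\beta}}(\sigma)$ carries no $\delta_\beta$ factor and does not vanish, so part (1) does not follow either. Your appeal to ``an indicator factor $\delta_\beta=0$'' misidentifies where the square/non-square dichotomy actually enters.

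The paper avoids all of this by absorbing the middle character into the inner orthogonal group: it writes the induced representation with $\tau_1'=\mu_1(\det_{\GL_2})\times\cdots\times\mu_{n-1}(\det_{\GL_2})$ on $\GL_{2n-2}$ (so that $\tau_1'^{(k)}=0$ genuinely holds for $k\geq n$) and $\sigma'=\Ind_{P^1_{\SO_5}(F)}^{\SO_5(F)}\xi\otimes\sigma_\lambda$, where $\sigma_\lambda$ is the non-generic unramified quotient of $\Ind_{B_{\SO_3}(F)}^{\SO_3(F)}\lambda|\cdot|^{1/2}$. Then the terms that would survive for $\ell\geq n+1$ require the Whittaker-type Jacquet module of the non-generic $\sigma'$ and hence vanish, while at $\ell=n$ the only surviving term is $\ind(\,\cdot\,)\tau_1'^{(n-1)}\otimes J_{\psi^{\sharp\,\prime}_{1,\beta}}(\sigma')$ with $J_{\psi^{\sharp\,\prime}_{1,\beta}}(\sigma')\equiv J_{\psi^{\sharp\,\prime}_{0,\beta}}(\sigma_\lambda)$, which equals $\lambda$ if $\beta\in(F^\times)^2$ and $0$ otherwise --- this is exactly what produces the dichotomy between parts (1) and (2) and the clean subquotient $\Ind_{B_{\SO_{2n}}(F)}^{\SO_{2n}(F)}\mu_1\otimes\cdots\otimes\mu_{n-1}\otimes\lambda$. (Note also that even in your setup the derivative $\tau_1'^{(n-1)}$ is not simply $\mu_1\times\cdots\times\mu_{n-1}\times\lambda$ but a sum including terms with a surviving $\GL_2$-block, which would contaminate your final answer.) To repair your argument you must redo the reduction so that $\lambda|\cdot|^{1/2}$ is paired with the $\SO_3$-part rather than left as a $\GL_1$-factor.
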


\begin{proof}
    The calculation is similar to those in other cases. 
    By conjugation of some Weyl element, it suffices to consider the unramified constituent of the induced representation $\Ind_{R_{2n-2}(F)}^{\SO_{4n+1}(F)}\tau_1'\otimes\sigma'$ instead of $\pi_{\tau\otimes\sigma}$, here
	$$\tau_1'=\Ind_{P_{2,\cdots,2}(F)}^{\GL_{2n-2}(F)}\mu_1(\det\!_{\GL_2})\otimes\cdots\otimes\mu_{n-1}(\det\!_{\GL_2})\,,$$
	and $\sigma'=\Ind_{P_{\SO_5}^1(F)}^{\SO_5(F)}\xi\otimes \sigma_\lambda$. As before, we have deonted by $\sigma_{\lambda}$ the unique irreducible quotient of  $\Ind_{B_{\SO_3}(F)}^{\SO_3(F)}\lambda|\cdot|^{1/2}$, which can be viewed as a character of $\PGL_2(F)$ by composing it with the determinant, modulo squares.
	Note that $\sigma'$ is non-generic and the derivative $\tau_1'^{(\ell)}$ of $\tau_1'$ vanishes for $\ell\geq n$. Now applying \cite[Theorem 5.1 (i)]{GRS11} with $j=2n-2$, we can see that
	the corresponding twisted Jacquet module vanishes for all $n+1 \leq \ell\leq 2n$.
	
	When $\ell=n$, by the formula in \cite[Theorem 5.1 (i)]{GRS11} we have
	$$\begin{aligned}
	&\quad\ \ J_{\psi^\sharp_{n,\beta}}(\Ind_{R_{2n-2}(F)}^{\SO_{4n+1}(F)}\tau_1'\otimes\sigma')\equiv \ind_{R'_{n-1}(F)}^{\SO_{2n,\beta}(F)}\lvert\cdot\rvert^{\frac{2-n}{2}}\tau_1'^{(n-1)}\otimes J_{{\psi^{\sharp\,\prime}_{1,\beta}}}(\sigma')\, .
	\end{aligned}$$
	Now we consider $J_{{\psi^{\sharp\,\prime}_{1,\beta}}}(\sigma')$. Applying \cite[Theorem 5.1 (i)]{GRS11} again with $\ell=1$ and $j=1$, we get 
	$$J_{{\psi^{\sharp\,\prime}_{1,\beta}}}(\sigma')\equiv J_{{\psi^{\sharp\,\prime}_{0,\beta}}}(\sigma_\lambda)\, ,$$ 
	which is zero unless $\beta\in (F^\times)^2$, in which case we have $J_{{\psi^{\sharp\,\prime}_{0,\beta}}}(\sigma_\lambda)\equiv \lambda$. Then the desired results follow.	
\end{proof}

\section{The cuspidality of the descent}\label{cuspidality}

In the section, we show the cuspidality of the descent representation. We come back to the global settings and start with the following lemma, which is a direct consequence of the local calculations in the previous section (Propositions \ref{prop for unramified constituents I.1} and \ref{w_0 in W}). Throughout this section, we let $\tau$ and $\sigma$ be as in \S \ref{subsection: residual rep}. 

\begin{lem}\label{vanishing depth}
The Bessel-Fourier coefficients $f^{\psi_{\ell,w_0}}$ vanish for any anisotropic vector $w_0\in V^{(\ell)}$ and $f\in \CE_{\tau\otimes\sigma}$ while $n+2 \leq \ell \leq \wt m$.
\end{lem}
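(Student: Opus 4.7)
The plan is to reduce the global vanishing to the local twisted Jacquet module computations carried out in Section~3, following the standard local--global strategy for Fourier coefficients of automorphic representations.

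First, I would reduce to the standard test vectors $w_0 = y_\beta$ used in Section~3. For $\ell<\wt m$, the orthogonal complement $V^{(\ell)}$ has positive Witt index, so by Witt's extension theorem every anisotropic $w_0\in V^{(\ell)}$ of norm $\beta=\pair{w_0,w_0}$ lies in the $\SO(V^{(\ell)})(F)$-orbit of $y_\beta=e_{2n}+\tfrac{\beta}{2}e_{-2n}$. Since $\SO(V^{(\ell)})\subset M'_\ell$ normalizes $N_\ell$, conjugation by an element $\gamma\in\SO(V^{(\ell)})(F)$ carrying $w_0$ to $y_\beta$ transports $\psi_{\ell,w_0}$ to $\psi_{\ell,\beta}$, so $f^{\psi_{\ell,w_0}}(h)=f^{\psi_{\ell,\beta}}(\gamma h)$ for all $f\in\CE_{\tau\otimes\sigma}$. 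Thus vanishing for $\psi_{\ell,\beta}$ is equivalent to vanishing for $\psi_{\ell,w_0}$. The remaining case $\ell=\wt m$, where $V^{(\wt m)}=W$ and no reduction to $y_\beta$ is available, is handled directly by Proposition \ref{w_0 in W}.

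Next, I would invoke the usual local--global principle. Since $\CE_{\tau\otimes\sigma}$ is irreducible, if the period $f^{\psi_{\ell,\beta}}$ does not vanish identically, then at every place $v$ the local twisted Jacquet module $J_{\psi_{\ell,\beta,v}}(\CE_{\tau\otimes\sigma,v})$ must be nonzero. At almost every finite place, $\CE_{\tau\otimes\sigma,v}$ is an unramified constituent of $\Ind_{P(F_v)}^{H(F_v)}(\tau_v|\cdot|^{1/2}\otimes\sigma_v)$ with $\tau_v$, $\sigma_v$ spherical; moreover, for a fixed global $\beta\in F^\times$ the local group $G_{2n-\ell+1,\beta}(F_v)$ is quasi-split at almost all $v$. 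Combining these observations, it suffices to exhibit a single unramified place $v$ (among this density-one set) at which the local unramified twisted Jacquet module is zero.

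That local vanishing is provided by Section~3. Depending on whether $\wt m=2n+1$ or $\wt m=2n$ and whether $\omega_\tau$ is trivial or not, one applies one of Propositions \ref{prop for unramified constituents I.1}, \ref{prop for unramified constituents I.2}, \ref{prop for unramified constituents II.1}, \ref{prop for unramified constituents II.2}. Inspecting the statements, each of them shows that any unramified constituent of $J_{\psi_{\ell,\beta}}(\pi_{\tau\otimes\sigma})$ is zero for all $\ell$ in the range $n+2\leq \ell\leq \wt m-1$, independently of the square class of $\beta$. Together with Proposition \ref{w_0 in W} for the endpoint $\ell=\wt m$, this covers the entire range $n+2\leq\ell\leq\wt m$ in the statement.

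The main obstacle, and what forces the slightly delicate case split in Section~3, is to ensure that the vanishing range $\ell\geq n+2$ is uniform across all four possibilities for $(\wt m,\omega_\tau)$ and both square classes of $\beta$; in particular, the appearance of a non-vanishing Jacquet module at depth $\ell=n+1$ when $\beta\in(F^\times)^2$ in Proposition \ref{prop for unramified constituents I.1}(2) shows that the lower bound $n+2$ in the lemma is sharp for the unramified-constituent argument, which is precisely why the first-occurrence analysis in later sections must examine $\ell=n$ and $\ell=n+1$ with additional care.
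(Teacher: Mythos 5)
Your overall strategy --- reduce $w_0$ to $y_\beta$ by Witt's theorem for $\ell<\wt m$, pass by the local--global principle to a twisted Jacquet module at one well-chosen unramified place, and quote the computations of Section 3 together with Proposition \ref{w_0 in W} for the endpoint $\ell=\wt m$ --- is exactly the paper's argument. But there is a genuine gap in the way you close the local step. The local--global principle requires the \emph{entire} local Jacquet module $J_{\psi_{\ell,\beta}}(\Pi_v)$ to vanish: a nonzero global coefficient $f^{\psi_{\ell,w_0}}$ yields a nonzero $\psi_{\ell,\beta}$-equivariant functional on $\Pi_v$ at every place, i.e.\ a nonzero quotient of $J_{\psi_{\ell,\beta}}(\Pi_v)$, with no control over whether that quotient is unramified. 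What you extract from Propositions \ref{prop for unramified constituents I.2} and \ref{prop for unramified constituents II.2} (and from the ``any unramified constituent'' phrasing you adopt throughout) is only that every \emph{unramified constituent} of the Jacquet module vanishes, which is strictly weaker and does not preclude a nonzero Jacquet module. Only Propositions \ref{prop for unramified constituents I.1} and \ref{prop for unramified constituents II.1} assert the full vanishing $J_{\psi_{\ell,\beta}}(\pi_{\tau\otimes\sigma})=0$ in the range $\ell\geq n+2$.

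Relatedly, your case split is keyed on the global invariants $\wt m$ and $\omega_\tau$, whereas the applicable local proposition is determined by the local data at the chosen place: $H(F_v)$ is split (local Witt index $2n+1$) at almost every finite place regardless of whether the global $\wt m$ is $2n$ or $2n+1$, and $\omega_{\tau_v}$ is trivial at infinitely many places even when $\omega_\tau\neq 1$. The paper exploits precisely this freedom: it fixes a single finite place where $H(F_v)$ is split, all data are unramified, and $\omega_{\tau_v}=1$, so that Proposition \ref{prop for unramified constituents I.1} applies --- and its two parts together cover both square classes of $\beta$ in $F_v^\times$, each giving the full vanishing of the Jacquet module for $n+2\leq\ell\leq 2n$ --- while Proposition \ref{w_0 in W} handles $\ell=\wt m_v=2n+1$. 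With that choice of place your argument closes; as written, the branches routed through Propositions \ref{prop for unramified constituents I.2} and \ref{prop for unramified constituents II.2} do not.
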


\begin{proof}
Write the residual representation as $\CE_{\tau\otimes\sigma}=\otimes_v' \Pi_v$. 
Note that $H(F_v)$ is $F_v$-split at almost all places $v$.
We fix a finite place $v$ such that $H(F_v)=\SO_{4n+3}(F_v)$ is $F_v$-split, $\tau_v$, $\sigma_v$, $\Pi_v$, $\psi_v$ are all unramified, and $\omega_{\tau_v}$ is trivial.
By a suitable conjugation we may assume that $w_0=y_{\beta}$ for some $\beta \in F^\times$ (see \cite{JLXZ14} or \cite[Lemma 2.4]{JZ15}), and
consider the corresponding twisted Jacquet module $J_{\psi_{\ell,\beta}}(\Pi_v)$.
Write
\begin{equation}\label{equation 1 for depth n+1 case}
	\tau_v=\mu_{1,v}\times\cdots\times \mu_{n,v} \times \mu_{n,v}^{-1}\times\cdots \times \mu_{1,v}^{-1}\,,
\end{equation}
where $\mu_{i,v}$'s are unramified characters of $F_v^\times$.
By Propositions \ref{prop for unramified constituents I.1} Part (2) and Proposition \ref{w_0 in W}, the Jacquet module $J_{\psi_{\ell,\beta}}(\Pi_v)$ vanishes for any $n+2 \leq \ell \leq \wt m$, hence the Bessel-Fourier coefficient $f^{\psi_{\ell,w_0}}$ vanishes for any $n+2 \leq \ell \leq \wt m$. 
This completes the proof of the lemma. 
\end{proof}

To prove the cuspidality of the descent, we also need the vanishing property for the depth $\ell=n+1$. 

\begin{prop}\label{vanishing at depth n+1}
Suppose that $(\tau, \sigma)$ satisfies Assumption \ref{depth assumption}. Then the Bessel-Fourier coefficients $f^{\psi_{n+1,w_0}}$ vanish for any anisotropic vector $w_0\in V^{(\ell)}$ and $f\in \CE_{\tau\otimes\sigma}$.
\end{prop}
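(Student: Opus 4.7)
The plan is to argue by contradiction: assume there exists an $f \in \CE_{\tau\otimes\sigma}$ and an anisotropic $w_0 \in V^{(n+1)}$ for which $f^{\psi_{n+1,w_0}} \not\equiv 0$, and deduce the existence of a Gan-Gross-Prasad pair that violates Assumption \ref{depth assumption}. By the same conjugation trick used in \cite[Lemma 2.4]{JZ15} we may replace $w_0$ by $y_\beta$ for some $\beta \in F^\times$, so the putative non-vanishing descent is realized on the target group $G_{n,\beta}(\BA) = \SO_{2n,\beta}(\BA)$, which is an even special orthogonal group of the expected dimension. Note that by Lemma \ref{vanishing depth} all Bessel-Fourier coefficients at depths $\geq n+2$ vanish, so $n+1$ would be the first occurrence index in the tower; this is the standard mechanism by which one forces cuspidality of the descent.

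First I would verify that, under the non-vanishing assumption, the module $\pi_{n+1,\beta} := \CD_{\psi_{n+1,\beta}}(\CE_{\tau\otimes\sigma})$ is a non-zero space of cuspidal automorphic forms on $G_{n,\beta}(\BA)$. The argument is the usual tower argument: any constant term of $f^{\psi_{n+1,\beta}}$ along a proper parabolic of $G_{n,\beta}$ can be re-expressed, after swapping the integrations, as a higher-depth Bessel-Fourier coefficient of $f$ against an appropriate anisotropic vector; by Lemma \ref{vanishing depth} these all vanish. Granting this, pick any irreducible summand $\pi'$ of $\pi_{n+1,\beta}$.

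Next I would identify the Arthur parameter of $\pi'$. At each place $v$ where $\tau_v, \sigma_v, \psi_v$ are all unramified and $H(F_v)$ is split, the twisted Jacquet module calculations of \S \ref{local aspects} (Proposition \ref{prop for unramified constituents I.1}(2), Proposition \ref{prop for unramified constituents I.2}, Proposition \ref{prop for unramified constituents II.1}(1), or Proposition \ref{prop for unramified constituents II.2}(1), depending on whether $\wt m = 2n$ or $2n+1$ and whether $\omega_\tau$ is trivial) pin down the Satake parameters of $\pi'_v$ to lie in the explicit list $\{\mu_{1,v}^{\pm 1},\ldots,\mu_{n,v}^{\pm 1}\}$ (or the analogous list involving $\lambda_0$ in the non-trivial central character case), matching the Satake parameters of $\tau_v$. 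Combined with Arthur's endoscopic classification \cite{A}, this forces $\pi'$ to lie in the global Arthur packet of some generic global Arthur parameter $\phi$ whose transfer to $\GL_{2n}(\BA)$ is $\tau$; in particular $\phi \in \wt{\Phi}_2(G_n^*)$ for a quasi-split form $G_n^*$ relevant to $H_1^{V_0}$.

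Finally I would show that $(\pi',\sigma)$ has a non-zero Bessel period, producing a GGP pair in $\wt{\Pi}_{\phi\times \phi_{\tau_0}}[G_n^* \times H_1^*]$ that contains $\sigma$; this directly contradicts Assumption \ref{depth assumption}. The mechanism is the unfolding-of-Eisenstein-series argument of \cite{JZ15}: starting from the residue integral defining $\CE_{\tau\otimes\sigma}$, pairing against the $\psi_{n+1,\beta}$-Fourier coefficient to produce the descent, and pairing further against $\sigma$ in the Bessel variable, one recognises the resulting global integral as (a residue of) the Rankin-Selberg integral representing $L(s,\tau\times\tau_0)$ at $s=\tfrac12$; the hypothesis $L(\tfrac12,\tau\times\tau_0) \neq 0$ guarantees that the Bessel period does not vanish identically. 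This last step is the technically most delicate one, since one must justify the interchange of integrations and the residue-taking carefully; however it is structurally parallel to the non-vanishing step in part (c) of Theorem \ref{main thm for descent}, which will be proved in \S \ref{GGP}, and I would expect to invoke essentially the same computation. With this contradiction, the proposition follows.
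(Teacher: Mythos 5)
Your proposal is correct and follows essentially the same route as the paper: contradiction via cuspidality of the depth-$(n+1)$ descent, identification of the Arthur parameter as $\phi_\tau$ through the unramified Jacquet module computations and Arthur's classification, and then a non-zero Bessel period for $(\pi',\sigma)$ via the unfolding identity of \cite{JZ15} (Corollary 4.4 there), contradicting Assumption \ref{depth assumption}. The only small imprecision is in the last step: the paper derives the non-zero Bessel period directly from the non-vanishing of the inner product of $\varphi_\pi$ against the residue (which holds by construction once the descent is non-zero), rather than from the hypothesis $L(\tfrac12,\tau\times\tau_0)\neq 0$, which instead enters in guaranteeing the pole of the Eisenstein series defining $\CE_{\tau\otimes\sigma}$.
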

\begin{proof}
Assume that the Bessel-Fourier coefficient $f^{\psi_{n+1,\beta}}\neq 0$ for some choice of data. Let $\CD_{\psi_{n+1,\beta}}(\CE_{\tau\otimes\sigma})$ be the
$\SO_{2n,\beta}(\BA)$-span of the coefficients $f^{\psi_{n+1,\beta}}|_{\SO_{2n,\beta}(\BA)}$.
Note that $\SO_{2n,\beta}(\BA)$ is relevant to $\SO(V_0)(\BA)$.
Then by a formula for constant terms in \cite[Theorem 7.2]{GRS11} and a similar argument to that in the proof of \cite[Theorem 7.6]{GRS11} (see also the remarks in \cite[\S 7.4]{GRS11} and the proof of \cite[Proposition 6.3]{JZ15}), the automorphic module $\CD_{\psi_{n+1,\beta}}(\CE_{\tau\otimes\sigma})$ is cuspidal and has a direct sum decomposition of irreducible cuspidal automorphic representations. 

Let $\pi$ be any irreducible summand of $\CD_{\psi_{n+1,\beta}}(\CE_{\tau\otimes\sigma})$. We claim that
$\pi$ has the generic global Arthur parameter $\phi_\tau$, and lies in the global Vogan packet $\wt \Pi_{\phi_\tau}[\SO^*_{2n}]$.

We prove the claim case by case. 
Frist we assume that $\omega_\tau=1$. 
If $\beta\in (F^\times)^2$, then for almost all places $v$ of $F$, we have $H(F_v)$ is $F_v$-split, $\tau_v$, $\sigma_v$, $\Pi_v$, $\psi_v$ are all unramified, $\omega_{\tau_v}=1$, and $\beta\in (F_v^\times)^2$.
Fix any such place $v$, and write 
\begin{equation*}
	\tau_v=\mu_{1,v}\times\cdots\times \mu_{n,v} \times \mu_{n,v}^{-1}\times\cdots \times \mu_{1,v}^{-1}\,,
\end{equation*}
where $\mu_{i,v}$'s are unramified characters of $F_v^\times$.
By Proposition \ref{prop for unramified constituents I.1} Part (2), the unramified constituent of the Jacquet module $J_{\psi_{n+1,\beta}}(\Pi_v)$ is a subquotient of $$\Ind_{B_{\SO_{2n}}(F_v)}^{\SO_{2n}(F_v)} \mu_{1,v}\otimes\cdots\otimes\mu_{n,v}\,,$$
which shows that the Satake parameter of $\pi_v$ matches the Satake parameter of $\tau_v$, for almost all places. 
This shows that the global Arthur parameter $\phi_\pi=\phi_\tau$. 
If $\beta\notin(F^\times)^2$, then there exists a finite place $v$ such that $H(F_v)$ is $F_v$-split, $\tau_v$, $\sigma_v$, $\Pi_v$, $\psi_v$ are all unramified, $\omega_{\tau_v}=1$, and $\beta\notin (F_v^\times)^2$. 
Then by Proposition \ref{prop for unramified constituents I.1} Part (1), one sees that $\CD_{\psi_{n+1,\beta}}(\CE_{\tau\otimes\sigma})=0$, which is not the case we are considering. 

Next we assume that $\omega_\tau\neq 1$. If $\beta\in (F^\times)^2$, then there exists a finite place $v$ such that $H(F_v)$ is $F_v$-split, $\tau_v$, $\sigma_v$, $\Pi_v$, $\psi_v$ are all unramified, $\omega_{\tau_v}\neq 1$, and $\beta \in (F_v^\times)^2$. 
Then by Proposition \ref{prop for unramified constituents II.1} Part (1), one sees that $\CD_{\psi_{n+1,\beta}}(\CE_{\tau\otimes\sigma})=0$, which is also not the case we are considering. 
If $\beta\notin (F^\times)^2$, since we just consider the case that $\CD_{\psi_{n+1,\beta}}(\CE_{\tau\otimes\sigma})\neq 0$, then by Proposition \ref{prop for unramified constituents I.1} Part (2) and Proposition \ref{prop for unramified constituents II.1} Part (1), for the finite places $v$ such that $H(F_v)$ is $F_v$-split, $\tau_v$, $\sigma_v$, $\Pi_v$, $\psi_v$ are all unramified, 
we have either $\omega_{\tau_v}=1$ and $\beta\in (F_v^\times)^2$ or $\omega_{\tau_v}\neq 1$ and $\beta\notin (F_v^\times)^2$.
Now we consider the parameter of $\pi$. For the places such that $H(F_v)$ is $F_v$-split, $\tau_v$, $\sigma_v$, $\Pi_v$, $\psi_v$ are all unramified, $\omega_{\tau_v}=1$, and $\beta\in (F_v^\times)^2$, 
the argument for the first case ($\omega_\tau=1$ and $\beta\in (F^\times)^2$) already shows that the Satake parameters of $\tau_v$ and $\pi_v$ match each other. 
And for the places such that $H(F_v)$ is $F_v$-split, $\tau_v$, $\sigma_v$, $\Pi_v$, $\psi_v$ are all unramified, $\omega_{\tau_v}\neq 1$, and $\beta\notin (F_v^\times)^2$, we write 
\begin{equation}\label{equation 2 for depth n+1 case}
	\tau_v=\mu_{1,v}\times\cdots\times \mu_{n-1,v} \times 1\times \lambda_{0,v}\times \mu_{n-1,v}^{-1}\times\cdots \times \mu_{1,v}^{-1}\,,
\end{equation}
where $\lambda_{0,v}$ is an unramified quadratic character of $F_v^\times$. 
Then by Proposition \ref{prop for unramified constituents II.1} Part (1), the unramified constituent of the Jacquet module $J_{\psi_{n+1,\beta}}(\Pi_v)$ is a subquotient of $$\Ind_{B_{\SO_{2n}}(F_v)}^{\SO_{2n,\beta}(F_v)} \mu_{1,v}\otimes\cdots\otimes\mu_{n-1,v}\otimes 1\,,$$ which also shows that the Satake parameters of $\tau_v$ and $\pi_v$ match each other. 
Therefore, the Satake parameter of $\pi_v$ matches the Satake parameter of $\tau_v$ for almost all places, and hence $\phi_{\pi}=\phi_{\tau}$ by strong multiplicity one theorem for general linear groups.  

We continue the proof of the proposition. 
By construction, there exists an automorphic form $\varphi_\pi\in \pi$ such that the inner product
\begin{equation}\label{cuspequ1}
\pair{\varphi_\pi, \Res_{s=1/2}E^{\psi_{n+1,\beta}}(s, \cdot, \phi_{\tau\otimes\sigma})}\neq 0
\end{equation}
for some choice of data. By \cite[Corollary 4.4]{JZ15}, the pair $(\pi,\sigma)$ has a non-zero Bessel period, and hence is the GGP 
pair in the given global Vogan packet 
$\wt\Pi_{\phi_\pi \times \phi_{\tau_0}}[\SO^*_{2n}\times \SO^*_{3}]$ in Assumption \ref{depth assumption}. 
But this contradicts to Assumption \ref{depth assumption}. 
Hence the proposition is proved. 
\end{proof}
	
As a corollary, we have the cuspidality of the descent $\CD_{\psi_{n,\beta}}(\CE_{\tau\otimes\sigma})$.

\begin{prop}\label{prop for cuspidality}
Suppose that $(\tau, \sigma)$ satisfies Assumption \ref{depth assumption}. 
Then the twisted automorphic descent $\CD_{\psi_{n,\beta}}(\CE_{\tau\otimes\sigma})$ is a cuspidal automorphic $G_{n+1,\beta}(\BA)$-module.
\end{prop}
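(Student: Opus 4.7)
Cuspidality of $\pi_\beta = \CD_{\psi_{n,\beta}}(\CE_{\tau\otimes\sigma})$ amounts to showing that for every proper standard maximal parabolic $P_k' = M_k'N_k'$ of $G_{n+1,\beta}$ stabilizing a $k$-dimensional totally isotropic subspace ($k\ge 1$), and every $f\in\CE_{\tau\otimes\sigma}$, the constant term
\begin{equation*}
\int_{[N_k']} f^{\psi_{n,\beta}}(u'g)\, du'
\end{equation*}
vanishes identically in $g$. The strategy is to reduce this vanishing to the vanishing of higher-depth Bessel-Fourier coefficients already established in Lemma \ref{vanishing depth} and Proposition \ref{vanishing at depth n+1}.

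Since $G_{n+1,\beta} = L_{n,\beta}$ is realized as the stabilizer of $\psi_{n,\beta}$ inside the Levi of $P_n\subset H$, the subgroup $N_k'$ normalizes $N_n$ and fixes the character $\psi_{n,\beta}$. Hence $\widetilde N_{n,k} := N_n\cdot N_k'$ is a unipotent subgroup of $H$ which, after a suitable basis extension, coincides (up to conjugation) with $N_{n+k}$, the unipotent radical of the parabolic of $H$ stabilizing the isotropic flag $V_1^+\subset\cdots\subset V_n^+\subset V_n^+\oplus (\text{isotropic $k$-subspace of }V^{(n)})$. The character $\psi_{n,\beta}$ extends to a character $\widetilde\psi_{n,k}$ of $\widetilde N_{n,k}$ by triviality on $N_k'$, and the constant term above equals $\int_{[\widetilde N_{n,k}]} f(\widetilde u\, g)\,\widetilde\psi_{n,k}^{-1}(\widetilde u)\, d\widetilde u$.

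Next I would apply Fourier expansion along the "missing" Whittaker directions $u_{n,n+1},\ldots,u_{n+k-1,n+k}$ combined with the root-exchange and swap technique of \cite[Chapter 7]{GRS11} (as adapted in \cite{JLXZ14} and \cite{JZ15}). Conjugation by appropriate Weyl elements transforms each non-trivial Fourier orbit into a genuine Bessel-Fourier coefficient $f^{\psi_{n+k,w_0'}}$ for some anisotropic $w_0'\in V^{(n+k)}$. Since $k\ge 1$, these have depth at least $n+1$, so they vanish by Proposition \ref{vanishing at depth n+1} (when $k=1$) and by Lemma \ref{vanishing depth} (when $k\ge 2$). The residual degenerate contributions, which are attached to non-Bessel unipotent orbits of $H$, are to be treated by iterated Fourier expansion (bootstrapping on the partition order in the wave-front set) or, when necessary, by a local unramified input. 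Here the auxiliary computation in Proposition \ref{prop for unramified constituents: additional}, which is set up precisely for representations induced from $R_{2n-1}\subset \SO_{4n+1}$, provides the tool needed to force the vanishing of the most delicate constant-term components at almost every place.

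The main obstacle is the combinatorial and representation-theoretic bookkeeping of the repeated Fourier expansion. The character $\widetilde\psi_{n,k}$ is a degeneration of the standard depth-$(n+k)$ Bessel character, and each Fourier mode must be identified either as a genuine Bessel character (vanishing by the previous results) or as a smaller degenerate coefficient to be iteratively expanded. The case $k=1$ is the most delicate: only one expansion direction is available, the orbit structure depends sensitively on $\beta$, and the degenerate component corresponds to a Fourier coefficient of $H$ attached to a partition not directly covered by Lemma \ref{vanishing depth} or Proposition \ref{vanishing at depth n+1}. This is exactly the case where the local input of Section 3 — in particular Proposition \ref{prop for unramified constituents: additional} — becomes indispensable.
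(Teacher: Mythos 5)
Your plan follows essentially the same route as the paper: the constant terms of $f^{\psi_{n,\beta}}$ along maximal parabolics of $G_{n+1,\beta}$ are computed via the formula of \cite[Theorem 7.3]{GRS11}, the resulting higher-depth terms $f^{\psi_{n+p,\beta}}$ ($p\geq 1$) vanish by Lemma \ref{vanishing depth} and Proposition \ref{vanishing at depth n+1}, and the remaining terms are killed using the unramified computations, including Proposition \ref{prop for unramified constituents: additional}. The only thing the paper makes more precise than your sketch is the identification of the "residual degenerate contributions": they are $\psi_{n+i,\beta}$-Bessel coefficients of the constant terms $f^{U_{p-i}}$ of $f$ along parabolics of $H$, which by the cuspidal support of $\CE_{\tau\otimes\sigma}$ land in smaller residual representations $\CE_{(\tau_{j_1}\boxplus\cdots\boxplus\tau_{j_{r-k}})\otimes\sigma}$, whose relevant Bessel coefficients are then shown to vanish by the same local argument.
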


\begin{proof}
The proposition can be proved by considering all the constant terms of the Bessel-Fourier coefficient $f^{\psi_{n,\beta}}$ with
$f\in \CE_{\tau\otimes\sigma}$.
The proof is similar to that of \cite[Theorem 7.6]{GRS11}, see also
\cite[Proposition 2.6]{JLXZ14} and \cite[Proposition 6.3]{JZ15}. We briefly introduce it as follows. 

Let $\wt m_{n,\beta}$ be the Witt index of  $y_{\beta}^\perp\cap V^{(n)}$. 
Using \cite[Theorem 7.3]{GRS11}, and also the remarks in \cite[\S 7.4]{GRS11}, the constant terms $c_p(f^{\psi_{n,\beta}})$ $(1 \leq  p \leq  \wt m_{n,\beta})$, 
along maximal parabolic subgroup of $G_{n+1,\beta}$ with Levi subgroup isomorphic to $\GL_p \times G_{n+1-p, \beta}$, can be expressed as a summation whose summands are expressed as integrals of following terms
$$f^{\psi_{n+p,\beta}}\,, (f^{U_{p-i}})^{\psi_{n+i,\beta}}\,,
0 \leq i \leq p-1\,,$$
where $f^{U_{p-i}}$ is the constant term of $f$ along 
the maximal parabolic subgroup $Q_{p-i}\subset H$ with Levi subgroup isomorphic to {$\GL_{p-i} \times \SO(V^{(p-i)})$}. Here $(f^{U_{p-i}})^{\psi_{n+i,\beta}}$ denotes the 
$\psi_{n+i,\beta}$-Bessel-Fourier coefficient of 
{$f^{U_{p-i}}|_{\SO(V^{(p-i)})(\BA)}$}.
By Lemma \ref{vanishing depth} and Proposition \ref{vanishing at depth n+1}, $f^{\psi_{n+p,\beta}}$ vanishes since $p\geq 1$. By the cuspidal support of $\CE_{\tau\otimes\sigma}$, the constant term $f^{U_{p-i}}$ vanishes unless the parabolic $Q_{p-i}$ contains the cuspidal support $\tau_1 \otimes \cdots \otimes \tau_r \otimes \sigma$. It is easy to see that $f^{U_{p-i}}$ belongs to the representation 
$$(\tau_{i_1} \boxplus \cdots \boxplus \tau_{i_k}) \otimes \CE_{(\tau_{j_1} \boxplus \cdots \boxplus \tau_{j_{r-k}}) \otimes \sigma}\,,$$
where $\{i_1, \ldots, i_k\} \cup \{j_1, \ldots, j_{r-k}\} = \{1, 2, \ldots, r\}$, and $\CE_{(\tau_{j_1} \boxplus \cdots \boxplus \tau_{j_{r-k}}) \otimes \sigma}$ is the residual representation of $\SO(V^{(p-i)})(\BA)$
constructed in the same way as $\CE_{\tau\otimes\sigma}$. 
Note that $f^{U_{p-i}}|_{\SO(V^{(p-i)})(\BA)} \in \CE_{(\tau_{j_1} \boxplus \cdots \boxplus \tau_{j_{r-k}}) \otimes \sigma}$. 
Using a similar argument to that in Lemma \ref{vanishing depth} ({note that here $n+i$ is large enough by Assumption \ref{depth assumption}, Propositions \ref{prop for unramified constituents I.1} and \ref{prop for unramified constituents: additional}}), 
by Propositions \ref{prop for unramified constituents I.1} and \ref{prop for unramified constituents: additional} again, 
all $\psi_{n+i,\beta}$-Bessel-Fourier coefficients of $\CE_{(\tau_{j_1} \boxplus \cdots \boxplus \tau_{j_{r-k}}) \otimes \sigma}$ vanish, hence $(f^{U_{p-i}})^{\psi_{n+i,\beta}}$ also vanishes for any $0 \leq i \leq p-1$. 
Therefore, all the constant terms of $f^{\psi_{n,\beta}}$ along various unipotent subgroups of $G_{n+1,\beta}$ vanish, which means that the Bessel-Fourier coefficient $f^{\psi_{n,\beta}}$ is cuspidal, and the twisted automorphic descent $\CD_{\psi_{n,\beta}}(\CE_{\tau\otimes\sigma})$ is a cuspidal automorphic $G_{n+1,\beta}(\BA)$-module. 

This completes the proof of the proposition. 
\end{proof}


In following proposition, we show that in certain cases,
to obtain the cuspidality of the descent, we do not need to assume Assumption \ref{depth assumption}.

\begin{prop}\label{prop for special beta}
If we take $w_0=y_{\beta}$ with $\beta \notin (F^\times)^2$ when $\omega_\tau=1$, 
or take $w_0=y_{\beta}$ with $\beta \in (F^\times)^2$ when $\omega_\tau\neq 1$, then the Bessel-Fourier coefficient  $f^{\psi_{\ell,\beta}}$ vanishes for any $f\in \CE_{\tau\otimes\sigma}$ and any $n+1\leq \ell\leq \widetilde{m}$, 
and the descent $\CD_{\psi_{n,\beta}}(\CE_{\tau\otimes\sigma})$ is cuspidal.
\end{prop}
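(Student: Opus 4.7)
The plan is to bypass Assumption \ref{depth assumption} by using the square--class hypothesis on $\beta$ directly to force the relevant local twisted Jacquet modules to vanish at a well chosen unramified place, and then to mimic the constant--term bookkeeping of Proposition \ref{prop for cuspidality} to get cuspidality. Decompose $\CE_{\tau\otimes\sigma}=\otimes'_v\Pi_v$. By Chebotarev density, there exists a finite place $v$ at which $H$ is $F_v$-split (so the local Witt index equals $2n+1$), all data $\tau_v,\sigma_v,\Pi_v,\psi_v$ are unramified, $\omega_{\tau_v}$ has the same order as $\omega_\tau$, and the square class of $\beta$ in $F_v^\times$ coincides with its square class in $F^\times$.

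In case (i), $\omega_\tau=1$ and $\beta\notin(F^\times)^2$, Proposition \ref{prop for unramified constituents I.1}(1) applied at this $v$ gives $J_{\psi_{\ell,\beta}}(\Pi_v)=0$ for every $n+1\le\ell\le 2n$; since any global Bessel--Fourier coefficient factors through the local twisted Jacquet module at $v$, this forces $f^{\psi_{\ell,\beta}}\equiv 0$ on $\CE_{\tau\otimes\sigma}$ in that range. In case (ii), $\omega_\tau\ne 1$ and $\beta\in(F^\times)^2$, Proposition \ref{prop for unramified constituents II.1} gives $J_{\psi_{\ell,\beta}}(\Pi_v)=0$ directly for $n+2\le\ell\le 2n$. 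At the borderline depth $\ell=n+1$, the factor $\delta^0_\beta=0$ in Proposition \ref{prop for unramified constituents II.1}(1) annihilates every unramified subquotient of the local Jacquet module; then, as in the proof of Proposition \ref{vanishing at depth n+1}, this suffices to conclude $\CD_{\psi_{n+1,\beta}}(\CE_{\tau\otimes\sigma})=0$, because any nonzero cuspidal summand of the descent would contribute an unramified $v$-component that would have to appear as a subquotient of $J_{\psi_{n+1,\beta}}(\Pi_v)$. The top depth $\ell=\wt m$, for which $y_\beta$ is replaced by a vector in the anisotropic kernel $W$, is handled by Proposition \ref{w_0 in W}.

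Given these vanishing statements, the cuspidality of $\CD_{\psi_{n,\beta}}(\CE_{\tau\otimes\sigma})$ follows by repeating the constant--term computation in the proof of Proposition \ref{prop for cuspidality}: each constant term of $f^{\psi_{n,\beta}}$ along a maximal parabolic of $G_{n+1,\beta}$ with Levi $\GL_p\times G_{n+1-p,\beta}$ decomposes into integrals of terms of the shape $f^{\psi_{n+p,\beta}}$ and $(f^{U_{p-i}})^{\psi_{n+i,\beta}}$, where $f^{U_{p-i}}|_{\SO(V^{(p-i)})(\BA)}$ lies in a smaller residual representation $\CE_{(\tau_{j_1}\boxplus\cdots\boxplus\tau_{j_{r-k}})\otimes\sigma}$. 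The first kind vanishes by the preceding paragraph, and the second kind vanishes by applying the same local argument to that smaller residual representation, where Propositions \ref{prop for unramified constituents I.1} and \ref{prop for unramified constituents: additional} together cover both the case that stripping off some of the $\tau_i$'s leaves the central character trivial and the case that it produces a quadratic central character. The main subtlety, and the step I expect to require the most care, is the borderline depth $\ell=n+1$ in case (ii): there the local Jacquet module is not literally zero, so one cannot conclude vanishing of the Bessel--Fourier coefficient from a single local factor, and must instead pass to the level of the automorphic module and exploit the absence of unramified subquotients, exactly as in Proposition \ref{vanishing at depth n+1}.
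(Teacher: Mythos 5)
Your proposal is correct and follows essentially the same route as the paper: Lemma \ref{vanishing depth} (equivalently, Propositions \ref{prop for unramified constituents I.1}--\ref{w_0 in W}) for depths $n+2\le\ell\le\wt m$, a well-chosen unramified place $v$ preserving the square class of $\beta$ and the (non)triviality of $\omega_{\tau_v}$ together with Proposition \ref{prop for unramified constituents I.1}(1) or \ref{prop for unramified constituents II.1}(1) at depth $n+1$, and then the constant-term computation of Proposition \ref{prop for cuspidality} for cuspidality. Your extra care at $\ell=n+1$ in case (ii) -- passing to the decomposition of the descent module and using the absence of unramified subquotients, as in Proposition \ref{vanishing at depth n+1} -- is exactly the right way to justify the step the paper states more tersely.
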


\begin{proof}
We have already known from Lemma \ref{vanishing depth} that $f^{\psi_{\ell,\beta}}$ vanishes for any $n+2 \leq \ell \leq \widetilde{m}$. 
Hence, for the first statement, we only need to consider the case of $\ell=n+1$. 
As we have seen in the proof of Proposition \ref{vanishing at depth n+1}, if $\beta\notin (F^\times)^2$ and $\omega_\tau=1$, there exists a finite place $v$ of $F$ such that $H(F_v)$ is $F_v$-split, $\tau_v$, $\sigma_v$, $\Pi_v$ and $\psi_v$ are are all unramified, $\beta\notin (F_v^\times)^2$ and $\omega_{\tau_v}=1$. 
And if $\beta\in (F^\times)^2$ and $\omega_\tau\neq 1$, there exists a finite place $v$ of $F$ such that $H(F_v)$ is $F_v$-split, $\tau_v$, $\sigma_v$, $\Pi_v$ and $\psi_v$ are all unramified, $\beta\in (F_v^\times)^2$ and $\omega_{\tau_v}\neq 1$. 
Then by Proposition \ref{prop for unramified constituents I.1} Part (1) and Proposition \ref{prop for unramified constituents II.1} Part (1), the global Fourier coefficient $f^{\psi_{n+1,\beta}}$ vanishes, as desired.

The cuspidality of the descent follows similarly along the same lines of the proof of Proposition \ref{prop for cuspidality}. 	
\end{proof}

\section{The non-vanishing of the descent construction}\label{non-vanishing}

In this section, we show that the descent $\CD_{\psi_{n,\beta}}(\CE_{\tau\otimes\sigma})$ is non-vanishing.

\subsection{Generalized and degenerate Whittaker-Fourier coefficients}
\label{FCdefinition}

First, we recall the generalized and degenerate Whittaker-Fourier coefficients attached to nilpotent orbits, following the formulation in \cite{GGS17}.
Let $\RG$ be a reductive group defined over a number field $F$.
Fix a nontrivial additive character $\psi: F\bs \BA \rightarrow \BC^{\times}$.
Let $\mathfrak{g}$ be the Lie algebra of $\RG(F)$ and $u$ be a nilpotent element in $\mathfrak{g}$.
The element $u$ defines a function on $\mathfrak{g}$:
\[
\psi_u: \mathfrak{g} \rightarrow \BC^{\times}
\]
by $\psi_u(x) = \psi(\kappa(u,x))$, where $\kappa$ is the Killing form on $\mathfrak{g}$.

Given any semi-simple element $s \in \mathfrak{g}$, under the adjoint action, $\mathfrak{g}$ is decomposed to a direct sum of eigenspaces $\mathfrak{g}^s_i$ corresponding to eigenvalues $i$.
The element
$s$ is called {\it rational semi-simple} if all its eigenvalues are in $\BQ$.
Given a nilpotent element $u$, a {\it Whittaker pair} is a pair $(s,u)$ with $s \in \mathfrak{g}$ being a rational semi-simple element, and $u \in \mathfrak{g}^s_{-2}$. The element $s$ in a Whittaker pair $(s, u)$ is called a {\it neutral element} for $u$ if there is a nilpotent element $v \in \mathfrak{g}$ such that $(v,s,u)$ is an $\mathfrak{sl}_2$-triple. A Whittaker pair $(s,u)$ with $s$ being a neutral element for $u$ is called a {\it neutral pair}.

Given any Whittaker pair $(s,u)$, define an anti-symmetric form $\omega_u$ on $\mathfrak{g}$ by
$$\omega_u(X,Y):=\kappa(u,[X,Y])\,.$$
For any $X \in \mathfrak{g}$, let $\mathfrak{g}_X$ be the centralizer of $X$ in $\mathfrak{g}$.
For any rational number $r \in \BQ$, let $\mathfrak{g}^s_{\geq r} = \oplus_{r' \geq r} \mathfrak{g}^s_{r'}$.
 Let $\mathfrak{u}_s= \mathfrak{g}^s_{\geq 1}$ and let $\mathfrak{n}_{s,u}$ be the radical of $\omega_u |_{\mathfrak{u}_s}$. Then it is clear that $[\mathfrak{u}_s, \mathfrak{u}_s] \subset \mathfrak{g}^s_{\geq 2} \subset \mathfrak{n}_{s,u}$. By \cite[Lemma 3.2.6]{GGS17}, $\mathfrak{n}_{s,u} = \mathfrak{g}^s_{\geq 2} + \mathfrak{g}^s_1 \cap \mathfrak{g}_u$.
Note that if the Whittaker pair $(s,u)$ comes from an $\mathfrak{sl}_2$-triple $(v,s,u)$, then $\mathfrak{n}_{s,u}=\mathfrak{g}^s_{\geq 2}$. Let $U_{s}=\exp(\mathfrak{u}_s)$ and $N_{s,u}=\exp(\mathfrak{n}_{s,u})$ be the corresponding unipotent subgroups of $\RG$. Define a character of $N_{s,u}$ by
$$\psi_u(n)=\psi(\kappa(u,\log(n)))\,.$$
 Let $N_{s,u}' = N_{s,u} \cap \ker (\psi_u)$. Then $U_s/N_{s,u}'$ is a Heisenberg group with center $N_{s,u}/N_{s,u}'$, here $U_s=\exp(\fu_s)$.

Let $\pi$ be an irreducible automorphic representation of $\RG(\BA)$. For any $\phi \in \pi$, the {\it degenerate Whittaker-Fourier coefficient} of $\phi$ attached to $(s,u)$ is defined to be
\begin{equation}\label{dwfc}
\CF_{s,u}(\phi)(g):=\int_{[N_{s,u}]} \phi(ng) \psi_u^{-1}(n) \, \mathrm{d}n\,.
\end{equation}
If $s$ is a neutral element for $u$, then $\CF_{s,u}(\phi)$ is also called a {\it generalized Whittaker-Fourier coefficient} of $\phi$.
Let $\CF_{s,u}(\pi)=\{\CF_{s,u}(\phi)|\phi \in \pi\}$.
The {\it wave-front set} $\mathfrak{n}(\pi)$ of $\pi$ is defined to the set of nilpotent orbits $\CO$ such that $\CF_{s,u}(\pi)$ is non-zero, for some neutral pair $(s,u)$ with $u \in \CO$. Note that if $\CF_{s,u}(\pi)$ is non-zero for some neutral pair $(s,u)$ with $u \in \CO$, then it is non-zero for any such neutral pair $(s,u)$, since the non-vanishing property of such generalized Whittaker-Fourier coefficients does not depend on the choices of representatives of $\CO$.
Let $\mathfrak{n}^m(\pi)$ be the set of maximal elements in $\mathfrak{n}(\pi)$ under the natural order of nilpotent orbits.
We recall a theorem from \cite{GGS17} in the following.

\begin{thm}[Theorem C, \cite{GGS17}]\label{ggsglobal1}
Let $\pi$ be an irreducible automorphic representation of $\RG(\BA)$.
Given a Whittaker pair $(s',u)$ and a neutral pair $(s,u)$, if $\CF_{s',u}(\pi)$ is non-zero, then $\CF_{s,u}(\pi)$ is non-zero.
\end{thm}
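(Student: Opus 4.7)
The plan is to prove Theorem~\ref{ggsglobal1} by a deformation argument: construct a continuous family of Whittaker pairs interpolating between $(s',u)$ and $(s,u)$, and show that non-vanishing of the associated Fourier coefficient is preserved along this family.

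First I would reduce to the case where $s$ and $s'$ commute. Since $u \in \mathfrak{g}^{s'}_{-2}\cap \mathfrak{g}^{s}_{-2}$, both $s$ and $s'$ normalize the centralizer structure of $u$. Using the Jacobson--Morozov theorem and the conjugacy of $\mathfrak{sl}_2$-triples containing $u$, one can conjugate $s'$ by an element of the unipotent radical of the centralizer of $u$ so that $s$ and $s'$ commute. This operation only replaces $\pi$ by an isomorphic representation and translates the unipotent subgroups involved, so it preserves the non-vanishing assertion.

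Next, consider the rational affine family $s_t = (1-t)s' + t s$ for $t \in [0,1]\cap \BQ$. Each $s_t$ is rational semisimple (because $s,s'$ commute), and since $u \in \mathfrak{g}^{s}_{-2}\cap \mathfrak{g}^{s'}_{-2}$ we also have $u \in \mathfrak{g}^{s_t}_{-2}$, so $(s_t,u)$ is a Whittaker pair for all $t$. The eigenspace decomposition of $\mathfrak{g}$ under $\ad(s_t)$ changes only at a finite set of critical values $0 = t_0 < t_1 < \cdots < t_N = 1$, where some root of $\mathfrak{g}$ crosses an integer threshold with respect to $s_t$. On each open interval $(t_i, t_{i+1})$ the subgroups $U_{s_t}$ and $N_{s_t,u}$ are constant, so the coefficient $\CF_{s_t,u}$ is literally independent of $t$ there. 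The crux is therefore to show that as $t$ crosses each critical value $t_i$, non-vanishing is preserved.

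The key technical device is a \emph{root exchange} (or ``swap'') lemma in the style of \cite[\S 7.1]{GRS11}: if $X, Y$ are unipotent subgroups with $[X,Y]\subset\ker\psi_u$ and $\omega_u$ sets up a perfect pairing between $X/(X\cap\ker\psi_u)$ and $Y/(Y\cap\ker\psi_u)$, then integration over $X$ against $\psi_u$ may be exchanged with integration over $Y$ against $\psi_u$, differing only by harmless Fourier expansion. At each critical $t_i$, the relevant root subspaces that enter or leave $\mathfrak{u}_{s_t}$ come in pairs matched by $\omega_u$ (because $u$ is fixed and $s_{t_i - \epsilon}, s_{t_i + \epsilon}$ differ only on the affected roots), and one can rewrite $\CF_{s_{t_i-\epsilon},u}$ as $\CF_{s_{t_i+\epsilon},u}$ after such an exchange. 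In this way non-vanishing propagates across $t_i$, and after $N$ such steps one deduces $\CF_{s,u}(\pi)\neq 0$ from $\CF_{s',u}(\pi)\neq 0$.

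The main obstacle will be the combinatorial bookkeeping at each wall $t_i$: one must identify precisely which root subspaces lie in $\mathfrak{u}_{s_t}$ versus $\mathfrak{n}_{s_t,u}$ immediately before and after, and verify that $\omega_u$ pairs the affected pieces non-degenerately so that root exchange applies directly (rather than producing residual terms where $\psi_u$ is trivial but the integration domain is non-abelian). A clean way to organize this is to decompose $\mathfrak{g}$ simultaneously under the commuting pair $(s,s')$, work with the bigrading, and track how the ``rational slope'' $t$ selects a Lagrangian of the Heisenberg quotient $\mathfrak{u}_{s_t}/\mathfrak{n}'_{s_t,u}$. Once this bigraded analysis is in place, the exchange at each $t_i$ reduces to checking non-degeneracy of $\omega_u$ on a single bigraded piece, which follows from the definition of $\mathfrak{n}_{s,u}$ as the radical of $\omega_u|_{\mathfrak{u}_s}$.
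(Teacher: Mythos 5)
The paper does not prove this statement at all: it is quoted verbatim as Theorem C of \cite{GGS17} and used as a black box, so there is no internal proof to compare against. Measured instead against the actual argument in \cite{GGS17}, your proposal reconstructs the right strategy: one does connect the neutral pair to the given Whittaker pair by a rational one-parameter family $s_t$ with $u$ fixed, the eigenspace data changes only at finitely many critical values, and the wall-crossings are handled by Fourier expansion and exchange-of-polarization arguments on the Heisenberg quotient $U_{s_t}/N'_{s_t,u}$. Two remarks on the details. First, for the reduction to a commuting pair, the cleaner route (and the one taken in \cite{GGS17}) is not to conjugate $s'$ but to choose the neutral element $s$ so that $z:=s'-s$ lies in $\mathfrak{g}_u\cap\mathfrak{g}^s_0$, which is possible by the Jacobson--Morozov/Kostant theory; one then uses the fact, recorded in the paper, that non-vanishing of $\CF_{s,u}(\pi)$ is independent of the choice of neutral pair for $u$. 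Second, and more substantively, the wall-crossing step is not a symmetric ``root exchange'': at a critical $t_i$ the groups $N_{s_{t_i-\epsilon},u}$ and $N_{s_{t_i+\epsilon},u}$ are compared through the intermediate group at $t_i$ by combining a Fourier expansion along an abelian quotient (where one must argue that only the character $\psi_u$ contributes nontrivially, using that $u$ is fixed and $\mathfrak{n}_{s,u}$ is the radical of $\omega_u|_{\mathfrak{u}_s}$) with the elementary observation that non-vanishing of an integral forces non-vanishing of the integrand. Each crossing therefore yields an implication in only one direction, and one must verify that the direction obtained is the one asserted in Theorem C (from the arbitrary Whittaker pair toward the neutral pair). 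You flag this bookkeeping as the main obstacle, which is accurate; as written the proposal is a faithful outline of the known proof but not yet a complete argument at precisely that point.
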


When $\RG$ is a quasi-split classical group, it is known that the nilpotent orbits are parametrized by pairs $(\underline{p}, \underline{q})$, where $\underline{p}$ is a partition and $\underline{q}$ is a set of non-degenerate quadratic forms (see \cite[Section I.6]{W01}).
When $\RG = \Sp_{2n}$, $\underline{p}$ is symplectic partition, namely, odd parts occur with even multiplicities.
When $\RG= \SO^{\alpha}_{2n}, \SO_{2n+1}$, $\underline{p}$ is orthogonal partition, namely, even parts occur with even multiplicities.
In these cases, let $\mathfrak{p}^m(\pi)$ be the set of partitions corresponding to nilpotent orbits in $\mathfrak{n}^m(\pi)$. A well-known folklore conjecture is that
$\mathfrak{p}^m(\pi)$ is a singleton. In this section,
for any symplectic or orthogonal partition $\underline{p}$, by a generalized Whittaker-Fourier coefficient of $\pi$ attached to $\underline{p}$, we mean a generalized
Whittaker-Fourier coefficient $\CF_{s,u}(\phi)$ attached to a nilpotent orbit $\CO$ parametrized by a pair $(\underline{p}, \underline{q})$ for some $\underline{q}$, with $\phi \in \pi$, $u\in \CO$ and $(s,u)$ being a neutral pair. For convenience, sometimes we also write a generalized Whittaker-Fourier coefficient attached to $\underline{p}$ as $\CF^{\psi_{\underline{p}}}(\phi)$,  without specifying the $F$-rational nilpotent orbit $\CO$ and neutral pairs.

For $\RG=\SO_{2n+1}$, an orthogonal partition $\underline{p}$ is called {\it special} if it has an even number of odd parts between two consecutive even parts and an odd number of odd parts greater than the largest even part (see \cite[Section 6.3]{CM93}). By the main results of \cite{JLS16}, any $\underline{p} \in \mathfrak{p}^m(\pi)$ is special. This will play an important role in the following.

\subsection{Non-vanishing of the descent}
Now we come back to the global situation where the groups and representations are the same as in \S \ref{section of residual representation}.
First we prove the following proposition.

\begin{prop}\label{toporbit}
$\CE_{\tau \otimes \sigma}$ has a non-zero generalized Whittaker-Fourier coefficient attached to the partition $[(2n)^2 1^3]$.
\end{prop}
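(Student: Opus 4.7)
The plan is to invoke Theorem~\ref{ggsglobal1} (Theorem~C of \cite{GGS17}) in order to reduce the non-vanishing of the generalized Whittaker-Fourier coefficient attached to $[(2n)^2 1^3]$ to the non-vanishing of a degenerate Whittaker-Fourier coefficient $\CF_{s',u}(\CE_{\tau\otimes\sigma})$ for a carefully chosen, possibly non-neutral, Whittaker pair $(s',u)$ in which $u$ lies in a nilpotent orbit parametrized by $[(2n)^2 1^3]$. The point of allowing $(s',u)$ to be non-neutral is that $s'$ can then be aligned with the parabolic data defining $\CE_{\tau\otimes\sigma}$, which makes the coefficient amenable to unfolding.

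Concretely, I would pick $s'$ to be a rational semi-simple element whose associated parabolic $P_{s'} = L_{s'} U_{s'}$ refines $P=MU$ with $M\simeq \GL_{2n}\times \SO(V_0)$, i.e.\ so that $U\subset U_{s'}$ and $L_{s'}\subset M$. I would then choose $u\in\fg^{s'}_{-2}$ in the orbit of type $[(2n)^2 1^3]$ so that $\psi_u|_{U}$ is the ``Speh-type'' character naturally attached to the Arthur block $(\tau,2)$, while the restriction of $\psi_u$ to the component of $N_{s',u}$ sitting inside $M$ is a generic character of a maximal unipotent of $\GL_{2n}\times \SO(V_0)$. With this set-up, the integration over $[N_{s',u}]$ splits into an integration over $[U]$ (recovering the Speh-type data) followed by a Whittaker-type integration on $[M]$.

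The next step is to realize $\CE_{\tau\otimes\sigma}$ as the residue at $s=\tfrac12$ of the Eisenstein series $E(s,h,\phi_{\tau\otimes\sigma})$ from \S\ref{subsection: residual rep}, and to unfold $\CF_{s',u}(E(s,\cdot,\phi_{\tau\otimes\sigma}))$ by the standard open-orbit/collapsing-of-sums procedure together with root-exchange lemmas. After the unfolding, the surviving term at the residue is a product of a Whittaker coefficient of $\tau$ on $\GL_{2n}(\BA)$ (non-zero because each $\tau_i$ is generic cuspidal, hence so is the isobaric sum $\tau$) and a global Whittaker/Bessel functional on $\sigma$ (non-zero because $\sigma$ is an irreducible cuspidal automorphic representation of the three-dimensional special orthogonal group $\SO(V_0)(\BA)$, i.e.\ of an inner form of $\PGL_2$). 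The hypothesis $L(\tfrac12,\tau\times\tau_0)\neq 0$, together with the orthogonality of each $\tau_i$, is what guarantees that the full iterated residue of the normalizing $L$-factors at $s=\tfrac12$ is a non-zero constant, and hence that the unfolded residue integral does not vanish. This produces $\CF_{s',u}(\CE_{\tau\otimes\sigma})\neq 0$, and then Theorem~\ref{ggsglobal1} immediately yields $\CF_{s,u}(\CE_{\tau\otimes\sigma})\neq 0$ for a neutral pair $(s,u)$ with the same $u$, which is the desired generalized Whittaker-Fourier coefficient attached to $[(2n)^2 1^3]$.

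The main obstacle is the bookkeeping in the unfolding: one must make the choice of $(s',u)$ precise enough that all the root-exchange and coset-collapsing steps go through cleanly, and then track how the poles of the various intertwining operators and the $L$-factors in the constant term of $E(s,\cdot,\phi_{\tau\otimes\sigma})$ combine to produce a non-zero leading Laurent coefficient at $s=\tfrac12$. The non-specialness of $[(2n)^2 1^3]$ (which has zero odd parts above its largest even part $2n$) plays no role in this proposition, but it is precisely what necessitates the next step in \S\ref{non-vanishing}, in which \cite{JLS16} is used to raise the orbit to the smallest special partition $[(2n+1)(2n-1)1^3]$ above $[(2n)^2 1^3]$.
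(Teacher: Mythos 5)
Your high-level strategy is the same as the paper's: apply Theorem~\ref{ggsglobal1} to pass from a carefully chosen \emph{non-neutral} Whittaker pair $(s',u)$, aligned with the parabolic $P=MU$, to a neutral pair for the same $u$. But your concrete choice of $(s',u)$ does not work, and the machinery you then invoke is both unnecessary and misapplied. If $\psi_u$ is simultaneously a non-trivial ``Speh-type'' character on $U$ and a generic character on a maximal unipotent of $M\simeq\GL_{2n}\times\SO(V_0)$, then $u$ has a regular nilpotent component inside $\fm$ \emph{plus} a non-zero component in $\fu^{-}$; such an element lies in a nilpotent orbit strictly larger than $[(2n)^2 1^3]$, not in $[(2n)^2 1^3]$ itself. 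Since Theorem~\ref{ggsglobal1} relates $\CF_{s',u}$ and $\CF_{s,u}$ only for the \emph{same} $u$, your argument (even if the unfolding could be carried out) would say nothing about the partition $[(2n)^2 1^3]$. A secondary problem: you attribute the non-vanishing of the unfolded residue to $L(\tfrac12,\tau\times\tau_0)\neq 0$; in fact that hypothesis is used only to guarantee that the Eisenstein series has a pole of order $r$ at $s=\tfrac12$, i.e.\ that $\CE_{\tau\otimes\sigma}\neq 0$ at all, and it plays no further role in this proposition. Also, when $\SO(V_0)$ is anisotropic, $\sigma$ admits no Whittaker functional, so the ``Whittaker/Bessel functional on $\sigma$'' you want to peel off is not available in general.

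The correct choice is much simpler and requires no unfolding or root exchange. Take $u=\sum_{i=1}^{2n-1}x_{-\alpha_i}(1)$, the regular nilpotent of the $\GL_{2n}$-factor of the Levi (acting on $V$ with Jordan type $[(2n)^2 1^3]$, so $\psi_u$ is \emph{trivial} on $U$), and take $s'=\diag(4n,4n-2,\ldots,2,0,0,0,-2,\ldots,-4n)$. Then $(s',u)$ is a Whittaker pair with $N_{s',u}=Z_{2n}\ltimes U_{2n}$, and $\CF_{s',u}(\phi)$ is exactly the constant term of $\phi$ along $U_{2n}$ followed by a non-degenerate Whittaker coefficient of the $\GL_{2n}$-component. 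This is non-zero because $\CE_{\tau\otimes\sigma}$ is supported on $(P,\tau\otimes\sigma)$ and $\tau$ is generic. Theorem~\ref{ggsglobal1} then gives the non-vanishing of $\CF_{s,u}$ for the neutral pair with $s=\diag(2n-1,2n-3,\ldots,1-2n,0,0,0,2n-1,\ldots,1-2n)$, which is the desired coefficient attached to $[(2n)^2 1^3]$.
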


\begin{proof}
Let $\alpha_i=e_i-e_{i+1}$ $(1 \leq i \leq 2n-1)$ be a subset of simple roots for $\SO^{V_0}_{4n+3}$. For $1 \leq i \leq 2n$, let $x_{\alpha_i}$ be the one-dimensional root subgroup in $\mathfrak{g}$ corresponding to $\alpha_i$.
By \cite[Section I.6]{W01}, there is only one nilpotent orbit $\CO$ corresponding to the partition $[(2n)^21^3]$. A representative of the nilpotent orbit $\CO$ can be taken to be 
$u = \sum_{i=1}^{2n-1} x_{-\alpha_i}(1)\,.$
Let $s$ be the following semi-simple element
$$s= \diag(2n-1, 2n-3, \ldots, 1-2n, 0, 0, 0, 2n-1, 2n-3, \ldots, 1-2n)\,.$$
Then it is clear that $(s,u)$ is a neutral pair.

We want to show that $\CF_{s,u}(\CE_{\tau \otimes \sigma})$ is non-zero.
To this end, we take another semisimple element
$$s'= \diag(4n, 4n-2, \ldots,  2, 0, 0, 0, -2, \ldots, -4n)\,.$$
It is clear that $(s',u)$ is a Whittaker pair. We consider $\CF_{s',u}(\CE_{\tau \otimes \sigma})$. Recall that $Q_{2n}$ is the parabolic subgroup of $\SO^{V_0}_{4n+3}$ with Levi subgroup isomorphic to $\GL_{2n} \times \SO_3(V_0)$ and unipotent radical subgroup $U_{2n}$. Then, by definition, for any $\phi \in \CE_{\tau \otimes \sigma}$, $\CF_{s',u}(\phi)$ is the constant term integral over $U_{2n}(F) \bs U_{2n}(\BA)$ combined with a non-degenerate Whittaker-Fourier coefficient of $\tau$.
Since $\CE_{\tau \otimes \sigma}$ is constructed from data $\tau \otimes \sigma$ on the Levi subgroup $\GL_{2n}(\BA) \times \SO_3(V_0)(\BA)$ with $\tau$ generic, the constant term integral over $U_{2n}(F) \bs U_{2n}(\BA)$ is non-zero and non-degenerate Whittaker-Fourier coefficients of $\tau$ are also non-zero. Hence, $\CF_{s',u}(\CE_{\tau \otimes \sigma})$ is non-zero.
Then, by Theorem \ref{ggsglobal1},
$\CF_{s,u}(\CE_{\tau \otimes \sigma})$ is also non-zero.
This completes the proof of the proposition.
\end{proof}

Next we prove the following.

\begin{prop}\label{toporbitraise}
$\CE_{\tau \otimes \sigma}$ has a non-zero generalized Whittaker-Fourier coefficient attached to the partition $[(2n+1)(2n-1)1^3]$.
\end{prop}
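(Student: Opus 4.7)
The plan is to combine Proposition \ref{toporbit} with the raising principle for non-special orthogonal partitions in the wave-front set established in \cite{JLS16}.

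First, I would verify that the orthogonal partition $\underline{p}=[(2n)^2 1^3]$ of $4n+3$ is \emph{not} special. Its largest even part is $2n$, yet no odd part strictly exceeds $2n$, whereas the definition of a special orthogonal partition requires an odd number of odd parts greater than the largest even part. The remaining special-ness condition (even number of odd parts between consecutive even parts) is vacuously satisfied since only one distinct even value appears. Hence $\underline{p}$ is orthogonal but not special.

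Next, by Proposition \ref{toporbit}, $\underline{p}$ belongs to the set of partitions arising from $\mathfrak{n}(\CE_{\tau\otimes\sigma})$. By the main theorem of \cite{JLS16}, every partition in $\mathfrak{p}^m(\CE_{\tau\otimes\sigma})$ is special orthogonal. Since $\underline{p}$ is not special it cannot be maximal, and thus some strictly larger orthogonal partition also lies in the wave-front set. A direct dominance-order comparison then shows that $[(2n+1)(2n-1)1^3]$ covers $[(2n)^2 1^3]$: the two partial-sum sequences differ only at the first position ($2n$ versus $2n+1$), so no partition of $4n+3$ can lie strictly in between. The covering partition has only odd parts, five in total, hence it is special orthogonal, and therefore it is the smallest special orthogonal partition strictly dominating $\underline{p}$.

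Finally, I would invoke the constructive raising statement of \cite{JLS16} — which takes a non-special orthogonal partition appearing in the wave-front set to a specific larger orthogonal partition that is also in the wave-front set — to conclude that $\CE_{\tau\otimes\sigma}$ has a non-zero generalized Whittaker-Fourier coefficient attached to $[(2n+1)(2n-1)1^3]$. Concretely, this is executed in the spirit of the proof of Proposition \ref{toporbit}: one selects a neutral pair $(s,u)$ for the larger partition and relates it, through a chain of intermediate Whittaker pairs that share a common nilpotent element with neutral pairs attached to the smaller partition, and then applies Theorem \ref{ggsglobal1} at each link of the chain to transport non-vanishing from $[(2n)^2 1^3]$ to $[(2n+1)(2n-1)1^3]$.

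The main obstacle I foresee is landing the raising procedure exactly at $[(2n+1)(2n-1)1^3]$ rather than merely at some strictly larger orthogonal partition. This demands the precise root-theoretic form of the raising operation in \cite{JLS16} together with the verification that the interpolating Whittaker-pair chain remains within the scope of Theorem \ref{ggsglobal1}, so that the known non-vanishing at the non-special partition actually propagates all the way to the identified covering special partition.
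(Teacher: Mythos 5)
Your proposal is correct and follows essentially the same route as the paper: observe that $[(2n)^2 1^3]$ is not special, identify $[(2n+1)(2n-1)1^3]$ as its special expansion (the smallest special orthogonal partition dominating it), and invoke the raising theorem of \cite{JLS16} (the paper cites \cite[Theorem 11.2]{JLS16}) to transport the non-vanishing from Proposition \ref{toporbit}. Your intermediate remark that non-maximality alone only yields \emph{some} larger partition in the wave-front set is a correctly identified insufficiency, and you rightly resolve it by appealing to the constructive form of the raising statement, exactly as the paper does.
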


\begin{proof}
By Proposition \ref{toporbit}, we know that $\CE_{\tau \otimes \sigma}$ has a non-zero generalized Whittaker-Fourier coefficient attached to the partition $[(2n)^2 1^3]$. It is clear that as an orthogonal partition, $[(2n)^2 1^3]$ is not special, and the smallest special partition which is greater than it is $[(2n+1)(2n-1)1^3]$, which is called the special expansion of the partition
$[(2n)^2 1^3]$. By \cite[Theorem 11.2]{JLS16}, we must have that
$\CE_{\tau \otimes \sigma}$ has a non-zero generalized Whittaker-Fourier coefficient attached to the partition $[(2n+1)(2n-1)1^3]$.
\end{proof}

Now we are ready to prove Part (a) of Theorem \ref{main thm for descent}.
In the following, given $\beta \in F^{\times}$, we do not distinguish $\beta$ with its square class or the quadratic form corresponding to it. 

\begin{thm}\label{nonvanishing}
There exists $\beta \in F^\times$, such that
$\CD_{\psi_{n,\beta}}(\CE_{\tau \otimes \sigma})$ is non-zero.
\end{thm}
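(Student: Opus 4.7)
The plan is to deduce Theorem \ref{nonvanishing} from Proposition \ref{toporbitraise} by realising the generalised Whittaker-Fourier coefficient attached to $[(2n+1)(2n-1)1^3]$ as an iterated integral whose inner step is exactly $\CD_{\psi_{n,\beta}}$ for some $\beta\in F^\times$. First, I would invoke Proposition \ref{toporbitraise} to produce a neutral pair $(s,u)$ with $u$ lying in some $F$-rational nilpotent orbit $\CO$ of type $[(2n+1)(2n-1)1^3]$ such that $\CF_{s,u}(\CE_{\tau\otimes\sigma})\neq 0$. By the classification of \cite{W01}, such an $F$-rational orbit is determined by the quadratic data on its Jordan blocks: the three $1$-blocks must carry the form of the anisotropic kernel $V_0$, the $(2n+1)$-block is pinned down by compatibility with $J$, and the $(2n-1)$-block carries a vector of norm $\beta\in F^\times/(F^\times)^2$. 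This $\beta$ is the parameter we will extract.

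Next, I would normalise the representative so that the associated Whittaker pair is compatible with the depth-$n$ Bessel datum. Concretely, the aim is to select $(s,u)$ in $\CO$ with $u=u_1+u_2$, where $u_1$ corresponds to the depth-$n$ Bessel with vector $y_\beta$ (so that $N_n\subseteq N_{s,u}$ and $\psi_u|_{N_n}=\psi_{n,\beta}$), and $u_2$ lies in $\mathfrak{so}(V_{0,\beta})\subset\Lie(L_{n,\beta})$ and realises the subpartition $[(2n-1)1^3]$ on $L_{n,\beta}=\SO_{2n+2,\beta}$. The semisimple $s$ is assembled from the usual grading element for the depth-$n$ Bessel plus the $\mathfrak{sl}_2$-semisimple element of the $(2n-1)$-block inside $\mathfrak{so}(V_{0,\beta})$. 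With this choice, the unipotent $N_{s,u}$ factors as $N_n\rtimes M$, where $M\subset L_{n,\beta}$ is the unipotent subgroup attached to the partition $[(2n-1)1^3]$, and $\psi_u$ restricts to $\psi_{n,\beta}$ on $N_n$ and to the Whittaker-type character $\psi_{u_2}$ on $M$.

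With the factorisation above in place, Fubini yields
\begin{equation*}
\CF_{s,u}(\phi)(g)=\int_{[M]}\phi^{\psi_{n,\beta}}(m\,g)\,\psi_{u_2}^{-1}(m)\,dm
\end{equation*}
for every $\phi\in\CE_{\tau\otimes\sigma}$ and $g\in\SO^{V_0}_{4n+3}(\BA)$. That is, $\CF_{s,u}(\phi)$ is a $\psi_{u_2}$-Whittaker-Fourier coefficient of the restriction of $\phi^{\psi_{n,\beta}}$ to $L_{n,\beta}(\BA)$. If $\CD_{\psi_{n,\beta}}(\CE_{\tau\otimes\sigma})$ were zero for the $\beta$ attached to $\CO$, then $\phi^{\psi_{n,\beta}}\equiv 0$ on $L_{n,\beta}(\BA)$, which would force the right-hand side, and hence $\CF_{s,u}(\CE_{\tau\otimes\sigma})$, to vanish, contradicting Proposition \ref{toporbitraise}. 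Consequently there exists $\beta\in F^\times$ with $\CD_{\psi_{n,\beta}}(\CE_{\tau\otimes\sigma})\neq 0$.

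The main obstacle is the compatibility step: producing a neutral Whittaker pair in $\CO$ whose associated unipotent/character pair matches $(N_n,\psi_{n,\beta})$ on the nose. This is a combinatorial check at the Lie-algebra level, combined, if necessary, with conjugation by the Levi $\GL_n\times\SO(V^{(n)})$ of $Q_n$ to bring the chosen representative into standard position. If the most natural representative provided by Proposition \ref{toporbitraise} is not already in this form, I would first apply Theorem \ref{ggsglobal1} to a suitable non-neutral Whittaker pair $(s',u)$ (with $N_{s',u}$ arranged to contain $N_n$) in order to retain the non-vanishing while forcing the $(N_n,\psi_{n,\beta})$ compatibility; the Fubini argument above then applies verbatim.
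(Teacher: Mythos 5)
Your overall strategy is the same as the paper's: extract $\beta$ from the $F$-rational orbit data of the $[(2n+1)(2n-1)1^3]$-coefficient and peel it apart into a depth-$n$ Bessel coefficient followed by a Whittaker-type coefficient on the stabilizer $L_{n,\beta}$. But the pivotal step — "with this choice, $N_{s,u}$ factors as $N_n\rtimes M$ and Fubini applies" — is false, and this is precisely where all the work in the paper's proof lives. Write $s=s_1+s_2$ with $s_1=\diag(2n,2n-2,\dots,2,0,\dots,0,-2,\dots,-2n)$ the neutral element for the Bessel orbit $[(2n+1)1^{2n+2}]$ (so that $N_n=\exp(\mathfrak{g}^{s_1}_{\geq 2})$) and $s_2$ the neutral element for $[(2n-1)1^3]$ inside $\mathfrak{so}(V_{0,\beta}\oplus\text{hyperbolics})$. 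The root space $e_n-e_j$, where $e_j$ is a middle coordinate with $s_2$-eigenvalue $2n-2$, has $s_1$-eigenvalue $2$ but $s$-eigenvalue $4-2n<2$; its opposite $e_j-e_n$ has $s$-eigenvalue $2n-4\geq 2$ for $n\geq 3$. So $\mathfrak{g}^{s}_{\geq 2}$ neither contains $\mathrm{Lie}(N_n)$ nor is contained in $\mathrm{Lie}(N_n)+\mathrm{Lie}(M)$, and no conjugation by the Levi of $Q_n$ can repair this, since the obstruction is the grading itself. Your fallback — invoking Theorem \ref{ggsglobal1} to replace the neutral pair by a pair $(s',u)$ with $N_{s',u}\supseteq N_n$ — does not work either: that theorem transfers non-vanishing \emph{from} a degenerate pair \emph{to} the neutral pair, which is the opposite of the direction you need; moreover a deformation $s'=s_1+cs_2$ with $c\neq 1$ is no longer a Whittaker pair for $u=u_1+u_2$ because $u_2\in\mathfrak{g}^{s'}_{-2c}$.

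The paper closes this gap with three ingredients you would need to supply. First, \cite[Proposition 8.1]{JLS16} refines the parametrization of the non-vanishing orbit to quadratic data $\{\beta,-\beta,q_{V_0}\}$ (note the descent parameter $\beta$ sits on the $(2n+1)$-block, not the $(2n-1)$-block as in your description), which is what makes $u_1$ a representative of the $\psi_{n,\beta}$-Bessel orbit and puts $u_2$ inside its stabilizer $\SO_{2n+2,\beta}$. Second, the mismatch of root spaces described above is handled by the exchange-of-roots lemma \cite[Lemma 6.4]{JL16a} applied to the quadruple $(N'_{s,u},\psi_u,\{R_k\},\{C_k\})$, replacing the integral over $[N_{s,u}]$ by one over a group that does contain most of $N_n$. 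Third, even after the exchange the inner integral runs over a subgroup $N_0$ of $N_n$ missing the coordinates $x_{n,j}$, $1\leq j\leq n-1$; one must Fourier-expand along the abelian complement $X$ and check that every character occurring corresponds to a nilpotent $u_1+u_1'$ lying in the \emph{same} orbit as $u_1$ (the explicit conjugation $u_1=g(u_1+u_1')g^{-1}$ in the paper), so that each term of the expansion is again a $\psi_{n,\beta}$-Bessel coefficient. Without these steps the proposed Fubini identity does not exist, so as written the argument does not establish the theorem.
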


\begin{proof}
By Proposition \ref{toporbitraise}, we know that $\CE_{\tau \otimes \sigma}$ has a non-zero generalized Whittaker-Fourier coefficient attached to the partition $[(2n+1)(2n-1)1^3]$. By \cite[Section I.6]{W01}, nilpotent orbits corresponding to the partition $[(2n+1)(2n-1)1^3]$ are parametrized by certain quadratic forms $\{\beta_{2n+1}, \beta_{2n-1}, q_{V_0}\}$, corresponding to the parts $(2n+1), (2n-1)$ and $1^3$, where $\beta_{2n+1}$ and $\beta_{2n-1}$ are square classes, and $q_{V_0}$ is the quadratic form in $3$ variables on $V_0$ (see Section 2.1). This parametrization can be refined according to 
\cite[Proposition 8.1]{JLS16}, that is, $\CE_{\tau \otimes \sigma}$ actually has a non-zero generalized Whittaker-Fourier coefficient attached to the nilpotent $\CO$, corresponding to the partition $[(2n+1)(2n-1)1^3]$ and parametrized by quadratic forms $\{\beta,-\beta,q_{V_0}\}$ for some $\beta \in F^{\times}$.
Note that the normalization of the bilinear form for the irreducible representation of $\mathfrak{sl}_2(\mathbb{C})$ of dimension $i$ in \cite{JLS16} differs from the one in \cite{W01} by the factor $(-1)^{[(i-1)/2]}$ (See \cite[Section 7]{JLS16}).
In the following, we show that such a $\beta$ will suffice for the theorem.

For the nilpotent orbit $\CO$ above which is parametrized by quadratic forms $\{\beta, -\beta, q_{V_0}\}$, one can take a representative $u = u_1 + u_2$, where
$$u_1 = \sum_{i=1}^{n-1}x_{-\alpha_i}(1)+x_{e_{2n}-e_n}(1)+x_{-e_{2n}-e_n}(\beta/2)\,,$$
$u_2$ is the embedding into $\SO_{4n+3}^{V_0}$ of any representative
of a nilpotent orbit in the Levi part of the stabilizer of $u_1$ which is $\SO_{2n+2,\beta}$. This nilpotent orbit in $\SO_{2n+2,\beta}$ corresponds to the partition $[(2n-1)1^3]$ and is parametrized by
quadratic forms $\{-\beta, q_{V_0}\}$.
Let $s_1$ be the following semisimple element of the Lie algebra of $\SO_{4n+3}^{V_0}$:
$$\diag(2n, 2n-2, \ldots, 2, 0, \ldots, 0, -2, \ldots, 2-2n, -2n)\,.$$
Then it is clear that $(s_1, u_1)$ is a neutral pair.

We make $u_2$ explicit as follows.
First, we give a representative of the nilpotent orbit in $\SO_{2n+2,\beta}$ corresponding to the partition $[(2n-1)1^3]$ and parametrized by the
quadratic forms $\{-\beta, q_{V_0}\}$.
Let $s_2$ be the following semisimple element of the Lie algebra of $\SO_{2n+2, \beta}$:
$$\diag(2n-2, 2n-4, \ldots, 2, 0, 0,0,0,-2, \ldots, 4-2n, 2-2n)\,.$$
Let $N_{s_2}=\exp(\mathfrak{g}^{s_2}_{\geq 2})$, which is the unipotent radical of the parabolic subgroup of $\SO_{2n+2, \beta}$ with Levi subgroup isomorphic to $\GL_1^{n-1} \times \SO_{4, \beta}$, here $\SO_{4, \beta}=\SO(V_{0,\beta})$ (see \S \ref{subsection: construction of automorphic descent}). Write elements of $N_{s_2}$ as 
$n = \begin{pmatrix}
z & z x & y\\
0 & I_{4} & x'\\
0 & 0 & z^*
\end{pmatrix},$
where $z$ is an upper triangular matrix in $\GL_{n-1}$, $x \in \Mat_{(n-1) \times 4}$, and $x'$ is defined in Section \ref{subsection: construction of automorphic descent}. 
Let
$$u_2 = \sum_{i=1}^{n-2}x_{-\alpha_i}(1)+x_{e_{n}-e_{n-1}}(a)+x_{e_{n+1}-e_{n-1}}(b)+x_{-e_{n+1}-e_{n-1}}(c) + x_{-e_{n}-e_{n-1}}(d)\,,$$
 such that $(a,b,c,d) \in (F^{\times})^4$ is an anisotropic vector with respect the quadratic form of $\SO_{4,\beta}$. Then $(s_2,u_2)$ is a neutral pair and $u_2$ is a  representative of the nilpotent orbit in $\SO_{2n+2,\beta}$ corresponding to the partition $[(2n-1)1^3]$ and parametrized by
quadratic forms $\{-\beta, q_{V_0}\}$, and $N_{s_2}=N_{s_2,u_2}$ (see \S \ref{FCdefinition}). 
Then, we embed $s_2$ into the Lie algebra of $\SO_{4n+3}^{V_0}$ as follows:
$$\diag(0,\ldots, 0, 2n-2, 2n-4, \ldots, 2, 0, 0,0,0,0,-2, \ldots, 4-2n, 2-2n, 0, \ldots, 0)\,,$$
which is still denoted by $s_2$. We embed elements of $N_{s_2,u_2}$ into $\SO_{4n+3}^{V_0}$ as follows:
$$n = \begin{pmatrix}
z & z x & y\\
0 & I_{4} & x'\\
0 & 0 & z^*
\end{pmatrix} \mapsto \diag\left(I_n, \begin{pmatrix}
z & z \overline{x} & y\\
0 & I_{5} & \overline{x}'\\
0 & 0 & z^*
\end{pmatrix}, I_n\right),$$
where $\overline{x} = \{x_4, x^{(4)}, -\frac{2x_4}{\beta}\}$ when we write $x=\{x^{(4)}, 2x_4\}$ with $2x_4$ being the last column of $x$.
Still denote the image subgroup by $N_{s_2,u_2}$.
Similarly, we can embed $u_2$ into the Lie algebra of $\SO_{4n+3}^{V_0}$, and still denote the image by $u_2$.

Let $s=s_1 + s_2$. Then it is clear that $(s,u)$ is a neutral pair.
From the above discussion, there is a $\varphi \in \CE_{\tau \otimes \sigma}$ such that the generalized Whittaker-Fourier coefficient $\CF_{s,u}(\varphi) \neq 0$, for some choice of $u_2$. We fix such a choice.

Recall that
\begin{equation}\label{nvequ1}
\CF_{s,u}(\varphi)(g):=\int_{[N_{s,u}]} \varphi(ng) \psi_u^{-1}(n)\, \mathrm{d}n\,.
\end{equation}
Note that the elements of $N_{s,u}$ have the form:
$$n(x,y,z,w,\wt{n}):=\begin{pmatrix}
I_n & 0 & 0\\
w' & I_{2n+3} & 0\\
0 & w& I_n
\end{pmatrix}\begin{pmatrix}
z & z x & y\\
0 & \wt{n} &\wt{n} x'\\
0 & 0& z^*
\end{pmatrix},$$
where $z$ is an upper triangular matrix in $\GL_{n}$,
$\diag(I_n,\wt{n}, I_n)$ is in $N_{s_2,u_2}$, and $x \in \Mat_{n\times (2n+3)}$ and $w \in \Mat_{(2n+3)\times n}$ with some entries being zero.

To proceed, we define some unipotent subgroups. For $k=2, 3, \ldots, n-1$, let $R_k$ be the unipotent subgroup consists of elements of the form
$n=\begin{pmatrix}
I_n & r & 0\\
0 & I_{2n+3} & r'\\
0 & 0 & I_n
\end{pmatrix},$
such that $r_{i,j}$ are all zero except possibly when $i=k$ and
$1 \leq j \leq k-1$.
Similarly, for $k=2, 3, \ldots, n-1$, let $C_k$ be the unipotent subgroup consists of elements of the form
$n=\begin{pmatrix}
I_n & 0 & 0\\
c' & I_{2n+3} & 0\\
0 & c & I_n
\end{pmatrix},$
such that $c_{i,j}$ are all zero except possibly when $j=k+1$ and
$1 \leq i \leq k-1$.
One can see that $\prod_{k=1}^{n-1} R_k \cap N_{s,u} = \{I_{4n+3}\}$ and
$\prod_{k=1}^{n-1} C_k$ consists of all the elements in $N_{s,u}$ of the form $n(0,0,I_n,w,I_{2n+3})$.
Let $N_{s,u}'$ be the subgroup of $N_{s,u}$ consisting of all the elements of the form $n(x,y,z,0,\wt{n})$.

Now we apply \cite[Lemma 6.4]{JL16a} (It is clear that all the assumptions there hold,) to the quadruple
$$(N_{s,u}', \psi_u, \{R_k\}_{k=1}^{n-1}, \{C_k\}_{k=1}^{n-1})\,,$$
and obtain that the following integral
\begin{equation}\label{nvequ2}
\int_{[N'_{s,u}\prod_{k=1}^{n-1} R_k]} \varphi(ng) \psi_u^{-1}(n)\, \mathrm{d}n\neq 0,
\end{equation}
where elements in $N'_{s,u}\prod_{k=1}^{n-1} R_k$ have the form
$n(x,y,z,\wt{n}):=\begin{pmatrix}
z & z  x & y\\
0 & \wt{n} & \wt{n}  x'\\
0 & 0 & z^*
\end{pmatrix},$
here $z$ is an upper triangular matrix in $\GL_{n}$,
$\diag(I_n, \wt{n}, I_n)$ is in $N_{s_2,u_2}$, and $x \in \Mat_{n\times (2n+3)}$
with only $x_{n,j}=0$, for $1 \leq j \leq n-1$.

Then one finds that, as an inner integral of \eqref{nvequ2}, the integral
\begin{equation}\label{nvequ3}
\int_{[N_0]} \varphi(ng) \psi_{u_1}^{-1}(n)\, \mathrm{d}n
\end{equation}
is non-vanishing, where $N_0$ consists of all elements in
$N'_{s,u}\prod_{k=1}^{n-1} R_k$ having the form
$n(x,y,z,I_{2n+3})$. Note that here $x \in \Mat_{n \times (2n+3)}$
with only $x_{n,j}=0$, for $1 \leq j \leq n-1$.

Let $X$ be the unipotent subgroup consisting of elements of the form
$$n=\begin{pmatrix}≤
I_n & x & 0\\
0 & I_{2n+3} & x'\\
0 & 0 & I_n
\end{pmatrix},$$
such that $x_{i,j}$ are all zero except possibly when $i=n$ and
$1 \leq j \leq n-1$. Then $X$ is an abelian subgroup, preserving $N_0$ and the character $\psi_{u_1}$.
Taking Fourier expansion of the integral \eqref{nvequ3} along $[X]$, there exists a non-zero Fourier coefficient. Assume that one such non-zero Fourier coefficient is given by a Lie algebra element
$u_1'=\begin{pmatrix}
0_n & 0 & 0\\
y' & 0_{2n+3} & 0\\
0 & y & 0_n
\end{pmatrix}$, where $y'_{i,j}$ are all zero except possibly when $j=n$ and
$1 \leq i \leq n-1$.
Then we can rewrite this Fourier coefficient as
\begin{equation}\label{nvequ4}
\int_{[N_{s_1,u_1+u_1'}]} \varphi(ng) \psi_{u_1+u_1'}^{-1}(n) \, \mathrm{d}n\,.
\end{equation}

{We claim that $u_1$ and $u_1+u_1'$ are in the same nilpotent orbit, that is, one can find a element $g \in \SO_{4n+3}^{V_0}$ such that $u_1 = g (u_1+u_1') g^{-1}$. Indeed, write $u_1=\begin{pmatrix}
x & 0 & 0\\
y' & 0_{2n+3} & 0\\
0 & y & x^*
\end{pmatrix}$ and $u_1+u_1'=\begin{pmatrix}
w & 0 & 0\\
z' & 0_{2n+3} & 0\\
0 & z & w^*
\end{pmatrix}$, then $x=w \in \mathrm{Mat}_{(2n+3) \times n}$ and all columns of $y'$ and $z'$ are zero except the last columns $y'_n$ and $z'_n$. From the form of $u_1'$ above, regarding our choice of basis, one can see that as two vectors in an orthogonal space with action of $\SO_{2n+3}^{V_0}$, $y'_n$ and $z'_n$ have the same length, which equals $\beta\in F^\times$. Hence, one can find $h \in \SO_{2n+3}^{V_0}$ such that $y'_n = z'_n h^{-1}$. Let $g = \begin{pmatrix}
I_n & 0 & 0\\
0 & h & 0\\
0 & 0 & I_n
\end{pmatrix}$, then we have $u_1 = g (u_1+u_1') g^{-1}$.
Therefore, 
$u_1+u_1'$ is also a representative of the nilpotent orbit corresponding to the partition $[(2n+1)1^{2n+2}]$ and parametrized by quadratic forms $\{\beta, q\}$, where $q$ is a quadratic form in $(2n+2)$-variables such that $\beta \oplus q$ is isomorphic to $q_{V_0}$ composing with $n$ hyperplanes.
In other words, this nilpotent orbit is the same as
the one corresponding to the partition $[(2n+1)1^{2n+2}]$ and parametrized by quadratic forms $\{\beta, q_{V_0}\}$.}
It follows that $(s_1, u_1+u_1')$ is a neutral pair.
Therefore, we obtain that
the generalized Whittaker-Fourier coefficient $\CF_{s_1, u_1}$ is non-zero, that is,
$\CD_{\psi_{n,\beta}}(\CE_{\tau \otimes \sigma})$ is non-zero.

This completes the proof of the theorem.
\end{proof}

\begin{rmk}\label{remark for non-vanishing in general cases}
It is clear that the argument above can be easily modified and applied to the case that $\sigma$ is a cuspidal representation of $\SO(V_0)(\BA)$ with $\dim_F(V_0)=2r+1$, $r \in \BZ_{>0}$, and the residual representation $\CE_{\tau \otimes \sigma}$ for $\SO^{V_0}_{4n+2r+1}(\BA)$.
\end{rmk}

\section{On the reciprocal branching problem}\label{GGP}

In this section, we consider the reciprocal branching problem and prove Parts (b) and (c) of Theorem \ref{main thm for descent}.

For simplicity, write $\pi_\beta=\CD_{\psi_{n,\beta}}(\CE_{\tau\otimes\sigma})$.
By Theorem \ref{nonvanishing}, there exists $\beta\in F^\times$ such that $\pi_\beta\neq 0$.
Moreover, if the representation $\sigma$ satisfies Assumption \ref{depth assumption}, or the $\beta\in F^\times$ in Theorem \ref{nonvanishing} and $\omega_\tau$ satisfy the condition of Proposition \ref{prop for special beta}, then $\pi_\beta$ is cuspidal by Propositions \ref{prop for cuspidality}, \ref{prop for special beta}.
By the uniqueness of local Bessel models (see \cite{AGRS, GGP12, JSZ12, SZ}), one has a multiplicity free
direct sum decomposition:
$$\pi_{\beta}=\bigoplus_i \pi_\beta^{(i)}\,,$$
where $\pi_\beta^{(i)}$'s are non-zero mutual-inequivalent irreducible cuspidal automorphic representations of the group $G_{n+1,\beta}(\BA)=\SO_{2n+2,\beta}(\BA)$.
This proves Part (b) of Theorem \ref{main thm for descent}.

Let $\pi=\pi_\beta^{(i)}$ be any irreducible summand of $\pi_{\beta}$. By construction, there exists an automorphic form $\varphi_\pi\in \pi$ such that the inner product
\begin{equation}\label{zeta integral}
\pair{\varphi_\pi, \Res_{s=1/2}E^{\psi_{n,\beta}}(s, \cdot, \phi_{\tau\otimes\sigma})}\neq 0\,,
\end{equation}
for some choice of data. By Corollary 4.4 of \cite{JZ15}, the pair $(\pi,\sigma)$ has a non-zero Bessel period. 
In particular, we have

\begin{prop}\label{FCfordescent}
The automorphic descent $\pi_\beta=\CD_{\psi_{n,\beta}}(\CE_{\tau \otimes \sigma})$ has a non-zero Fourier coefficient attached to the nilpotent orbit corresponding to the partition $[(2n-1)1^3]$ and parameterized by quadratic forms $\{-\beta, q_{V_0}\}$.
\end{prop}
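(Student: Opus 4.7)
The plan is to deduce Proposition \ref{FCfordescent} directly from the non-vanishing of the Bessel period for $(\pi,\sigma)$ on $(\SO_{2n+2,\beta},\SO_3^{V_0})$ established just above via \cite[Corollary 4.4]{JZ15}. The key is to recognize the inner unipotent integral in that Bessel period as precisely the generalized Whittaker-Fourier coefficient of $\pi$ attached to the stated orbit.

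First I would write out the Bessel period explicitly in the sense of \cite[\S 2]{JZ15}: under the orthogonal decomposition $V_{0,\beta}=V_0\oplus \langle v_0\rangle$ with $\pair{v_0,v_0}=-\beta$, let $R=R'\cdot N'$ be the parabolic of $\SO_{2n+2,\beta}$ stabilizing a maximal isotropic flag of length $n-1$, so that $R'\cong \GL_1^{n-1}\times \SO_{4,\beta}$, and let $\psi_{N'}$ be the Bessel character on $N'(\BA)$ built from $v_0$. Its stabilizer in $R'$ equals $\SO(v_0^\perp\cap V_{0,\beta})=\SO_3^{V_0}$, and the Bessel period takes the form
\begin{equation*}
\CB(\varphi_\pi,\varphi_\sigma)=\int_{[\SO(V_0)]}\left(\int_{[N']}\varphi_\pi(n'g)\psi_{N'}^{-1}(n')\,\mathrm{d}n'\right)\ov{\varphi_\sigma(g)}\,\mathrm{d}g.
\end{equation*}
Next I would identify the inner integral $\CF_{N',\psi_{N'}}(\varphi_\pi)$ with the generalized Whittaker-Fourier coefficient of $\pi$ attached to the $F$-rational nilpotent orbit in $\mathfrak{so}_{2n+2,\beta}$ corresponding to $[(2n-1)1^3]$ parameterized by $\{-\beta,q_{V_0}\}$. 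Under the Waldspurger parametrization \cite[\S I.6]{W01}, the parabolic $R$ selects an orbit of type $[(2n-1)1^3]$, while the generic vector $v_0$ of length $-\beta$ and the anisotropic kernel $V_0$ attach the quadratic data $-\beta$ and $q_{V_0}$ to the parts $(2n-1)$ and $1^3$ respectively. The identification can be pinned down by exhibiting a neutral pair $(s_2,u_2)$ in $\mathfrak{so}_{2n+2,\beta}$ (the same $(s_2,u_2)$ already used in the proof of Theorem \ref{nonvanishing}) for which $N_{s_2,u_2}=N'$ and $\psi_{u_2}=\psi_{N'}$, which is a direct matrix computation.

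Combining these two steps, if $\CB(\varphi_\pi,\varphi_\sigma)\neq 0$, then the inner integral $g\mapsto \CF_{N',\psi_{N'}}(\varphi_\pi)(g)$ cannot vanish identically on $\SO(V_0)(\BA)$, hence $\CF_{N',\psi_{N'}}(\varphi_\pi)$ is a non-zero automorphic function on $\SO_{2n+2,\beta}(\BA)$. By the identification above, this is a non-vanishing generalized Whittaker-Fourier coefficient of $\pi=\pi_\beta^{(i)}$ attached to $[(2n-1)1^3]$ parameterized by $\{-\beta,q_{V_0}\}$, and hence of the descent $\pi_\beta$, yielding the proposition. The main obstacle, which I expect to be bookkeeping rather than substantive, is the matching in the second step between the Bessel datum $(N',\psi_{N'})$ and the Waldspurger nilpotent datum $([(2n-1)1^3],\{-\beta,q_{V_0}\})$: both encode the same linear-algebraic information (a flag plus a generic vector in an anisotropic complement), but aligning them requires care with sign conventions, in particular the $(-1)^{[(i-1)/2]}$ discrepancy between the conventions of \cite{JLS16} and \cite{W01} already flagged in the proof of Theorem \ref{nonvanishing}, so that the quadratic datum on the $(2n-1)$ part appears as $-\beta$ on the nose rather than some twist.
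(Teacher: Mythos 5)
Your proposal follows exactly the route the paper intends: the proposition is stated as an immediate consequence ("in particular") of the non-vanishing of the Bessel period for $(\pi_\beta^{(i)},\sigma)$ obtained from the inner product \eqref{zeta integral} and \cite[Corollary 4.4]{JZ15}, since the Bessel period integrates the $[(2n-1)1^3]$-coefficient (with quadratic data $\{-\beta,q_{V_0}\}$, the datum $-\beta$ coming from the vector $v_0$ with $\pair{v_0,v_0}=-\beta$ complementary to $V_0$ in $V_{0,\beta}$) against $\ov{\varphi_\sigma}$ over $[\SO(V_0)]$. Your write-up merely makes explicit the identification of the Bessel datum $(N',\psi_{N'})$ with the neutral pair $(s_2,u_2)$ from the proof of Theorem \ref{nonvanishing}, which the paper leaves implicit; the argument is correct.
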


To get a connection with the reciprocal branching problem we have introduced in \S \ref{intro}, we need to study the Arthur parameter $\phi_\pi$ of each irreducible summand $\pi$ of $\pi_{\beta}$. In particular, we hope that $\phi_\pi$ is generic.
Combining with the local results we have obtained in \S \ref{local aspects}, we have the following proposition, which is Part (c) of Theorem \ref{main thm for descent}.

\begin{prop}\label{prop for parameter}
Assume that $\omega_\tau=1$, and the $\beta\in F^\times$ in Theorem \ref{nonvanishing} is not a square.
Then
each irreducible summand $\pi$ of the descent $\pi_\beta$
belongs to a global Arthur packet corresponding to a generic global Arthur parameter
$\phi_\pi$. The parameter $\phi_\pi$ has the central character $\eta_{V_{0,\beta}}$, and satisfies the property that
$L(\frac{1}{2}, \phi_\pi\times\phi_{\tau_0})\neq 0$ and $(\pi, \sigma)$ has a non-zero Bessel period.
\end{prop}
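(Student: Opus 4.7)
The plan is to establish three things for each irreducible summand $\pi=\pi_\beta^{(i)}$ of the twisted descent $\pi_\beta$: first, that $\pi$ belongs to a global Arthur packet with generic parameter $\phi_\pi$ of central character $\eta_{V_{0,\beta}}$; second, that the Bessel period for $(\pi,\sigma)$ is non-zero; third, that $L(\tfrac12,\phi_\pi\times\phi_{\tau_0})\neq 0$. Cuspidality of $\pi$, a prerequisite for attaching an Arthur parameter, is available from Proposition \ref{prop for special beta}, since $\omega_\tau=1$ and $\beta\notin(F^\times)^2$; Arthur's endoscopic classification therefore attaches to $\pi$ a global Arthur parameter $\phi_\pi$ on the quasi-split inner form of $\SO_{2n+2,\beta}$.

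To identify $\phi_\pi$, I would compare Satake parameters at almost every unramified place. At any finite place $v$ where $\tau_v$, $\sigma_v$, $\pi_v$ and $\psi_v$ are all unramified, write $\tau_v$ in the form (\ref{unramified with trivial central character}). Proposition \ref{prop for unramified constituents I.1}(1) when $H(F_v)$ is $F_v$-split, and Proposition \ref{prop for unramified constituents I.2} otherwise, both place the spherical constituent $\pi_v$ inside the principal series induced from $\mu_{1,v}\otimes\cdots\otimes\mu_{n,v}\otimes\mathbf{1}$ on the Borel of $\SO_{2n+2,\beta}(F_v)$, so the Satake parameter of $\pi_v$ reads $\{\mu_{1,v}^{\pm 1},\dots,\mu_{n,v}^{\pm 1},1,\eta_{V_{0,\beta},v}\}$. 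Strong multiplicity one for $\GL_{2n+2}$ then forces the weak functorial transfer of $\phi_\pi$ to be
\[
\phi_\pi\;=\;\tau\,\boxplus\,\mathbf{1}\,\boxplus\,\eta_{V_{0,\beta}}.
\]
The summands are pairwise distinct unitary cuspidal self-dual representations of orthogonal type, so $\phi_\pi$ is a generic global Arthur parameter of dimension $2n+2$, and its central character is $\omega_\tau\cdot\eta_{V_{0,\beta}}=\eta_{V_{0,\beta}}$.

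The remaining two assertions follow from the non-vanishing of the Eisenstein pairing (\ref{zeta integral}) combined with the main results of \cite{JZ15}: Corollary 4.4 of \emph{loc.\ cit.}\ identifies the pairing (\ref{zeta integral}) with the Bessel period of $(\pi,\sigma)$ and thereby yields its non-vanishing, while the same reciprocal-descent identity relating central $L$-values to period integrals yields $L(\tfrac12,\phi_\pi\times\phi_{\tau_0})\neq 0$. The principal obstacle lies in the Satake comparison above: Arthur's classification a priori permits that $\phi_\pi$ contain Speh summands $(\rho,d)$ with $d\geq 2$, which would be non-generic. Such summands contribute at almost every unramified place Satake values of absolute value $q_v^{(d-1)/2}\cdot|\alpha|$; combined with the Jacquet-Shalika bounds for the $\mu_{i,v}$, these are incompatible with the bounded Satake data $\{\mu_{i,v}^{\pm 1},1,\eta_{V_{0,\beta},v}\}$ obtained above. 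Ruling out such Speh contributions closes the genericity argument and completes the identification of $\phi_\pi$.
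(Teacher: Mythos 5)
Your overall architecture (unramified local computation at good places, Arthur's classification for genericity, then \cite[Corollary 4.4]{JZ15} for the Bessel period and the period-to-$L$-value implication for the central value) matches the paper's, and the cuspidality, central character, Bessel period and $L$-value parts are fine. But the step where you identify $\phi_\pi$ has a genuine gap. You assert that at \emph{almost every} unramified place the spherical constituent $\pi_v$ sits inside $\Ind\,\mu_{1,v}\otimes\cdots\otimes\mu_{n,v}\otimes\mathbf{1}$, citing Proposition \ref{prop for unramified constituents I.1}(1) for the split places. That part of the proposition, however, applies only when $\beta\in F_v^\times\setminus(F_v^\times)^2$, which happens at a set of places of density $1/2$, not at almost all places. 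At the (infinitely many) split places where $\beta\in(F_v^\times)^2$, the relevant statement is Part (2) of that proposition, which only places $\pi_v$ inside a direct sum that also contains $\Ind\,\mu_{1,v}\otimes\cdots\otimes\mu_{n,v}\otimes\lambda$ (with $\lambda$ coming from the restriction of $\sigma_v$, i.e.\ from the Satake data of $\tau_{0,v}$) and terms involving $\mu_{t,v}(\det_{\GL_2})$, whose Satake entries include $\mu_{t,v}q_v^{\pm 1/2}$. So the Satake parameter of $\pi_v$ is \emph{not} pinned down to $\{\mu_{i,v}^{\pm1},1,\eta_{V_{0,\beta},v}\}$ at those places. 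Consequently the appeal to strong multiplicity one for $\GL_{2n+2}$ — which needs agreement at all but finitely many places — does not go through, and neither does your closing argument ruling out Speh summands, since it relies on the same almost-everywhere Satake data. (A secondary, unaddressed point: even granting the matching, the summands of $\tau\boxplus\mathbf{1}\boxplus\eta_{V_{0,\beta}}$ need not be pairwise distinct, e.g.\ if some $\tau_i$ is a quadratic character or $\eta_{V_{0,\beta}}$ is trivial.)

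The repair is to aim lower, which is what the paper does: the proposition only requires that $\phi_\pi$ be \emph{generic}, not an explicit formula for it (the paper explicitly defers the identification of $\phi_\pi$ to Remark \ref{rmk for global parameters}). At the infinitely many places with $\beta\notin(F_v^\times)^2$, Proposition \ref{prop for unramified constituents I.1}(1) together with Tadi\'c's bound $0\le s_i<\tfrac12$ shows that $\pi_v$ is the \emph{irreducible fully induced} generic unramified representation $\chi_1|\cdot|^{s_1}\times\cdots\times\chi_n|\cdot|^{s_n}\rtimes 1$. Since a non-generic global Arthur parameter forces $\pi_v$ to be non-generic at almost all finite places, the existence of infinitely many generic local components already forces $\phi_\pi$ to be generic, with no need for an almost-everywhere Satake comparison.
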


\begin{proof}
By Part (b) of Theorem 
\ref{main thm for descent}, under the assumption of the proposition, $\pi_\beta$ is cuspidal, so is $\pi$. 
Write $\pi=\otimes_v' \pi_v$, and consider all the finite places $v$ (infinitely many) such that both $\CE_{\tau \otimes \sigma,v}$ and $\psi_v$ are unramified and $\beta\in F_v^\times-{(F_v^\times)}^2$. Note that $\tau_v$ is a self-dual irreducible generic unitary unramified representation, which has the following form (see \cite{Ta86})
$$\tau_v = \chi_1 \lvert \cdot \rvert^{s_1} \times \cdots \times \chi_n \lvert \cdot \rvert^{s_n} \times  \chi_n^{-1} \lvert \cdot \rvert^{-s_n} \times \cdots \times \chi_1^{-1} \lvert \cdot \rvert^{-s_1}\,,$$
where $\chi_i$ is a unitary unramified character of $F_v^{\times}$ and $0 \leq s_i < \frac{1}{2}$, for each $1 \leq i \leq n$. 
By Proposition \ref{prop for unramified constituents I.1}, Part (1), 
$\pi_v$ is the fully-induced irreducible generic unitary unramified representation
$$\pi_v = \chi_1 \lvert \cdot \rvert^{s_1} \times \cdots \times \chi_n \lvert \cdot \rvert^{s_n} \rtimes 1\,.$$
By the endoscopic classification theory of Arthur \cite{A}, if $\pi$ has a non-generic global Arthur parameter, then $\pi_v$ is non-generic for almost all finite places.
Therefore, $\pi$ must have a generic global Arthur parameter $\phi_{\pi}$.
This proves the first statement.

The central character of the parameter $\phi_{\pi}$ is determined by the form of the group $G_{n+1,\beta}$, and hence is $\eta_{V_{0,\beta}}$.
From the discussion right before Proposition \ref{FCfordescent}, the pair $(\pi,\sigma)$ has a non-zero Bessel period, and hence by 
Theorem 5.7 of \cite{JZ15}, one has that 
$L(\frac{1}{2}, \phi_{\pi}\times\phi_{\tau_0})\neq 0$.
\end{proof}

To end this section, we state a theorem on the reciprocal branching problem, which completes the proof of Theorem \ref{main thm for descent}.

\begin{thm}\label{main thm for reciprocal problem}
Let $\tau=\tau_1 \boxplus \cdots \boxplus \tau_r$ be an irreducible isobaric sum automorphic representation of $\GL_{2n}(\BA)$ with trivial central character, such that each $\tau_i$ is a unitary irreducible cuspidal representation of $\GL_{n_i}(\BA)$ of orthogonal type.
Let $V_0$ be a quadratic space of dimension $3$ over $F$, and $\sigma$ be an irreducible cuspidal automorphic representation of $\SO(V_0)(\BA)$ which lies in the global Vogan packet $\wt\Pi_{\phi_{\tau_0}}[H_1^*]$ (here $H_1^*=\SO_3^*$, $F$-split) for some irreducible cuspidal automorphic representation $\tau_0$ of $\GL_2(\BA)$ of symplectic type.
Assume that
\begin{itemize}
\item[(i)] $L(\frac{1}{2},\tau\times\tau_0)\neq 0$,
\item[(ii)] the representation $\sigma$ satisfies Assumption \ref{depth assumption}.
\end{itemize}
Then there exists a $\beta\in F^\times$, such that the twisted descent $\pi_\beta=\CD_{\psi_{n,\beta}}(\CE_{\tau\otimes\sigma})$
has all of its irreducible summands $\pi_\beta^{(i)}$, as cuspidal automorphic representations of $G_{n+1,\beta}(\BA)$, enjoying the property that
each $(\pi_\beta^{(i)},\sigma)$ has a non-trivial Bessel period.
Moreover, if the $\beta\in F^\times$ taken in Theorem \ref{nonvanishing} is not a square, then each $\pi_\beta^{(i)}$ has a generic global Arthur parameter $\phi^{(i)}$, and gives an answer to the reciprocal branching problem with respect to $\sigma\in \wt\Pi_{\phi_{\tau_0}}[H_1^*]$.
\end{thm}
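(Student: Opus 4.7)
The plan is to assemble the ingredients already established in \S\ref{residual rep}--\S\ref{non-vanishing} and to verify that the two hypotheses of Theorem \ref{main thm for reciprocal problem} translate into the hypotheses of those ingredients. Since $\sigma$ lies in the global Vogan packet $\wt\Pi_{\phi_{\tau_0}}[H_1^*]$, its functorial transfer to $\GL_2(\BA)$ is exactly $\tau_0$, so $L(s,\tau\times\sigma)=L(s,\tau\times\tau_0)$, and hypothesis (i) gives the non-vanishing $L(\tfrac{1}{2},\tau\times\sigma)\neq 0$. Combined with the fact that each $\tau_i$ is of orthogonal type, this supplies the residual representation $\CE_{\tau\otimes\sigma}$ of \S\ref{subsection: residual rep} as an irreducible square-integrable automorphic representation of $\SO^{V_0}_{4n+3}(\BA)$, on which the twisted descent operates.

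Next I would invoke Theorem \ref{nonvanishing} to obtain some $\beta\in F^\times$ such that $\pi_\beta=\CD_{\psi_{n,\beta}}(\CE_{\tau\otimes\sigma})$ is non-zero. Under hypothesis (ii), namely Assumption \ref{depth assumption}, Proposition \ref{prop for cuspidality} ensures that $\pi_\beta$ is cuspidal on $G_{n+1,\beta}(\BA)$, and the uniqueness of local Bessel models (\cite{AGRS, GGP12, JSZ12, SZ}) forces any decomposition $\pi_\beta=\bigoplus_i \pi_\beta^{(i)}$ into irreducibles to be multiplicity free. The non-vanishing of the Bessel period of each pair $(\pi_\beta^{(i)},\sigma)$ is then automatic from the very construction of the descent: as in \eqref{zeta integral}, a non-zero inner product of $\pi_\beta^{(i)}$ against the residue $\Res_{s=1/2}E^{\psi_{n,\beta}}(s,\cdot,\phi_{\tau\otimes\sigma})$ is, via Corollary 4.4 of \cite{JZ15}, exactly such a Bessel period.

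For the second assertion, I would add the extra hypothesis that the $\beta$ produced by Theorem \ref{nonvanishing} is not a square, and recall $\omega_\tau=1$. In this regime Proposition \ref{prop for parameter} applies: at the infinitely many finite places $v$ where $\CE_{\tau\otimes\sigma,v}$ and $\psi_v$ are unramified and $\beta\notin (F_v^\times)^2$, the local Jacquet module computation of Proposition \ref{prop for unramified constituents I.1}(1) pins down $\pi_{\beta,v}^{(i)}$ as a fully induced, generic, unramified representation of $G_{n+1,\beta}(F_v)$. The endoscopic classification of \cite{A} then excludes any non-generic global Arthur parameter for $\pi_\beta^{(i)}$, so the parameter $\phi^{(i)}$ is generic. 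Its central character is $\eta_{V_{0,\beta}}$ from the form of $G_{n+1,\beta}$, and the non-vanishing $L(\tfrac{1}{2},\phi^{(i)}\times\phi_{\tau_0})\neq 0$ follows from Theorem 5.7 of \cite{JZ15}, using the non-zero Bessel period already in hand. Together with the cuspidality and the Bessel period matching $\sigma$, this shows that each $\pi_\beta^{(i)}$ furnishes the desired answer to the reciprocal branching problem for $\sigma$.

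The main obstacle would have been either the existence of the descent (Theorem \ref{nonvanishing}) or its cuspidality (Proposition \ref{prop for cuspidality}), but both have already been settled in the preceding sections; the present argument is therefore essentially a matter of bookkeeping, reducing hypothesis (i) to the non-vanishing of $L(\tfrac{1}{2},\tau\times\sigma)$ and re-expressing hypothesis (ii) in the form required by the cuspidality and genericity propositions.
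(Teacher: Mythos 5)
Your proposal is correct and follows essentially the same route as the paper: the theorem is proved there precisely by assembling Theorem \ref{nonvanishing}, Proposition \ref{prop for cuspidality}, the uniqueness of local Bessel models for the multiplicity-free decomposition, the inner-product identity \eqref{zeta integral} together with Corollary 4.4 of \cite{JZ15} for the Bessel periods, and Proposition \ref{prop for parameter} for the generic parameter when $\beta$ is not a square. Your reduction of hypothesis (i) to $L(\tfrac{1}{2},\tau\times\sigma)\neq 0$, needed for the pole of the Eisenstein series defining $\CE_{\tau\otimes\sigma}$, is exactly the bookkeeping the paper performs in \S 2.2.
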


\begin{rmk}\label{rmk for global parameters}
A more precise description of those parameters $\phi^{(i)}$ can be deduced from the refined local theory of the global zeta integrals in \cite{JSdZ}, we will not discuss them here. 
\end{rmk}

\end{document}